\DeclarePairedDelimiter{\floor}{\lfloor}{\rfloor}
\providecommand{\N}{\mathbb{N}}
\providecommand{\R}{\mathbb{R}}
\providecommand{\T}{\mathbb{T}}
\providecommand{\Z}{\mathbb{Z}}
\providecommand{\Q}{\mathbb{Q}}
\providecommand{\F}{\mathbb{F}}
\renewcommand{\vec}[1]{\boldsymbol{#1}}
\newcommand{\paren}[1]{\left( #1 \right)}
\newcommand{\brac}[1]{\left[ #1 \right]}
\newcommand{\abs}[1]{\left\vert#1\right\vert}
\newcommand{\set}[1]{\left\{#1\right\}}
\newcommand{\bhexagon}{\mathord{\raisebox{0.0pt}{\tikz{\node[draw,scale=.55,regular polygon, regular polygon sides=6,fill=black](){};}}}}
\DeclareMathOperator{\rank}{rank}
\DeclareMathOperator{\im}{im}
\newtheorem{Theorem}{Theorem}
\newtheorem{Lemma}[Theorem]{Lemma}
\newtheorem{Definition}[Theorem]{Definition}
\newtheorem{Proposition}[Theorem]{Proposition}
\newtheorem{Corollary}[Theorem]{Corollary}
\begin{document}

\title[Homological Percolation on a torus]{Homological percolation on a torus: plaquettes and permutohedra}

\author{Paul Duncan}
\email{paul.duncan@mail.huji.ac.il}
\thanks{P.D.\ gratefully acknowledges the support of NSF-DMS \#1547357.}
\address{Einstein Institute of Mathematics, Hebrew University of Jerusalem, Jerusalem 91904, Israel}
\author{Matthew Kahle}
\email{mkahle@math.osu.edu}
\address{Department of Mathematics, Ohio State University, Columbus, OH 43210}
\thanks{M.K.\ gratefully acknowledges the support of NSF-DMS \#2005630 and NSF-CCF \#1839358.}
\author{Benjamin Schweinhart}
\email{bschwei@gmu.edu}
\address{Department of Mathematical Sciences, George Mason University, Fairfax, VA 22030}
\thanks{B.S.\ was was supported in part by a NSF Mathematical Sciences Postdoctoral Research Fellowship
under award number NSF-DMS \#1606259.}

\begin{abstract}
We study higher-dimensional homological analogues of bond percolation on a square lattice and site percolation on a triangular lattice.

By taking a quotient of certain infinite cell complexes by growing sublattices, we obtain finite cell complexes with a high degree of symmetry and with the topology of the torus $\T^d$. When random subcomplexes induce nontrivial $i$-dimensional cycles in the homology of the ambient torus, we call such cycles \emph{giant}. We show that for every $i$ and $d$ there is a sharp transition from nonexistence of giant cycles to giant cycles spanning the homology of the torus.

We also prove convergence of the threshold function to a constant in certain cases. In particular, we prove that $p_c=1/2$ in the case of middle dimension $i=d/2$ for both models. This gives finite-volume high-dimensional analogues of Kesten's theorems that $p_c=1/2$ for bond percolation on a square lattice and site percolation on a triangular lattice.

\end{abstract}

\maketitle

\section{Introduction}

Various models of percolation are fundamental in statistical mechanics; classically, they study the emergence of a giant component in random structures. From early in the mathematical study of percolation, geometry and topology have been at the heart of the subject. Indeed, Frisch and Hammersley wrote in 1963~\cite{FH63} that, ``Nearly all extant percolation theory deals with regular interconnecting structures, for lack of knowledge of how to define randomly irregular structures. Adventurous readers may care to rectify this deficiency by pioneering branches of mathematics that might be called \emph{stochastic geometry} or \emph{statistical topology}.'' 

The main geometric structure of interest in percolation theory thus far is often the existence of an infinite component or infinite path. A path which wraps around the torus can be seen as a finite volume analogue of this infinite component. We study events defined in terms of geometric structures that are higher-dimensional generalizations  of such paths. The ``homological percolation'' property we consider is one that was recently studied by Bobrowski and Skraba~\cite{bobrowski2020homological,bobrowski2020homological2}. The possibility of studying homological percolation in a $2$-dimensional torus or surface of genus $g$ was discussed earlier in~\cite{LPSA94}, and~\cite{Pinson94},~\cite{MDSA09}. The more general setting of $i$-dimensional homological percolation in a $d$-dimensional torus was, to our knowledge, first carefully studied in~\cite{bobrowski2020homological} and~\cite{bobrowski2020homological2}. The setup is as follows.

\begin{figure}[t]
\centering

\begin{tikzpicture}[line width=.25mm,scale=0.4] 

\draw[dashed, line width=.1mm] (0,0)--(10,0)--(10,10)--(0,10)--cycle;

\draw[blue] ( 1/4 , 1/4 )--( 5/4 , 1/4 );
\draw[blue] ( 1/4 , 1/4 )--( 0 , 1/4 );
\draw[blue] ( 37/4 , 1/4 )--( 10 , 1/4 );
\draw[blue] ( 1/4 , 5/4 )--( 0 , 5/4 );
\draw[blue] ( 37/4 , 5/4 )--( 10 , 5/4 );
\draw[blue] ( 1/4 , 9/4 )--( 5/4 , 9/4 );
\draw[blue] ( 1/4 , 13/4 )--( 1/4 , 17/4 );
\draw[blue] ( 1/4 , 13/4 )--( 5/4 , 13/4 );
\draw[blue] ( 1/4 , 13/4 )--( 0 , 13/4 );
\draw[blue] ( 37/4 , 13/4 )--( 10 , 13/4 );
\draw[blue] ( 1/4 , 17/4 )--( 5/4 , 17/4 );
\draw[blue] ( 1/4 , 17/4 )--( 0 , 17/4 );
\draw[blue] ( 37/4 , 17/4 )--( 10 , 17/4 );
\draw[blue] ( 1/4 , 21/4 )--( 1/4 , 25/4 );
\draw[blue] ( 1/4 , 21/4 )--( 5/4 , 21/4 );
\draw[blue] ( 1/4 , 25/4 )--( 5/4 , 25/4 );
\draw[blue] ( 1/4 , 25/4 )--( 0 , 25/4 );
\draw[blue] ( 37/4 , 25/4 )--( 10 , 25/4 );
\draw[blue] ( 1/4 , 29/4 )--( 1/4 , 33/4 );
\draw[blue] ( 1/4 , 29/4 )--( 5/4 , 29/4 );
\draw[blue] ( 1/4 , 37/4 )--( 5/4 , 37/4 );
\draw[blue] ( 1/4 , 37/4 )--( 0 , 37/4 );
\draw[blue] ( 37/4 , 37/4 )--( 10 , 37/4 );
\draw[blue] ( 5/4 , 1/4 )--( 5/4 , 5/4 );
\draw[blue] ( 5/4 , 1/4 )--( 5/4 , 0 );
\draw[blue] ( 5/4 , 37/4 )--( 5/4 , 10 );
\draw[blue] ( 5/4 , 1/4 )--( 9/4 , 1/4 );
\draw[blue] ( 5/4 , 5/4 )--( 9/4 , 5/4 );
\draw[blue] ( 5/4 , 9/4 )--( 5/4 , 13/4 );
\draw[blue] ( 5/4 , 9/4 )--( 9/4 , 9/4 );
\draw[blue] ( 5/4 , 13/4 )--( 9/4 , 13/4 );
\draw[blue] ( 5/4 , 17/4 )--( 5/4 , 21/4 );
\draw[blue] ( 5/4 , 21/4 )--( 9/4 , 21/4 );
\draw[blue] ( 5/4 , 25/4 )--( 5/4 , 29/4 );
\draw[blue] ( 5/4 , 25/4 )--( 9/4 , 25/4 );
\draw[blue] ( 5/4 , 29/4 )--( 5/4 , 33/4 );
\draw[blue] ( 5/4 , 33/4 )--( 5/4 , 37/4 );
\draw[blue] ( 5/4 , 33/4 )--( 9/4 , 33/4 );
\draw[blue] ( 5/4 , 37/4 )--( 9/4 , 37/4 );
\draw[blue] ( 9/4 , 1/4 )--( 9/4 , 5/4 );
\draw[blue] ( 9/4 , 1/4 )--( 13/4 , 1/4 );
\draw[blue] ( 9/4 , 5/4 )--( 9/4 , 9/4 );
\draw[blue] ( 9/4 , 9/4 )--( 9/4 , 13/4 );
\draw[blue] ( 9/4 , 9/4 )--( 13/4 , 9/4 );
\draw[blue] ( 9/4 , 13/4 )--( 13/4 , 13/4 );
\draw[blue] ( 9/4 , 21/4 )--( 9/4 , 25/4 );
\draw[blue] ( 9/4 , 29/4 )--( 9/4 , 33/4 );
\draw[blue] ( 9/4 , 29/4 )--( 13/4 , 29/4 );
\draw[blue] ( 9/4 , 33/4 )--( 13/4 , 33/4 );
\draw[blue] ( 9/4 , 37/4 )--( 13/4 , 37/4 );
\draw[blue] ( 13/4 , 1/4 )--( 13/4 , 0 );
\draw[blue] ( 13/4 , 37/4 )--( 13/4 , 10 );
\draw[blue] ( 13/4 , 1/4 )--( 17/4 , 1/4 );
\draw[blue] ( 13/4 , 5/4 )--( 17/4 , 5/4 );
\draw[blue] ( 13/4 , 9/4 )--( 17/4 , 9/4 );
\draw[blue] ( 13/4 , 13/4 )--( 13/4 , 17/4 );
\draw[blue] ( 13/4 , 13/4 )--( 17/4 , 13/4 );
\draw[blue] ( 13/4 , 17/4 )--( 13/4 , 21/4 );
\draw[blue] ( 13/4 , 17/4 )--( 17/4 , 17/4 );
\draw[blue] ( 13/4 , 21/4 )--( 13/4 , 25/4 );
\draw[blue] ( 13/4 , 37/4 )--( 17/4 , 37/4 );
\draw[blue] ( 17/4 , 1/4 )--( 17/4 , 5/4 );
\draw[blue] ( 17/4 , 1/4 )--( 17/4 , 0 );
\draw[blue] ( 17/4 , 37/4 )--( 17/4 , 10 );
\draw[blue] ( 17/4 , 9/4 )--( 17/4 , 13/4 );
\draw[blue] ( 17/4 , 17/4 )--( 17/4 , 21/4 );
\draw[blue] ( 17/4 , 17/4 )--( 21/4 , 17/4 );
\draw[blue] ( 17/4 , 29/4 )--( 17/4 , 33/4 );
\draw[blue] ( 17/4 , 33/4 )--( 21/4 , 33/4 );
\draw[blue] ( 17/4 , 37/4 )--( 21/4 , 37/4 );
\draw[blue] ( 21/4 , 1/4 )--( 21/4 , 0 );
\draw[blue] ( 21/4 , 37/4 )--( 21/4 , 10 );
\draw[blue] ( 21/4 , 5/4 )--( 21/4 , 9/4 );
\draw[blue] ( 21/4 , 9/4 )--( 25/4 , 9/4 );
\draw[blue] ( 21/4 , 13/4 )--( 21/4 , 17/4 );
\draw[blue] ( 21/4 , 17/4 )--( 21/4 , 21/4 );
\draw[blue] ( 21/4 , 25/4 )--( 21/4 , 29/4 );
\draw[blue] ( 21/4 , 25/4 )--( 25/4 , 25/4 );
\draw[blue] ( 21/4 , 29/4 )--( 21/4 , 33/4 );
\draw[blue] ( 21/4 , 29/4 )--( 25/4 , 29/4 );
\draw[blue] ( 21/4 , 33/4 )--( 21/4 , 37/4 );
\draw[blue] ( 21/4 , 33/4 )--( 25/4 , 33/4 );
\draw[blue] ( 25/4 , 1/4 )--( 29/4 , 1/4 );
\draw[blue] ( 25/4 , 5/4 )--( 29/4 , 5/4 );
\draw[blue] ( 25/4 , 9/4 )--( 25/4 , 13/4 );
\draw[blue] ( 25/4 , 13/4 )--( 25/4 , 17/4 );
\draw[blue] ( 25/4 , 13/4 )--( 29/4 , 13/4 );
\draw[blue] ( 25/4 , 17/4 )--( 25/4 , 21/4 );
\draw[blue] ( 25/4 , 17/4 )--( 29/4 , 17/4 );
\draw[blue] ( 25/4 , 21/4 )--( 25/4 , 25/4 );
\draw[blue] ( 25/4 , 33/4 )--( 25/4 , 37/4 );
\draw[blue] ( 29/4 , 1/4 )--( 29/4 , 5/4 );
\draw[blue] ( 29/4 , 1/4 )--( 29/4 , 0 );
\draw[blue] ( 29/4 , 37/4 )--( 29/4 , 10 );
\draw[blue] ( 29/4 , 1/4 )--( 33/4 , 1/4 );
\draw[blue] ( 29/4 , 5/4 )--( 33/4 , 5/4 );
\draw[blue] ( 29/4 , 9/4 )--( 29/4 , 13/4 );
\draw[blue] ( 29/4 , 13/4 )--( 33/4 , 13/4 );
\draw[blue] ( 29/4 , 17/4 )--( 29/4 , 21/4 );
\draw[blue] ( 29/4 , 21/4 )--( 29/4 , 25/4 );
\draw[blue] ( 29/4 , 25/4 )--( 33/4 , 25/4 );
\draw[blue] ( 29/4 , 29/4 )--( 29/4 , 33/4 );
\draw[blue] ( 29/4 , 29/4 )--( 33/4 , 29/4 );
\draw[blue] ( 29/4 , 33/4 )--( 33/4 , 33/4 );
\draw[blue] ( 29/4 , 37/4 )--( 33/4 , 37/4 );
\draw[blue] ( 33/4 , 5/4 )--( 37/4 , 5/4 );
\draw[blue] ( 33/4 , 9/4 )--( 37/4 , 9/4 );
\draw[blue] ( 33/4 , 17/4 )--( 33/4 , 21/4 );
\draw[blue] ( 33/4 , 17/4 )--( 37/4 , 17/4 );
\draw[blue] ( 33/4 , 21/4 )--( 33/4 , 25/4 );
\draw[blue] ( 33/4 , 25/4 )--( 33/4 , 29/4 );
\draw[blue] ( 33/4 , 25/4 )--( 37/4 , 25/4 );
\draw[blue] ( 33/4 , 29/4 )--( 33/4 , 33/4 );
\draw[blue] ( 33/4 , 33/4 )--( 37/4 , 33/4 );
\draw[blue] ( 37/4 , 1/4 )--( 37/4 , 5/4 );
\draw[blue] ( 37/4 , 1/4 )--( 37/4 , 0 );
\draw[blue] ( 37/4 , 37/4 )--( 37/4 , 10 );
\draw[blue] ( 37/4 , 5/4 )--( 37/4 , 9/4 );
\draw[blue] ( 37/4 , 9/4 )--( 37/4 , 13/4 );
\draw[blue] ( 37/4 , 17/4 )--( 37/4 , 21/4 );
\draw[blue] ( 37/4 , 29/4 )--( 37/4 , 33/4 );

\draw[blue,line width=.75mm] (0,6.25)--++(1.25,0)--++(0,3)--++(4,0)--++(0,-3)--++(1,0)--++(0,-2)--++(1,0)--++(0,2)--++(2.75,0);

\foreach \i in {0,...,9}
{
\foreach \j in {0,...,9}
{
\filldraw[blue](\i+1/4,\j+1/4) circle (0.08);
}
}
\end{tikzpicture}
\hspace{0.2in}
\begin{tikzpicture}[line width=.25mm,scale=0.4] 
\draw[dashed, line width=.1mm] (0,0)--(10,0)--(10,10)--(0,10)--cycle;

\draw[blue] ( 1/4 , 1/4 )--( 5/4 , 1/4 );
\draw[blue] ( 1/4 , 1/4 )--( 0 , 1/4 );
\draw[blue] ( 37/4 , 1/4 )--( 10 , 1/4 );
\draw[blue] ( 1/4 , 5/4 )--( 0 , 5/4 );
\draw[blue] ( 37/4 , 5/4 )--( 10 , 5/4 );
\draw[blue] ( 1/4 , 9/4 )--( 5/4 , 9/4 );
\draw[blue] ( 1/4 , 13/4 )--( 1/4 , 17/4 );
\draw[blue] ( 1/4 , 13/4 )--( 5/4 , 13/4 );
\draw[blue] ( 1/4 , 13/4 )--( 0 , 13/4 );
\draw[blue] ( 37/4 , 13/4 )--( 10 , 13/4 );
\draw[blue] ( 1/4 , 17/4 )--( 5/4 , 17/4 );
\draw[blue] ( 1/4 , 17/4 )--( 0 , 17/4 );
\draw[blue] ( 37/4 , 17/4 )--( 10 , 17/4 );
\draw[blue] ( 1/4 , 21/4 )--( 1/4 , 25/4 );
\draw[blue] ( 1/4 , 21/4 )--( 5/4 , 21/4 );
\draw[blue] ( 1/4 , 25/4 )--( 5/4 , 25/4 );
\draw[blue] ( 1/4 , 25/4 )--( 0 , 25/4 );
\draw[blue] ( 37/4 , 25/4 )--( 10 , 25/4 );
\draw[blue] ( 1/4 , 29/4 )--( 1/4 , 33/4 );
\draw[blue] ( 1/4 , 29/4 )--( 5/4 , 29/4 );
\draw[blue] ( 1/4 , 37/4 )--( 5/4 , 37/4 );
\draw[blue] ( 1/4 , 37/4 )--( 0 , 37/4 );
\draw[blue] ( 37/4 , 37/4 )--( 10 , 37/4 );
\draw[blue] ( 5/4 , 1/4 )--( 5/4 , 5/4 );
\draw[blue] ( 5/4 , 1/4 )--( 5/4 , 0 );
\draw[blue] ( 5/4 , 37/4 )--( 5/4 , 10 );
\draw[blue] ( 5/4 , 1/4 )--( 9/4 , 1/4 );
\draw[blue] ( 5/4 , 5/4 )--( 9/4 , 5/4 );
\draw[blue] ( 5/4 , 9/4 )--( 5/4 , 13/4 );
\draw[blue] ( 5/4 , 9/4 )--( 9/4 , 9/4 );
\draw[blue] ( 5/4 , 13/4 )--( 9/4 , 13/4 );
\draw[blue] ( 5/4 , 17/4 )--( 5/4 , 21/4 );
\draw[blue] ( 5/4 , 21/4 )--( 9/4 , 21/4 );
\draw[blue] ( 5/4 , 25/4 )--( 5/4 , 29/4 );
\draw[blue] ( 5/4 , 25/4 )--( 9/4 , 25/4 );
\draw[blue] ( 5/4 , 29/4 )--( 5/4 , 33/4 );
\draw[blue] ( 5/4 , 33/4 )--( 5/4 , 37/4 );
\draw[blue] ( 5/4 , 33/4 )--( 9/4 , 33/4 );
\draw[blue] ( 5/4 , 37/4 )--( 9/4 , 37/4 );
\draw[blue] ( 9/4 , 1/4 )--( 9/4 , 5/4 );
\draw[blue] ( 9/4 , 1/4 )--( 13/4 , 1/4 );
\draw[blue] ( 9/4 , 5/4 )--( 9/4 , 9/4 );
\draw[blue] ( 9/4 , 9/4 )--( 9/4 , 13/4 );
\draw[blue] ( 9/4 , 9/4 )--( 13/4 , 9/4 );
\draw[blue] ( 9/4 , 13/4 )--( 13/4 , 13/4 );
\draw[blue] ( 9/4 , 21/4 )--( 9/4 , 25/4 );
\draw[blue] ( 9/4 , 29/4 )--( 9/4 , 33/4 );
\draw[blue] ( 9/4 , 29/4 )--( 13/4 , 29/4 );
\draw[blue] ( 9/4 , 33/4 )--( 13/4 , 33/4 );
\draw[blue] ( 9/4 , 37/4 )--( 13/4 , 37/4 );
\draw[blue] ( 13/4 , 1/4 )--( 13/4 , 0 );
\draw[blue] ( 13/4 , 37/4 )--( 13/4 , 10 );
\draw[blue] ( 13/4 , 1/4 )--( 17/4 , 1/4 );
\draw[blue] ( 13/4 , 5/4 )--( 17/4 , 5/4 );
\draw[blue] ( 13/4 , 9/4 )--( 17/4 , 9/4 );
\draw[blue] ( 13/4 , 13/4 )--( 13/4 , 17/4 );
\draw[blue] ( 13/4 , 13/4 )--( 17/4 , 13/4 );
\draw[blue] ( 13/4 , 17/4 )--( 13/4 , 21/4 );
\draw[blue] ( 13/4 , 17/4 )--( 17/4 , 17/4 );
\draw[blue] ( 13/4 , 21/4 )--( 13/4 , 25/4 );
\draw[blue] ( 13/4 , 37/4 )--( 17/4 , 37/4 );
\draw[blue] ( 17/4 , 1/4 )--( 17/4 , 5/4 );
\draw[blue] ( 17/4 , 1/4 )--( 17/4 , 0 );
\draw[blue] ( 17/4 , 37/4 )--( 17/4 , 10 );
\draw[blue] ( 17/4 , 9/4 )--( 17/4 , 13/4 );
\draw[blue] ( 17/4 , 17/4 )--( 17/4 , 21/4 );
\draw[blue] ( 17/4 , 17/4 )--( 21/4 , 17/4 );
\draw[blue] ( 17/4 , 29/4 )--( 17/4 , 33/4 );
\draw[blue] ( 17/4 , 33/4 )--( 21/4 , 33/4 );
\draw[blue] ( 17/4 , 37/4 )--( 21/4 , 37/4 );
\draw[blue] ( 21/4 , 1/4 )--( 21/4 , 0 );
\draw[blue] ( 21/4 , 37/4 )--( 21/4 , 10 );
\draw[blue] ( 21/4 , 5/4 )--( 21/4 , 9/4 );
\draw[blue] ( 21/4 , 9/4 )--( 25/4 , 9/4 );
\draw[blue] ( 21/4 , 13/4 )--( 21/4 , 17/4 );
\draw[blue] ( 21/4 , 17/4 )--( 21/4 , 21/4 );
\draw[blue] ( 21/4 , 25/4 )--( 21/4 , 29/4 );
\draw[blue] ( 21/4 , 25/4 )--( 25/4 , 25/4 );
\draw[blue] ( 21/4 , 29/4 )--( 21/4 , 33/4 );
\draw[blue] ( 21/4 , 29/4 )--( 25/4 , 29/4 );
\draw[blue] ( 21/4 , 33/4 )--( 21/4 , 37/4 );
\draw[blue] ( 21/4 , 33/4 )--( 25/4 , 33/4 );
\draw[blue] ( 25/4 , 1/4 )--( 29/4 , 1/4 );
\draw[blue] ( 25/4 , 5/4 )--( 29/4 , 5/4 );
\draw[blue] ( 25/4 , 9/4 )--( 25/4 , 13/4 );
\draw[blue] ( 25/4 , 13/4 )--( 25/4 , 17/4 );
\draw[blue] ( 25/4 , 13/4 )--( 29/4 , 13/4 );
\draw[blue] ( 25/4 , 17/4 )--( 25/4 , 21/4 );
\draw[blue] ( 25/4 , 17/4 )--( 29/4 , 17/4 );
\draw[blue] ( 25/4 , 21/4 )--( 25/4 , 25/4 );
\draw[blue] ( 25/4 , 33/4 )--( 25/4 , 37/4 );
\draw[blue] ( 29/4 , 1/4 )--( 29/4 , 5/4 );
\draw[blue] ( 29/4 , 1/4 )--( 29/4 , 0 );
\draw[blue] ( 29/4 , 37/4 )--( 29/4 , 10 );
\draw[blue] ( 29/4 , 1/4 )--( 33/4 , 1/4 );
\draw[blue] ( 29/4 , 5/4 )--( 33/4 , 5/4 );
\draw[blue] ( 29/4 , 9/4 )--( 29/4 , 13/4 );
\draw[blue] ( 29/4 , 13/4 )--( 33/4 , 13/4 );
\draw[blue] ( 29/4 , 17/4 )--( 29/4 , 21/4 );
\draw[blue] ( 29/4 , 21/4 )--( 29/4 , 25/4 );
\draw[blue] ( 29/4 , 25/4 )--( 33/4 , 25/4 );
\draw[blue] ( 29/4 , 29/4 )--( 29/4 , 33/4 );
\draw[blue] ( 29/4 , 29/4 )--( 33/4 , 29/4 );
\draw[blue] ( 29/4 , 33/4 )--( 33/4 , 33/4 );
\draw[blue] ( 29/4 , 37/4 )--( 33/4 , 37/4 );
\draw[blue] ( 33/4 , 5/4 )--( 37/4 , 5/4 );
\draw[blue] ( 33/4 , 9/4 )--( 37/4 , 9/4 );
\draw[blue] ( 33/4 , 17/4 )--( 33/4 , 21/4 );
\draw[blue] ( 33/4 , 17/4 )--( 37/4 , 17/4 );
\draw[blue] ( 33/4 , 21/4 )--( 33/4 , 25/4 );
\draw[blue] ( 33/4 , 25/4 )--( 33/4 , 29/4 );
\draw[blue] ( 33/4 , 25/4 )--( 37/4 , 25/4 );
\draw[blue] ( 33/4 , 29/4 )--( 33/4 , 33/4 );
\draw[blue] ( 33/4 , 33/4 )--( 37/4 , 33/4 );
\draw[blue] ( 37/4 , 1/4 )--( 37/4 , 5/4 );
\draw[blue] ( 37/4 , 1/4 )--( 37/4 , 0 );
\draw[blue] ( 37/4 , 37/4 )--( 37/4 , 10 );
\draw[blue] ( 37/4 , 5/4 )--( 37/4 , 9/4 );
\draw[blue] ( 37/4 , 9/4 )--( 37/4 , 13/4 );
\draw[blue] ( 37/4 , 17/4 )--( 37/4 , 21/4 );
\draw[blue] ( 37/4 , 29/4 )--( 37/4 , 33/4 );

\draw[blue,line width=.75mm] (1.25,0)--++(0,1.25)--++(1,0)--++(0,2)--++(-2,0)--++(0,1)--++(1,0)--++(0,1)--++(1,0)--++(0,1)--++(-1,0)--++(0,3.75);

\foreach \i in {0,...,9}
{
\foreach \j in {0,...,9}
{
\filldraw[blue](\i+1/4,\j+1/4) circle (0.08);
}
}

\end{tikzpicture}
\caption{\label{fig:giant1}Two giant cycles for a random system of $1$-dimensional plaquettes (bonds) on a $2$-dimensional torus, shown in a square with opposite sides identified.}

    \includegraphics[width=.7\textwidth]{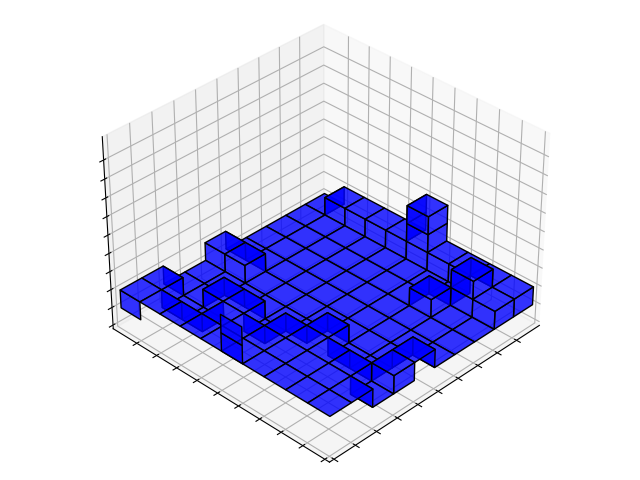}
    \caption{\label{fig:sheet}A giant cycle for $2$-dimensional plaquette percolation on a $3$-dimensional torus, shown in a cube with opposite sides identified.}
\end{figure}

For a random shape $S$ in the torus $\T^d$, let $\phi: S \hookrightarrow \T^d$ denote the natural inclusion map. Bobrowski and Skraba suggest that nontrivial elements in the image of the induced map $\phi_*:H_i\paren{S;\mathbb{Q}}\rightarrow H_i\paren{\T^d;\mathbb{Q}}$ can be considered ``giant $i$-dimensional cycles''. In the case  $i=1,$ these correspond to paths that loop around the torus as in Figure \ref{fig:giant1} whereas for the case $i=2$ and $d=3$ they are ``sheets'' going from one side of the torus to the other, as illustrated in Figure~\ref{fig:sheet}.

In~\cite{bobrowski2020homological}, Bobrowski and Skraba provide experimental evidence that the appearance of giant cycles is closely correlated with the zeroes of the expected Euler characteristic curve, and moreover that this behavior seems universal across several models. In~\cite{bobrowski2020homological2} they focus on continuum percolation and show that giant cycles appear in all dimensions within the so-called thermodynamic limit where $nr^d$ is bounded. One interesting suggestion is that there is a sharp, convergent transition from the map $\phi_*$ being trivial to being surjective. They prove that such a transition occurs when $i=1$, for every $d \ge 2$.

The Harris--Kesten theorem~\cite{harris1960lower,Kesten80} (see also~\cite{bollobas2006short} for a short proof) establishes that for bond percolation in the square lattice, the critical probability for an infinite component to appear is $p=1/2$. Kesten also showed that $p_c = 1/ 2$ for site percolation in the triangular lattice, using similar tools~\cite{Kesten-book}.

We seek higher-dimensional homological analogues of these classical theorems. We take a step in this direction by proving finitary versions on certain flat tori with a high degree of symmetry. We will discuss first an analogue of bond percolation on a square lattice, and then an analogue of site percolation on a triangular lattice.

A natural analogue of bond percolation in higher dimensions is given by \emph{plaquette percolation}, which was first studied by Aizenman, Chayes, Chayes, Fr\"{o}hlich, and Russo in~\cite{ACCFR83}. They were motivated by notions of random surfaces in physics coming from lattice gauge theories and three-dimensional spin systems---see Sections 1 and 7 of that paper. The plaquette model starts with the entire 1-skeleton of $\Z^3$ and adds 2-dimensional square cells, or plaquettes, with probability $p$ independently. The authors prove that the probability that a large planar loop is null-homologous undergoes a phase transition from an ``area law'' to a ``perimeter law'' that is dual to the phase transition for bond percolation in $\Z^3.$ In particular, the critical probability for this threshold is at $p_c = 1-\hat{p}_c$, where $\hat{p}_c$ is the threshold for bond percolation (this follows when their results are combined with those of~\cite{grimmett1990supercritical}). They also show that infinite ``surface sheets'' appear at this threshold. At the end of their paper, the authors defined $i$-dimensional plaquette percolation on $\Z^d$ and suggested the study of analogous questions in higher dimensions.
\begin{quote}
Of particular interest are random $(d/2)$-cells in even dimension $d.$ Clearly, if no
intermediate phase exists in such a self-dual system, the transition point is $p = 1/2.$
The most promising model for future study is random plaquettes in $d = 4.$~\cite{ACCFR83}
\end{quote}

One of the models we study is the following \emph{$i$-dimensional plaquette percolation model} $P$ on the $d$-dimensional torus. Let $N \ge 1$ and consider the cubical complex $T^d_N = \Z^d / (N\Z)^d.$ Take the entire $(i-1)$-skeleton of $T^d_N$ and add each $i$-face independently with probability $p.$

We also study an analogue of site percolation in a triangular lattice. In general, site percolation is defined as a random induced subgraph of a graph where each vertex is open with probability $p$, independently. For the special case of site percolation on a triangular lattice, it is often convenient to consider a ``dual'' model where one tiles the plane by regular hexagons, and every closed hexagon is included independently with probability $p$. As observed by Bobrowski and Skraba in~\cite{bobrowski2020homological}, this dual model generalizes naturally to higher dimensions---indeed, one can tile Euclidean space $\R^d$ by regular permutohedra, and then include each closed permutohedron independently with probability $p.$ More details, including a topological justification for referring to this as site percolation, are given below.

\subsection{Main results}

Our first set of results is in the case of plaquette percolation on the torus. Let $d,N\in\N,$ $d>1,$ $1\leq i \leq d-1$ and $p\in\paren{0,1}.$ As above, denote by  $T^d_N = \Z^d / (N\Z)^d$ the regular cubical complex on the $d$-dimensional torus with $N^d$ cubes of width one. Let $H_i\paren{X}$ be homology with coefficients in a fixed field $\F.$ A brief review of homology can be found in Section~\ref{sec:homology}. The choice of $\F$ only matters in the arguments of Section~\ref{sec:surjective}, which is reflected in the minor restrictions on the characteristic of $\F$ (denoted $\mathrm{char}\paren{\F}$) stated in the main theorems. Roughly speaking, our proofs rely on three symmetries: the symmetry of the plaquette system with its dual, the symmetry of individual plaquettes (that is, the existence of a transitive group action on the set of plaquettes), and the symmetry properties of the homology groups of the torus. The hypotheses on the homology coefficient field are crucial for the third symmetry; they ensure that the homology groups of the torus are maximally symmetric in the sense that they are irreducible representations of the point symmetry group of the lattice. This ensures that there is a sharp transition from the appearance of the first non-zero giant cycle to the appearance of a basis of giant cycles. See the third paragraph of Section~\ref{sec:examples} for an example.

Define the plaquette system $P=P\paren{i,d,N,p}$ to be the random set obtained by taking the $(i-1)$-skeleton of $\T^d_N$ and adding each $i$-face independently with probability $p.$ Let $\phi:P \hookrightarrow \T^d_N$ be the inclusion, and let $\phi_*:H_{i}\paren{P}\rightarrow H_i\paren{\T^d_N}$ be the induced map on homology (defined in Section~\ref{sec:homology}). Also, denote by $A^{\square}=A^{\square}(i,d,N,p)$ the event that $\phi_*$ is nontrivial, and denote by $S^{\square}=S^{\square}(i,d,N,p)$ the event that $\phi_*$ is surjective (we will hexagon superscripts for the corresponding events for permutohedral site percolation). For example, in Figure \ref{fig:giant1} the two giant cycles shown are homologous with standard generators for $H_1(\T^2)$, so we have the event $S^{\square}$.

Our main result for plaquette percolation is that if $d=2i,$ then $i$-dimensional percolation is self-dual and undergoes a sharp transition at $p=1/2.$

\begin{Theorem}
\label{thm:half}
Suppose $\mathrm{char}\paren{\F} \neq 2.$ If $d=2i$ then 
\[\begin{cases}
\mathbb{P}_p\paren{A^{\square}}\rightarrow 0 & p<\frac{1}{2} \\
\mathbb{P}_p\paren{S^{\square}}\rightarrow 1 & p>\frac{1}{2} \\
\end{cases}\]
as $N\rightarrow \infty.$
\end{Theorem}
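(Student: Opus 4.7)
The plan is to combine the topological self-duality of the middle-dimensional plaquette model with a sharp-threshold argument. The half-shift $x \mapsto x + \paren{1/2, \ldots, 1/2}$ identifies each $i$-cell of $\T^d_N$ with a unique $(d-i)$-cell of a dual cubical torus $\paren{\T^d_N}^* \cong \T^d_N$; when $d=2i$ these are again $i$-cells. Define the dual plaquette system $P^*$ by taking the $(i-1)$-skeleton of $\paren{\T^d_N}^*$ together with every dual plaquette whose primal counterpart is not in $P$; then $P^*$ is distributed as an $i$-dimensional plaquette system at parameter $1-p$. Write $\psi : P^* \hookrightarrow \paren{\T^d_N}^*$ for its inclusion and $\psi_*$ for the induced map on $H_i$.

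The topological heart of the proof is the deterministic identity
\[
\rank\paren{\phi_*} + \rank\paren{\psi_*} = \binom{d}{i} = \dim H_i\paren{\T^d_N; F}.
\]
I would prove this by showing that $\im\paren{\psi_*}$ equals the annihilator of $\im\paren{\phi_*}$ under the Poincar\'e duality pairing on middle-dimensional homology of the torus. Concretely, run the long exact sequence of $\paren{\T^d_N, P}$ through excision and Poincar\'e--Lefschetz duality to identify $H_i\paren{\T^d_N, P; F}$ with the cohomology of $P^*$, and verify that the connecting map is compatible with the pairing. This is where the assumption $\mathrm{char}\paren{F} \neq 2$ enters, controlling signs and non-degeneracy in the middle-dimensional cup product pairing.

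The rank identity forces the deterministic event equality $S^{\square} = \paren{A^{\square,*}}^c$, where $A^{\square,*}$ is the nontriviality event for $P^*$. Combining this with the distributional self-duality above gives $\mathbb{P}_p\paren{S^{\square}} = 1 - \mathbb{P}_{1-p}\paren{A^{\square}}$; since $S^{\square} \subseteq A^{\square}$, setting $p=1/2$ yields the pointwise bounds $\mathbb{P}_{1/2}\paren{A^{\square}} \geq 1/2 \geq \mathbb{P}_{1/2}\paren{S^{\square}}$ for every $N$. Both $A^{\square}$ and $S^{\square}$ are monotone in $p$ and invariant under the automorphism group of $\T^d_N$, which acts transitively on the set of $i$-plaquettes. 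The Bourgain--Kalai--Friedgut sharp-threshold theorem therefore gives each event a critical window of width $o\paren{1}$. Together with the bounds above, this forces the threshold for $A^{\square}$ to lie at most $1/2 + o\paren{1}$ and that for $S^{\square}$ at least $1/2 - o\paren{1}$, which gives $\mathbb{P}_p\paren{A^{\square}} \to 0$ for $p < 1/2$ and $\mathbb{P}_p\paren{S^{\square}} \to 1$ for $p > 1/2$.

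The delicate step is the complementary rank identity. Because $P$ is not a manifold, Poincar\'e--Lefschetz duality must be applied relatively, and one must verify that $\im\paren{\phi_*}$ and $\im\paren{\psi_*}$ are \emph{exact} annihilators rather than merely satisfying the dimension inequality $\rank\paren{\phi_*} + \rank\paren{\psi_*} \geq \binom{d}{i}$; both inclusions are needed to pin $\mathbb{P}_{1/2}\paren{S^{\square}}$ from above. This requires a careful matching of cellular chain and cochain models on the primal and dual lattices, and is presumably the place where the characteristic restriction plays its substantive role. The remaining steps---distributional self-duality and the Friedgut-style sharp-threshold application---are then essentially formal.
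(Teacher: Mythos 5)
Your topological half --- the dual system, the rank identity $\rank\phi_*+\rank\psi_*=\binom{d}{i}$, the event identity $S^{\square}=\paren{A^{\blacksquare}}^c$, and the consequent bound $\mathbb{P}_{1/2}\paren{A^{\square}}\geq 1/2\geq\mathbb{P}_{1/2}\paren{S^{\square}}$ --- matches the paper's Duality Lemma (Lemma~\ref{lemma:duality}), although note that the rank identity requires no hypothesis on $\mathrm{char}\paren{F}$: over any field the Poincar\'e/Lefschetz pairing is nondegenerate, and the paper proves the lemma with no characteristic restriction. The genuine gap is in your final sharp-threshold step. Friedgut--Kalai only pushes an event's probability \emph{upward} in $p$: from your bounds at $p=1/2$ you can conclude $\mathbb{P}_p\paren{A^{\square}}\to 1$ for $p>1/2$ and, by the contrapositive, $\mathbb{P}_p\paren{S^{\square}}\to 0$ for $p<1/2$. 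These are the reverse of the two statements the theorem asserts, and they do not imply them: nothing you have shown excludes the scenario in which $A^{\square}$ already has probability bounded away from $0$ at some $p<1/2$ while $S^{\square}$ stays bounded away from $1$ at the dual value $1-p>1/2$; this is consistent with both bounds at $1/2$ and with the duality $\mathbb{P}_p\paren{S^{\square}}=1-\mathbb{P}_{1-p}\paren{A^{\square}}$, under which the two desired limits are equivalent to each other, so deducing one from the other is circular. Moreover, applying the sharp-threshold theorem to $S^{\square}$ near $1/2$ requires a lower bound on $\mathbb{P}_{1/2}\paren{S^{\square}}$ that is uniform in $N$, and you only have an upper bound; a priori $\mathbb{P}_{1/2}\paren{S^{\square}}$ could vanish as $N\to\infty$.

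The missing ingredient --- and the place where $\mathrm{char}\paren{F}\neq 2$ actually does its work --- is the paper's surjectivity boost, $\mathbb{P}_p\paren{S^{\square}}\geq C_0\,\mathbb{P}_p\paren{A^{\square}}^{C_1}$ with $C_0,C_1$ independent of $N$ (Corollary~\ref{cor:bothsurjective}). This is obtained from Lemma~\ref{lemma:spinning2} together with Proposition~\ref{prop:Zdirrep}: when $\mathrm{char}\paren{F}\neq 2$, $H_i\paren{\T^d_N;F}\cong\bigwedge^i F^d$ is an irreducible representation of the hyperoctahedral group $W_d$ (the proof symmetrizes a nonzero class by coordinate reflections, producing a factor $2^{d-i}$ that must be invertible), and Harris's inequality then lets one ``spin'' a single giant cycle into a spanning family of giant cycles at only polynomial cost in probability. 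With this in hand, $\mathbb{P}_{1/2}\paren{A^{\square}}\geq 1/2$ gives $\mathbb{P}_{1/2}\paren{S^{\square}}\geq C_0 2^{-C_1}>0$ uniformly in $N$; Theorem~\ref{thm:FK} applied to the increasing, symmetric event $S^{\square}$ yields $\mathbb{P}_p\paren{S^{\square}}\to 1$ for $p>1/2$; and the subcritical statement follows from $\mathbb{P}_p\paren{A^{\square}}=1-\mathbb{P}_{1-p}\paren{S^{\square}}$. Without some argument identifying the thresholds of $A^{\square}$ and $S^{\square}$, your proof establishes only the easy halves of the theorem.
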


Using results on bond percolation on $\Z^d$, we also prove dual sharp thresholds for $1$-dimensional and $(d-1)$-dimensional percolation on the torus.

\begin{Theorem}
\label{thm:one}
Let $\hat{p}_c=\hat{p}_c(d)$ be the critical threshold for bond percolation on $\Z^d.$ If $i=1$ then
\[\begin{cases}
\mathbb{P}_p\paren{A^{\square}}\rightarrow 0 & p<\hat{p}_c \\
\mathbb{P}_p\paren{S^{\square}}\rightarrow 1 & p>\hat{p}_c \\
\end{cases}\]
as $N\rightarrow \infty.$

Furthermore, if $i=d-1$ then 
\[\begin{cases}
\mathbb{P}_p\paren{A^{\square}}\rightarrow 0 & p<1-\hat{p}_c \\
\mathbb{P}_p\paren{S^{\square}}\rightarrow 1 & p>1-\hat{p}_c \\
\end{cases}\]
as $N\rightarrow \infty.$
\end{Theorem}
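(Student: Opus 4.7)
The statement breaks into the $i=1$ case, which reduces directly to classical bond percolation, and the $i=d-1$ case, which I will reduce to $i=1$ by a Poincar\'e-duality argument.

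For $i=1$, the plaquette system $P=P(1,d,N,p)$ is exactly bond percolation on $T^d_N$ at density $p$, and I will work on the universal cover $\Z^d$. In the subcritical regime $p<\hat p_c$, any nontrivial element of $\phi_*(H_1(P))$ is represented by a cycle in $P$ whose lift in $\Z^d$ realises a displacement in $(N\Z)^d\setminus\{0\}$; in particular, some open cluster in $\Z^d$ must then have diameter $\ge N$. By the sharp phase transition for bond percolation on $\Z^d$ (Menshikov; Aizenman--Barsky), the probability that a fixed vertex belongs to such a long cluster decays exponentially in $N$, and a union bound over the $N^d$ vertices of $T^d_N$ yields $\mathbb{P}_p(A^{\square})\to 0$. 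In the supercritical regime $p>\hat p_c$, I aim to show that for each coordinate direction $e_k$ the event $W_k$ that $P$ contains a wrapping cycle in direction $e_k$ has probability tending to $1$; since the $d$ classes $[W_k]$ generate $H_1(\T^d_N;F)$ and each $W_k$ is increasing, the FKG inequality gives $\mathbb{P}_p(S^{\square})\ge \prod_k \mathbb{P}_p(W_k)\to 1$. For the single-direction event $W_k$, I will invoke the Grimmett--Marstrand slab percolation theorem to conclude that supercritical bond percolation on $\Z^d$ admits a left-to-right open crossing of $[0,N]^d$ with probability tending to $1$; any such crossing projects to a wrapping loop on the torus in direction $e_k$.

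For $i=d-1$, I plan to reduce to $i=1$ using the self-duality of the cubical lattice. Each $(d-1)$-plaquette $\sigma$ of $T^d_N$ is transverse to a unique dual edge $\sigma^*$, giving a bijection between $(d-1)$-plaquettes of $T^d_N$ and edges of an isomorphic cubical torus $(T^d_N)^*$. Coupled to $P=P(d-1,d,N,p)$ is the dual bond configuration $Q$ on $(T^d_N)^*$ defined by $\sigma^*\in Q\iff\sigma\notin P$, so that $Q$ has the law of bond percolation at density $1-p$. The central topological input will be the pair of event identities
\begin{align*}
A^{\square}(d-1,d,N,p) &= \neg\, S^{\square}(1,d,N,1-p), \\
S^{\square}(d-1,d,N,p) &= \neg\, A^{\square}(1,d,N,1-p),
\end{align*}
which I propose to establish from the nondegenerate intersection pairing $H_{d-1}(\T^d;F)\times H_1(\T^d;F)\to F$ combined with a Lefschetz-duality dimension count. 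The easy direction---that any $1$-cycle on $Q$ has zero intersection with any $(d-1)$-cycle on $P$, since their supports are disjoint under the primal/dual correspondence---shows $\mathrm{im}(\phi_{*,P})\subseteq\mathrm{im}(\phi_{*,Q})^\perp$; the reverse inclusion, equivalent to $\dim\mathrm{im}(\phi_{*,P}) + \dim\mathrm{im}(\phi_{*,Q}) = d$, will follow from a rank computation in the long exact sequence of the pair $(\T^d_N, Q)$, together with Lefschetz duality identifying $H_1(\T^d_N, Q)$ with the homology of a thickening of $P$. Granted these identities, the two regimes $p<1-\hat p_c$ and $p>1-\hat p_c$ of the $i=d-1$ claim follow from the $i=1$ conclusions applied to $Q$ at density $1-p$.

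The main technical obstacle will be the supercritical $i=1$ step in dimensions $d\ge 3$, where planar box-crossing arguments are unavailable; my plan is to invoke Grimmett--Marstrand slab percolation as a black box to produce the required wrapping loops. A secondary subtlety is making the Lefschetz-duality dimension count rigorous at the chain level, which will require a careful and consistent choice of orientations on primal and dual cells, and is the place where the coefficient-field hypothesis $\mathrm{char}(F)\ne 2$ enters most naturally.
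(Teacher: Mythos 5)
Your subcritical $i=1$ argument and your $i=d-1$ reduction are essentially the paper's (Menshikov plus a union bound, and the rank identity $\rank\phi_*+\rank\psi_*=\binom{d}{1}$ proved via a deformation retraction of the complement onto the dual complex together with Lefschetz/Poincar\'e duality, cf.\ Lemma~\ref{lemma:duality}). The genuine gap is in your supercritical $i=1$ step, on which the whole $i=d-1$ case also leans. First, a left-to-right open crossing of $[0,N]^d$ does \emph{not} ``project to a wrapping loop on the torus'': its endpoints lie on the two identified faces but are in general different points of the identified $(d-1)$-subtorus, so the projected path is not a cycle; closing it up requires an additional connection argument (uniqueness of the crossing cluster, a renormalization step, or a point-to-point estimate). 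Second, even granting a wrapping cycle in each coordinate direction with probability tending to $1$, surjectivity of $\phi_*$ does not follow: a cycle ``wrapping in direction $e_k$'' may have nonzero winding in several directions, and the realized classes need not span $H_1(\T^d;F)$ (in $d=2$, all giant classes could be multiples of $(1,1)$). This is exactly the subtlety illustrated in Figure~\ref{fig:critical}, where neither the bond system nor its dual contains a cycle homologous to a standard generator. To make your route work you would have to build, for each $k$, a loop of class exactly $\vec{e}_k$ (crossing plus a controlled closing path), which is substantially more than citing Grimmett--Marstrand as a black box.

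The paper sidesteps both difficulties: it only proves the uniform constant lower bound $\mathbb{P}_p(A^{\square})\geq p\delta$ for $p>\hat p_c$, using the point-to-point slab connection estimate (Lemma~\ref{lemma:grimmett778}) from $0$ to $(N-1)\vec{e}_1$, closed by a single edge so the class is exactly $\vec{e}_1$; surjectivity is then obtained not by construction but by the symmetry-boosting inequality $\mathbb{P}_p(S^{\square})\geq C_0\mathbb{P}_p(A^{\square})^{C_1}$ (Lemma~\ref{lemma:spinning2} plus Proposition~\ref{prop:Zdirrep}) combined with the Friedgut--Kalai sharp-threshold theorem (Theorem~\ref{thm:FK}). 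Note also that this is where the hypothesis $\mathrm{char}(F)\neq 2$ actually enters (irreducibility of $\bigwedge^i F^d$ as a hyperoctahedral representation), not in orientation bookkeeping for Lefschetz duality, which with field coefficients needs no characteristic assumption. As written, your proposal is missing the ideas that bridge ``a crossing exists with positive probability'' to ``$\phi_*$ is surjective with probability tending to $1$,'' so the supercritical halves of both the $i=1$ and $i=d-1$ statements are not established.
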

In the above, we also show that the decay of $\mathbb{P}_p\paren{A^{\square}}$ below the threshold and  $\mathbb{P}_p\paren{S^{\square}}$ above the threshold is exponentially fast for both $i=1$ and $i=d-1.$

For other values of $i$ and $d$ we show the existence of a sharp threshold function as follows. For each $N \in \N,$ let $\lambda^{\square}\paren{N,i,d}$ satisfy
\begin{equation}
\label{eqn:lambda}
\mathbb{P}_{\lambda^{\square}\paren{N,i,d}}\paren{A^{\square}} = \frac{1}{2}\,.
\end{equation}
Note that $\mathbb{P}_p\paren{A^{\square}}$ is continuous as a function of $p,$ so such a $\lambda^{\square}\paren{N,i,d}$ exists by the intermediate value theorem. Since the tori of different sizes do not embed nicely into each other, it is not obvious that $\lambda^{\square}\paren{N,i,d}$ should be convergent a priori. 

We should mention that this choice of $\lambda^{\square}\paren{N,i,d}$ is somewhat arbitrary. We could replace $\frac{1}{2}$ in Equation~\ref{eqn:lambda} with any constant strictly between $0$ and $1,$ for example, and the sharp threshold results we use would apply just as well. In several cases we show that $\lambda^{\square}\paren{N,i,d}$ converges, and in those cases the limiting value could also be taken as a constant threshold function.

Define
\[p^{\square}_c\paren{i,d}=\limsup_{N\rightarrow\infty} \lambda^{\square}\paren{N}\]
and 
\[q^{\square}_c\paren{i,d}=\liminf_{N\rightarrow\infty} \lambda^{\square}\paren{N}\,.\]

With the understanding that these depend on choice of $i$ and $d$, which are always understood in context, we sometimes abbreviate to simply $p^{\square}_c,$ $q^{\square}_c,$ and $\lambda^{\square}(N).$
\begin{Theorem}
\label{thm:weak}

Suppose $\mathrm{char}\paren{\F} \neq 2.$ For every $d \ge 2,$ $1 \le i \le d-1,$ and $\epsilon > 0$
\[\begin{cases}
\mathbb{P}_{\lambda^{\square}\paren{N}-\epsilon}\paren{A^{\square}}\rightarrow 0 \\
\mathbb{P}_{\lambda^{\square}\paren{N}+\epsilon}\paren{S^{\square}}\rightarrow 1 \\
\end{cases}\]
as $N\rightarrow \infty.$

Moreover, for every $d \ge 2$ and $1 \le i \le d-1$ we have 
\[0 < q^{\square}_c \leq p^{\square}_c < 1\]
and $p^{\square}_c\paren{i,d}$ has the following properties.
\begin{enumerate}[label=(\alph*)]
    \item (Duality) $p^{\square}_c\paren{i,d}+q^{\square}_c\paren{d-i,d} =1.$
    \item (Monotonicity in $i$ and $d$) $p^{\square}_c\paren{i,d} < p^{\square}_c\paren{i,d-1}< p^{\square}_c\paren{i+1,d}$ if\\ $0<i<d-1.$ 
\end{enumerate}
\end{Theorem}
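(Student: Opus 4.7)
The plan is to combine a Friedgut--Kalai sharp threshold for symmetric monotone events with the deterministic implication ``nontriviality $\Rightarrow$ surjectivity'' that is to be developed in Section~\ref{sec:surjective}. Both $A^{\square}$ and $S^{\square}$ are increasing events, and the automorphism group of $T^d_N$ acts transitively on the set of $i$-plaquettes, so by Friedgut--Kalai the transition window for $A^{\square}$ around $\lambda^{\square}(N)$ has width $O(1/\log N)$; in particular $\mathbb{P}_{\lambda^{\square}(N)-\epsilon}(A^{\square})\to 0$ for every fixed $\epsilon>0$. Granting the Section~\ref{sec:surjective} result that, when $\mathrm{char}(F)\neq 2$, the image of $\phi_*$ in $H_i(T^d_N;F)$ is with high probability either trivial or all of $H_i(T^d_N;F)$---which uses that $H_i(T^d_N;F)$ is an irreducible representation of the point symmetry group of the cubical lattice, so that a symmetric, lattice-invariant nontrivial subspace must be everything---we obtain that $\mathbb{P}_p(A^{\square})\to 1$ implies $\mathbb{P}_p(S^{\square})\to 1$. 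Applied at $p=\lambda^{\square}(N)+\epsilon$, this yields the second displayed limit.

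Next I would identify $q^{\square}_c$ with $\liminf_N \lambda^{\square}(N)$, and $p^{\square}_c$ with $\limsup_N \lambda^{\square}(N)$, by a routine comparison. If $p<\liminf \lambda^{\square}(N)$ then eventually $\lambda^{\square}(N)>p+\delta$, so $\mathbb{P}_p(A^{\square})\to 0$ by the first limit and hence $\mathbb{P}_p(S^{\square})\to 0$, giving $p\leq q^{\square}_c$; conversely if $p>\liminf \lambda^{\square}(N)$ then along a subsequence $\lambda^{\square}(N_k)<p-\delta$, so $\mathbb{P}_p(S^{\square})\to 1$ along that subsequence, forcing $p\geq q^{\square}_c$. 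The argument for $p^{\square}_c$ is symmetric. The trivial bound $0<q^{\square}_c$ is a first-moment estimate: at small $p$ the expected number of open $i$-plaquettes per vertex is small, so a union bound rules out any closed $i$-cycle wrapping the torus. The bound $p^{\square}_c<1$ will follow from duality (a) together with the first-moment lower bound applied to $q^{\square}_c(d-i,d)$.

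For (a), Poincar\'e--Lefschetz duality on the torus identifies the dual cell complex of $T^d_N$ with a half-cell translate of $T^d_N$, and an $i$-face is open in $P$ iff the corresponding dual $(d-i)$-face is \emph{closed}. A kernel-image duality for the cellular (co)chain complex of a closed orientable manifold then shows that surjectivity of $\phi_*\colon H_i(P)\to H_i(T^d_N)$ at parameter $p$ is equivalent to triviality of the analogous map for the dual $(d-i)$-plaquette system at parameter $1-p$, which pins down $q^{\square}_c(i,d)=1-p^{\square}_c(d-i,d)$. For (b), a coordinate hyperplane gives a subcomplex inclusion $T^{d-1}_N\hookrightarrow T^d_N$ that is a split injection on $H_i$, so a giant $i$-cycle in the slice is still giant in the ambient torus; this yields the weak inequality $p^{\square}_c(i,d)\leq p^{\square}_c(i,d-1)$. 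To upgrade to strict inequality I would use an enhancement in the style of Aizenman--Grimmett: for $p$ slightly below $p^{\square}_c(i,d-1)$ there is still a positive density of open $i$-plaquettes using the new coordinate direction, which provide strictly more routes for building a giant cycle in $T^d_N$ than are available inside a single slice. The second inequality in (b) then follows from the first combined with the duality (a), applied to the equal codimensions $d-1-i$ in $T^{d-1}_N$ and $d-(i+1)$ in $T^d_N$.

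The main obstacle is the surjectivity transfer invoked at the start: promoting a nontrivial $\phi_*$ to a surjective one requires the representation-theoretic irreducibility of $H_i(T^d_N;F)$ under the lattice symmetry group and genuinely uses $\mathrm{char}(F)\neq 2$; this is the content of Section~\ref{sec:surjective} and cannot be bypassed by a soft symmetry argument. The strict monotonicity in (b) is the other delicate step, since naive coupling only produces the weak inequalities and a genuine enhancement is needed to separate the critical values.
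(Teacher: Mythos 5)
Your overall architecture---Friedgut--Kalai sharp thresholds for the increasing, lattice-symmetric events, the Section~\ref{sec:surjective} transfer from $A^{\square}$ to $S^{\square}$ via irreducibility of $H_i(\T^d_N;F)$, the identification $q^{\square}_c=\liminf_N\lambda^{\square}(N)$ and $p^{\square}_c=\limsup_N\lambda^{\square}(N)$, duality via the deterministic equivalence $S^{\square}\iff Z^{\blacksquare}$, and the coordinate slice $\T^{d-1}\hookrightarrow\T^d$ for the weak monotonicity---matches the paper. The genuine gap is at the strict inequalities in (b). You defer them to ``an enhancement in the style of Aizenman--Grimmett,'' but that is not an off-the-shelf tool here: the quantities $p^{\square}_c(i,d)$ are defined through giant-cycle events on finite tori (limits in $N$), not through infinite-cluster densities on $\Z^d$, and no Russo-formula-with-enhancement differential inequality is available for the homological events $A^{\square}$ without substantial new work. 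The paper's Proposition~\ref{prop:monotonicity} instead builds an explicit coupling: percolation at parameter $p$ on the thickened slice $T'=\T^d_N\cap\{0\le x_1\le 1\}$ is realized using auxiliary pair variables $\kappa(u,v)$ for the perpendicular faces, and whenever $v+\vec{e}_1$ is open and all $\kappa(v,u)$ with $u\in J(v)$ succeed, the face $v$ is added to a slice process $R$; this makes $R$ honest i.i.d.\ percolation on the slice at the strictly larger parameter $p+p(1-p)q^{2i}$, and a giant cycle of $R$ is corrected to a giant cycle of $P$ by adding boundaries $\partial w(v)$. Moreover, your claim that the second strict inequality follows from the first plus duality is not quite right: duality gives $p^{\square}_c(i,d-1)<p^{\square}_c(i+1,d)$ only from the strict inequality $q^{\square}_c(d-i-1,d)<q^{\square}_c(d-i-1,d-1)$ for the surjectivity thresholds, which is not implied by the $p^{\square}_c$ statement unless $p^{\square}_c=q^{\square}_c$ (unknown in general); the paper runs the same coupling again for the $S$-events to get it. Without these mechanisms part (b) is not established.

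Two smaller points. First, your argument that $0<q^{\square}_c$ because ``the expected number of open $i$-plaquettes per vertex is small'' is not a proof: one needs a Peierls-type enumeration of potential giant-cycle supports, whose size grows with $N$; the paper notes such an argument works but actually derives $0<q^{\square}_c\le p^{\square}_c<1$ from Theorem~\ref{thm:one} combined with the weak monotonicity and duality. Second, the transfer step as you phrase it (``$\mathbb{P}_p(A^{\square})\to 1$ implies $\mathbb{P}_p(S^{\square})\to 1$'') is not literally what Section~\ref{sec:surjective} provides: Corollary~\ref{cor:bothsurjective} only gives $\mathbb{P}_p(S^{\square})\ge C_0\,\mathbb{P}_p(A^{\square})^{C_1}$, a constant lower bound, and the image of $\phi_*$ for a fixed configuration is not lattice-invariant, so one must still apply Friedgut--Kalai to the increasing symmetric event $S^{\square}$ itself (as in Proposition~\ref{prop:sharp2}) to push its probability to $1$ at $\lambda^{\square}(N)+\epsilon$. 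These two are repairable rearrangements of ingredients you already cite; the missing coupling in (b) is the substantive omission.
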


It follows that $p^{\square}_c=q^{\square}_c$ for $i=d/2$, $i=1$, and $i=d-1$, and we conjecture that this equality (and hence sharp threshold from a trivial map to a surjective one at a constant value of $p$) holds for all $i$ and $d.$ Bobrowksi and Skraba make analogous conjectures for the continuum percolation model in~\cite{bobrowski2020homological2}. 

We also apply our methods to Bernoulli site percolation on the tiling of the torus by $d$-dimensional permutohedra, which was previously studied in~\cite{bobrowski2020homological}. The precise definitions are as follows. Let $$\hat{\R}^d \coloneqq \set{(x_0,x_1,\ldots,x_d) : \sum_{k=0}^d x_k = 0}.$$ Recall that the root lattice $\mathcal{A}_d$ is defined by
$$\mathcal{A}_d \coloneqq \hat{\R}^d \cap \Z^{d+1}.$$ The dual lattice is then defined by
$$\mathcal{A}_d^* \coloneqq \set{x \in \hat{\R}^d : \forall y \in \mathcal{A}_d, x\cdot y \in \Z}$$ which is generated by the basis
\begin{align*}
    B \coloneqq \set{\vec{1}-d\vec{e}_k: 1\leq k \leq d}\,.
\end{align*}
Here $\vec{1}$ is the vector whose entries all equal $1$. 
The closed Voronoi cells of $\mathcal{A}^*_d$ in $\hat{\R}^d$ are $d$-dimensional permutohedra. When $d=2,$ $\mathcal{A}_2^*$ is the triangular lattice and the permutohedra are hexagons. For the case $d=3,$  $\mathcal{A}_3^*$ is the body-centered cubic lattice and the permutohedra are truncated octahedra (see~\cite{baek2009some} for a detailed exposition).

Consider the torus ${\bf T}^d_N$ as the parallelepiped generated by $\set{Nv : v \in B}$ with opposite faces identified (we use boldface to distinguish from the plaquette structure on the torus). Define $Q = Q(d,N,p)$ to be the random set obtained by adding each permutohedron independently with probability $p.$ The topological justification for calling this site percolation is that the adjacency graph on the permutohedra of $Q$ is site percolation on the lattice $\mathcal{A}_d^*,$ a generalization of how site percolation on the triangular lattice is sometimes studied instead as face percolation on hexagons. Moreover, the topology of $Q$ can be recovered from the information encoded in its adjacency graph as described in the second paragraph of Section~\ref{sec:monotonicity}. This key property is not shared by site percolation on the cubical lattice. 

The giant cycle events are defined as before, except that $i$-dimensional giant cycles exhibit interesting behavior for all $1\leq i\leq d-1$ (in the plaquette model $P\paren{i,d,N,p},$ all giant cycles in homological dimensions less than $i$ are automatically present, and there can be no giant cycles in homological dimensions exceeding $i.$) More precisely, let $\varphi : Q \hookrightarrow {\bf T}^d_N$ be the inclusion, and let $\varphi_{i*} : H_i\paren{Q} \to H_i\paren{{\bf T}^d_N}$ be the induced maps for homology in each dimension. For each $i,$ Let $A^{\hexagon}_i$ be the event that $\varphi_{i*}$ is nonzero and let $S^{\hexagon}_i$ be the event that $\varphi_{i*}$ is surjective. 

For each $N \in \N,$ let $\lambda_i^{\hexagon}\paren{N,d}$ satisfy
\[\mathbb{P}_{\lambda_i^{\hexagon}\paren{N,d}}\paren{A_i^{\hexagon}} = \frac{1}{2}\,.\]
Define
\[p^{\hexagon}_i\paren{d}=\limsup_{N\rightarrow\infty} \lambda_i^{\hexagon}\paren{N,d}\]
and 
\[q^{\hexagon}_i\paren{d}=\liminf_{N\rightarrow\infty} \lambda_i^{\hexagon}\paren{N,d}\,.\]

\begin{Theorem}\label{thm:permutohedral}
Suppose $\mathrm{char}\paren{\F}=0$ or that $\mathrm{char}\paren{\F}$ is an odd prime not dividing $d+1.$ For every $d \ge 2,$ $1 \le i \le d-1,$ and $\epsilon > 0$
    \[\begin{cases}
    \mathbb{P}_{\lambda_i^{\hexagon}\paren{N}-\epsilon}\paren{A_i^{\hexagon}}\rightarrow 0 \\
    \mathbb{P}_{\lambda_i^{\hexagon}\paren{N}+\epsilon}\paren{S_i^{\hexagon}}\rightarrow 1 \\
    \end{cases}\]
    as $N\rightarrow \infty.$
    For every $d \ge 2$ and $1 \le i \le d-1$ we have 
    \[0<q_i^{\hexagon} \leq  p_i^{\hexagon}<1\,.\]
    In some cases $\lambda_i^{\hexagon}$ converges, namely 
    \[\lim_{N\to\infty} \lambda_1^{\hexagon}\paren{N} = p_c(\mathcal{A}_d^*)\,,\]
    \[\lim_{N\to\infty} \lambda_{d-1}^{\hexagon}\paren{N} =1-p_c(\mathcal{A}_d^*)\,,\]
    and if $d$ is even, then \[\lim_{N\to\infty} \lambda_{d/2}^{\hexagon}\paren{N} = \frac{1}{2}\,,\]
    even if we drop the assumption on $\mathrm{char}\paren{\F}.$
    Moreover, $p^{\hexagon}_i\paren{d}$ has the following properties.
\begin{enumerate}[label=(\alph*)]
    \item (Duality) $p^{\hexagon}_i\paren{d}+q^{\hexagon}_{d-i}\paren{d} =1.$
    \item (Monotonicity in $i$ and $d$) $p^{\hexagon}_i\paren{d} < p^{\hexagon}_i\paren{d-1}< p^{\hexagon}_{i+1}\paren{d}$ if\\ $0<i<d-1.$ 
\end{enumerate}
\end{Theorem}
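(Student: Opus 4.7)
The plan is to adapt the three-symmetry strategy used for plaquette percolation in Theorems \ref{thm:half}, \ref{thm:one}, and \ref{thm:weak} to the permutohedral site model. Those proofs rested on self-duality of the model (under Alexander duality on the torus), a transitive action of the lattice point symmetry group on the top-dimensional cells, and the irreducibility of $H_i(\mathbf{T}^d_N;F)$ as a representation of that symmetry group; all three carry over to the permutohedral setting. The role played by $\mathrm{char}(F)\neq 2$ for the cubical lattice is taken over by $\mathrm{char}(F)\nmid d+1$, because $H_1(\mathbf{T}^d_N;F)\cong\hat{\R}^d$ is the standard $d$-dimensional representation of $S_{d+1}\subseteq\mathrm{Aut}(\mathcal{A}_d^*)$, which is irreducible exactly when $\mathrm{char}(F)$ does not divide $d+1$, and then so are the exterior powers $\wedge^i\hat{\R}^d\cong H_i(\mathbf{T}^d_N;F)$.

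With these three ingredients in hand, the proof of the sharp threshold and of the upgrade from appearance to surjectivity is formally the same as in the plaquette model. A Friedgut--Kalai sharp threshold theorem applied to the monotone event $A_i^{\hexagon}$, which is invariant under the transitive symmetry action, yields $\mathbb{P}_{\lambda_i^{\hexagon}(N)+\epsilon}(A_i^{\hexagon})\to 1$. Averaging $\varphi_{i*}$ over the point symmetry group shows that its image is a subrepresentation of $H_i(\mathbf{T}^d_N;F)$, and irreducibility forces this image to be all of $H_i$ on $A_i^{\hexagon}$ with probability tending to $1$, so $\mathbb{P}_{\lambda_i^{\hexagon}(N)+\epsilon}(S_i^{\hexagon})\to 1$. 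The duality identity $p_i^{\hexagon}(d)+q_{d-i}^{\hexagon}(d)=1$ follows from Alexander duality on $\mathbf{T}^d_N$, identifying giant $i$-cycles in $Q(d,N,p)$ with giant $(d-i)$-cycles in its complement at parameter $1-p$; the monotonicity relations and the bounds $0<q_i^{\hexagon}\leq p_i^{\hexagon}<1$ reduce to the sharp threshold together with coupling arguments across dimensions and torus sizes, just as in the cubical case.

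For the three cases where convergence of $\lambda_i^{\hexagon}(N)$ is claimed, I would argue directly. For $i=1$, the nerve theorem identifies the adjacency graph of occupied permutohedra with Bernoulli site percolation on $\mathcal{A}_d^*$, and a giant $1$-cycle is exactly a loop wrapping the torus in that graph; standard block arguments (exponential decay of connection probabilities below $p_c(\mathcal{A}_d^*)$ and the existence of wrap-around paths above it) give $\lim\lambda_1^{\hexagon}(N)=p_c(\mathcal{A}_d^*)$. The case $i=d-1$ reduces to this one by applying Alexander duality to the complement of $Q$ at parameter $1-p$, giving the limit $1-p_c(\mathcal{A}_d^*)$. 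For $i=d/2$ with $d$ even, Alexander duality becomes a self-duality at the level of the middle homology, so $\mathbb{P}_p(A_{d/2}^{\hexagon})$ and $\mathbb{P}_{1-p}(A_{d/2}^{\hexagon})$ differ only by a boundary contribution that vanishes in the limit, forcing $\lambda_{d/2}^{\hexagon}(N)\to 1/2$.

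The main obstacle I anticipate is the representation-theoretic input: verifying that $\wedge^i\hat{\R}^d$ is irreducible as a representation of the full point symmetry group of the permutohedral lattice (not merely of its $S_{d+1}$ subgroup) whenever $\mathrm{char}(F)\nmid d+1$, and checking that the symmetrization argument upgrading $A_i^{\hexagon}$ to $S_i^{\hexagon}$ goes through cleanly in this characteristic. Once that representation-theoretic input is in place, the remaining steps closely track the cubical proofs, with the Alexander duality computations on the permutohedral torus replacing the more familiar cubical ones.
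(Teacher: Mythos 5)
Your proposal follows essentially the same route as the paper (Bobrowski--Skraba duality via Lemma~\ref{lemma:permutoduality}, irreducibility of $H_i(\mathbf{T}^d_N;F)$ as an $S_{d+1}$-representation when $\mathrm{char}(F)\nmid d+1$ as in Proposition~\ref{prop:permutoirrep}, Friedgut--Kalai, and the special arguments for $i=1,\,d-1,\,d/2$), but two steps are stated in a way that would not work as written. The first is the upgrade from $A_i^{\hexagon}$ to $S_i^{\hexagon}$: you assert that ``averaging $\varphi_{i*}$ over the point symmetry group shows that its image is a subrepresentation,'' and that irreducibility then forces surjectivity with probability tending to $1$. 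For a fixed configuration $Q$ the image of $\varphi_{i*}$ is just a subspace of $H_i(\mathbf{T}^d_N)$; the group action moves $Q$, so this image is not group-invariant and irreducibility says nothing about it directly. The paper's mechanism (Lemma~\ref{lemma:spinning2}) is genuinely different: stratify the event $\{\rank\varphi_{i*}\ge k\}$ according to the stabilizer of the image; when the stabilizer is the whole group, irreducibility forces the image to be everything, and otherwise one produces two group-translates $B$ and $gB$ of a sub-event, each of probability at least a constant times $\mathbb{P}_p\paren{\rank\varphi_{i*}\ge k}$, whose intersection has rank at least $k+1$, and Harris's inequality turns this into $\mathbb{P}_p\paren{S_i^{\hexagon}}\ge C_0\,\mathbb{P}_p\paren{A_i^{\hexagon}}^{C_1}$. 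Note also that your order of operations (Friedgut--Kalai applied to $A_i^{\hexagon}$, then the upgrade) only yields a constant lower bound on $\mathbb{P}\paren{S_i^{\hexagon}}$; one must still apply Theorem~\ref{thm:FK} to the increasing, symmetric event $S_i^{\hexagon}$ itself to get convergence to $1$, as in Proposition~\ref{prop:sharp2}. On the other hand, your worry about irreducibility under the full point group of the lattice is unnecessary: irreducibility under the subgroup $S_{d+1}$ suffices and automatically persists for any larger group.

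The second gap is monotonicity, part (b), which you dispatch with ``coupling arguments\ldots just as in the cubical case.'' The cubical proof rests on the flat subtorus $\T^d_N\cap\set{x_1=0}$, and there is no analogous isometric copy of $A_{d-1}^*$ inside $A_d^*$; this is precisely the difficulty Proposition~\ref{prop:monotonicity_permuto} addresses. The paper constructs a surrogate: take the permutohedra meeting a $(d-1)$-dimensional sublattice orthogonal to the vector pointing to the center of a fixed facet, pass to the upper envelope $S'$ of this slab, and use Lemma~\ref{lemma:stackedperms} together with the nerve theorem to show that $S'$ has the same nerve (and hence the same induced percolation structure, giant cycles included) as the $(d-1)$-dimensional permutohedral torus; the strict inequalities then follow by a local sprinkling/coupling as in the plaquette case and duality. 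Without this construction or an equivalent one, part (b) is unproven. Finally, your mechanism for $i=d/2$ (``differ only by a boundary contribution that vanishes'') is not the actual argument: on the torus the duality is exact, and the point is that Lemma~\ref{lemma:permutoduality} forces at least one of the events $A_{d/2}^{\hexagon}$ for $Q$ and for $Q^{\bhexagon}$ to occur, which combined with the distributional identity $Q^{\bhexagon}\sim Q(d,N,1-p)$ gives $\mathbb{P}_{1/2}\paren{A_{d/2}^{\hexagon}}\ge 1/2$, hence $p_{d/2}^{\hexagon}\le 1/2$; duality $p_{d/2}^{\hexagon}+q_{d/2}^{\hexagon}=1$ and $q_{d/2}^{\hexagon}\le p_{d/2}^{\hexagon}$ then pin the threshold at $1/2$.
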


In particular, when $d=4,$ the random set $Q$ exhibits three qualitatively distinct phase transitions at  $p_c(A_4^*), \frac{1}{2},$ and  $1-p_c(A_4^*),$ where $p_c(A_4^*)$ is the site percolation threshold for the lattice $A_4^*.$

\subsection{Review of Homology}\label{sec:homology}

We provide a brief review of the definition of homology with coefficients in a field $\F.$ We focus on the case of subcomplexes of $\T^d_N,$ which are relevant to plaquette percolation, though the definition easily generalizes to other regular CW complexes such as those obtained by site percolation on the permutohedral lattice.
For a more comprehensive introduction to homology, please see a reference such as~\cite{hatcher2002algebraic}. A hands-on treatment specific to cubical complexes can also be found in Chapter 2 of~\cite{kaczynski2004computational}.

A \emph{subcomplex} of $\T^d_N$ is a set $P$ that is the union of plaquettes in $\T^d_N$ of various dimensions, with the property that if $\tau$ is in the boundary of $\sigma$ and $\sigma\subset P$ then $\tau\subset P.$ Denote the set of $i$-plaquettes of $P$ by $P^{\paren{i}}.$ Fix a field $\F$. For $0\leq i \leq d$ define the $i$-dimensional chain group of $P$ with coefficients in $\F$ as the vector space of formal $\F$-linear combinations of the $i$-plaquettes of $P$:
\[C_i\paren{P}=C_i\paren{P;\F}\coloneqq \set{\sum{\sigma}\in P^{(i)}c_{\sigma}\sigma : c_{\sigma}\in \F}\,.\]
The \emph{boundary maps} $\partial_i:C_i\paren{P}\rightarrow C_{i-1}\paren{P}$ relate chain groups of different dimensions. Roughly speaking, if $\sigma$ is an $i$-plaquette then $\partial_i\sigma$ is the oriented sum of $(i-1)$-plaquettes adjacent to $\sigma.$ The signs corresponding to the orientation must be chosen carefully in order so that the boundary operator satisfies the fundamental relation

\begin{equation}
\label{eq:bd}
\partial_{i}\circ \partial_{i-1}=0\,.
\end{equation}

While the boundary of $0$-plaquette $v$ is empty, the boundary of a $1$-plaquette $\paren{v_0,v_1}$ is $v_1-v_1,$ and the boundary of a two plaquette $\paren{v_0,v_1,v_2,v_3}$ is $\paren{v_0,v_1}+\paren{v_1,v_2}+\paren{v_2,v_3}-\paren{v_0,v_3},$ the general formula is more complicated. We must first define a notation for $i$-plaquettes. Let $J=J\paren{\sigma}=\set{j_1,\ldots,j_i}$ where $1\leq j_1 < j_2 \ldots < j_i \leq d.$  For $k\in J,$ set $I_k=\brac{x_k,x_k+1}$ for some $x_k\in \set{0,\ldots,N-1}.$ On the other hand, for $k\in \set{1,\ldots,d}\setminus J,$ let $I_k=\set{x_k}$ for some $x_k\in \set{0,\ldots,N-1}.$ Then a general $i$-plaquette in $\T_d^N$ can be written uniquely in the form $\sigma = \prod_{k=1}^{d} I_k.$ We can now define the boundary $\partial_i \sigma$ of a plaquette $\sigma \subset [0,1]^d$ by
\[\sum_{l=1}^i \paren{-1}^{l-1} \brac{\prod_{1 \leq k < j_l} I_k \times \set{1} \times \prod_{j_l < m \leq d} I_m - \prod_{1 \leq k < j_l} I_k \times \set{0} \times \prod_{j_l < m \leq d} I_m}\,,\]
and by translating appropriately and extending linearly we define the boundary map $\partial_i:C_i\paren{P}\rightarrow C_{i-1}\paren{P}.$ 

\begin{figure}
\centering
\includegraphics[width=.4\textwidth]{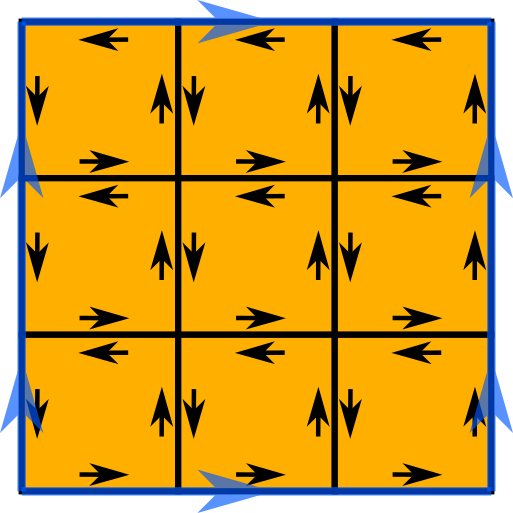}
\caption{The torus $\T^2_3$ consisting of nine $2$-plaquettes. The opposite sides of the larger square are identified so that the pair of single blue arrows match and pair of double blue arrows are each matched. The formal sum of the oriented edges indicated by the four black arrows on a $2$-plaquette is its image under the boundary map $\partial_2.$ Note that the sum of the boundaries of all of the $2$-plaquettes is zero, so the sum of all of the plaquettes is an element of $Z_2\paren{\T^2_3}.$}
\label{fig:subdivision}
\end{figure}

Note that the dimension of $C_i\paren{\T^d_N}$ is the number of $i$-plaquettes of $\T^d_N,$ which grows as a polynomial in $N.$ However, we can use the boundary maps to define a quantity which does not depend on $N,$ and will be topologically invariant more generally. The space of \emph{$i$-dimensional cycles} of $P$ is the kernel of the linear transformation $\partial_i$: $Z_i\paren{P}=\mathrm{Ker}\,\partial_i$ and the space of \emph{$i$-dimensional boundaries} is the image of $\partial_{i+1}: B_{i}\paren{P}=\mathrm{Im}\,\partial_{i+1}.$ By Equation~\ref{eq:bd}, we have that $B_i\paren{P}\subseteq Z_i\paren{P}$ and the quotient vector space $Z_i\paren{P}/B_i\paren{P}$ is well-defined. This quotient of cycles modulo boundaries is the \emph{$i$-dimensional homology} of $P$ and is denoted by $H_i\paren{P;\F}$ or $H_i\paren{P}$ when the coefficient field is given by context. 

For example, consider $\T^2_3$ shown in Figure~\ref{fig:subdivision}. One can check that the only chains $a\in C_{2}\paren{\T^2_3}$ that satisfy $\partial_2 a=0$ are those that give the same coefficient to each $2$-plaquette of $\T^2_3.$ Thus $Z_2\paren{\T^2_3}$ is one-dimensional. Moreover, $C_3\paren{\T^2_3}=0,$ it follows that $B_2\paren{\T^2_3}=0$ and thus $H_2\paren{\T^2_3}$ is also one-dimensional. A more laborious computation reveals that $H_1\paren{\T^2_3}$ is two-dimensional and is generated by oriented paths at on the left side and bottom of Figure~\ref{fig:subdivision}. More generally, it turns out that $H_i\paren{\T^d}$ is a vector space of dimension $\binom{d}{i},$ and is generated by $i$-planes in the coordinate directions of $\T^d_N.$ In theory, this can be found by performing an even more laborious computation using the cubical complex structure on $\T^d_3.$  However, in practice, the homology of $\T^d$ is computed using the K\"{u}nneth formula for homology, which gives an expression for the homology of a product space in terms of the homology of its factors (see Section 3b of~\cite{hatcher2002algebraic}). We describe $H_i\paren{\T^d}$ at the beginning of Section~\ref{sec:examples}.

We can now give a formal definition of a giant cycle. Let $P$ be a subcomplex of $\T^d_N$ and let $a\in Z_i\paren{P}$ be an $i$-cycle representing an equivalence class $[a]\in H_i\paren{P}.$ Then $a$ is also an $i$-cycle of $\T^d_N.$ Moreover, the equivalence class of of $a$ in $H_i\paren{\T^d}=Z_i\paren{\T^d}/B_i\paren{\T^d}$ only depends on the equivalence class of $a$ in $H_i\paren{P}=Z_i\paren{P}/B_i\paren{P}$ because $B_i\paren{P}\subseteq B_i\paren{\T^d_N}.$ More formally, if $a,b\in Z_i\paren{P}$ are in the same homology class in $H_i\paren{P}$ then $b=a+\partial_{i+1} c$ for some $c\in C_{i+1}\paren{P}.$ But $c$ is also an element of $C_{i+1}\paren{\T^d_N},$ so $a$ and $b$ are in the same homology class in $H_i\paren{\T^d_N}.$ Therefore, we have a well-defined map on homology $\phi_*:H_i\paren{P}\rightarrow H_i\paren{\T^d_N},$ as illustrated in Figure~\ref{fig:giantcycledefinition}. Then $a$ is a \emph{giant cycle} if $\phi_*\paren{a}\neq 0$ or, in other words, if $a$ is not a boundary in $H_i\paren{\T^d_N}.$

\begin{figure}
\centering
\includegraphics[width=.8\textwidth]{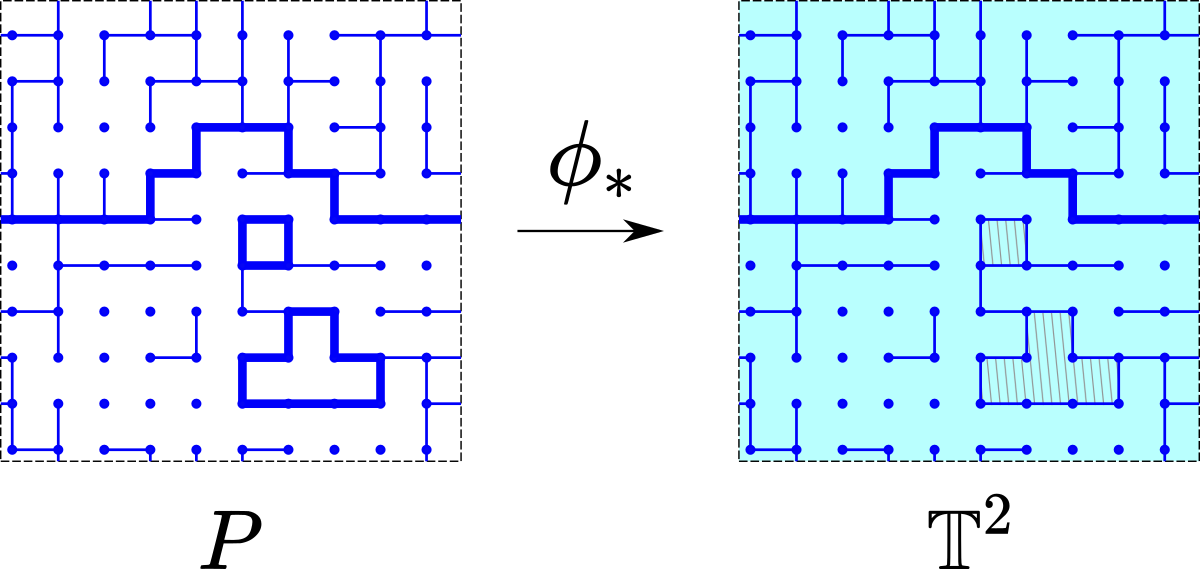}
\caption{The two torus $\T^2_{10}$ and a subcomplex $P.$ shown in a square with periodic boundary conditions. Three cycles of $Z_1\paren{P}$ are illustrated in bold on the right, each of which represents a different non-zero homology class. When these cycles are mapped to $H_1\paren{\T^2_{10}}$ by $\phi_*,$ two of them become boundaries and one (shown in bold) remains a non-zero cycle and is thus a giant cycle of $P.$  }
\label{fig:giantcycledefinition}
\end{figure}

\subsection{Probabilistic tools}
Let $X$ be a finite set and let $\mu_p$ be the $\mathrm{Bernoulli}(p)$ product measure on the power set $\mathcal{P}\paren{X}$ defined by including each element of $X$ independently with probability $p.$ That is, if $Y\subseteq X,$ 
\[\mu\paren{Y}=p^{\abs{Y}}\paren{1-p}^{\abs{X}-\abs{Y}}\,.\]
An event $B$ is \emph{increasing} if 
\[Y_0\subset Y_1, Y_0\in B\implies Y_1\in B\,.\]
We require Harris's Inequality on increasing events~\cite{harris1960lower}, which is a special case of the FKG Inequality~\cite{fortuin1971correlation}.    
\begin{Theorem}[Harris's Inequality]
If $B_1,\ldots, B_j$ are increasing events then
\[\mathbb{P}\paren{\bigcap_{k=1}^j B_k} \geq \prod_{k=1}^j \mathbb{P}\paren{B_k}\,.\]
\end{Theorem}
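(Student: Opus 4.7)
The plan is to prove Harris's inequality in two stages: first establish the two-event version for the product Bernoulli measure, then iterate to obtain the $j$-event version. It is convenient to work with the more general statement that $\mathbb{E}[fg] \geq \mathbb{E}[f]\mathbb{E}[g]$ for any two bounded increasing functions $f,g : \{0,1\}^n \to \mathbb{R}$ (where increasing means coordinate-wise), since indicators of increasing events are a special case.

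For the two-event inequality I would induct on $n$, the number of Bernoulli coordinates. The base case $n=1$ is the elementary inequality
\[
\bigl(f(1)-f(0)\bigr)\bigl(g(1)-g(0)\bigr)\geq 0,
\]
which holds because both factors have the same sign by monotonicity; expanding and comparing with $\mathbb{E}[f]\mathbb{E}[g]=\bigl(pf(1)+(1-p)f(0)\bigr)\bigl(pg(1)+(1-p)g(0)\bigr)$ yields $\mathbb{E}[fg]\geq \mathbb{E}[f]\mathbb{E}[g]$ directly. For the inductive step, I would condition on the last coordinate $X_n$ and define $F(x_1,\ldots,x_{n-1}) = \mathbb{E}[f \mid X_1=x_1,\ldots,X_{n-1}=x_{n-1}]$ and similarly $G$. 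The key observation is that $F$ and $G$ remain increasing in $(x_1,\ldots,x_{n-1})$ because $f$ and $g$ are increasing in these coordinates and integration over the independent last coordinate preserves monotonicity. Applying the $n=1$ case to the slices and then the inductive hypothesis to $F$ and $G$ on $\{0,1\}^{n-1}$ gives
\[
\mathbb{E}[fg] \geq \mathbb{E}[FG] \geq \mathbb{E}[F]\,\mathbb{E}[G] = \mathbb{E}[f]\,\mathbb{E}[g].
\]

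Once the two-event version is in hand, the $j$-event statement follows by a second induction, this time on $j$. Writing $C = B_2\cap\cdots\cap B_j$, the intersection of increasing events is itself an increasing event, so its indicator $\mathbf{1}_C$ is an increasing function. Applying the two-event inequality to $\mathbf{1}_{B_1}$ and $\mathbf{1}_C$ gives
\[
\mathbb{P}\!\left(\bigcap_{k=1}^j B_k\right) = \mathbb{E}[\mathbf{1}_{B_1}\mathbf{1}_C] \geq \mathbb{P}(B_1)\,\mathbb{P}(C),
\]
and the inductive hypothesis applied to $B_2,\ldots,B_j$ bounds $\mathbb{P}(C)$ below by $\prod_{k=2}^j \mathbb{P}(B_k)$.

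The main technical obstacle is the step in the first induction that verifies the conditional expectations $F$ and $G$ are still increasing in the remaining coordinates; this uses that the coordinates are independent, so that conditioning on $X_n$ does not couple the distribution of $(X_1,\ldots,X_{n-1})$ to $f$'s behavior in those variables. After that observation the argument is essentially a two-line computation at each level.
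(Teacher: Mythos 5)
Your proof is correct. Note that the paper does not actually prove this statement: it is quoted as a known result, citing Harris's original 1960 paper and noting it is a special case of the FKG inequality. The argument you give is the standard one — the two-function inequality $\mathbb{E}[fg]\geq\mathbb{E}[f]\,\mathbb{E}[g]$ for increasing functions on $\{0,1\}^n$, proved by induction on $n$ with the one-coordinate base case $p(1-p)\bigl(f(1)-f(0)\bigr)\bigl(g(1)-g(0)\bigr)\geq 0$ and the conditioning step using that averaging over an independent coordinate preserves monotonicity, followed by induction on $j$ using that an intersection of increasing events is increasing. All the steps you flag as needing care (monotonicity of the conditional expectations $F$ and $G$, which relies on independence of the coordinates) are handled correctly, so your write-up would serve as a self-contained proof of the cited theorem.
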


Another key tool for us is the following theorem of Friedgut and Kalai on sharpness of thresholds~\cite{friedgut1996every}. Recall that the action of a group $G$ on $X$ is transitive if for every $x,y \in X,$ there is a $g \in G$ such that $g \cdot x = y.$

\begin{Theorem}[Friedgut and Kalai]
\label{thm:FK}
There exists a universal constant $\rho>0$ so that the following holds: Let $B$ be an increasing event that is invariant under a transitive group action on $X.$ If $\mu_p\paren{B}>\epsilon>0$ and 
\begin{equation}
    \label{eqn:FK}
q\geq p+\rho \frac{\log\paren{1/\paren{2\epsilon}}}{\log\paren{\abs{X}}}
\end{equation}
then $\mu_q\paren{B}>1-\epsilon.$
\end{Theorem}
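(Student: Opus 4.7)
The plan is to prove this classical result via the Russo--Margulis / KKL differential-inequality approach that Friedgut and Kalai themselves used in \cite{friedgut1996every}. The three ingredients are: Russo's formula for the derivative of $\mu_p(B)$, a lower bound on individual influences coming from the Kahn--Kalai--Linial inequality (in the form refined by Talagrand and extended to arbitrary bias by Bourgain--Kahn--Kalai--Katznelson--Linial), and the symmetry hypothesis, which equalizes all coordinate influences and converts the KKL lower bound into a lower bound on $\frac{d}{dp}\mu_p(B)$.

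First, I would set $n=\abs{X}$ and, for each coordinate $k$, define the influence
\[
I_k^p(B)=\mu_p\paren{\set{\omega : \omega\in B \text{ if } \omega_k=1 \text{ and } \omega\notin B \text{ if }\omega_k=0}}.
\]
Russo's formula gives $\frac{d}{dp}\mu_p(B)=\sum_{k=1}^n I_k^p(B)$. Because the transitive group action on $X$ permutes coordinates and leaves $B$ invariant, the map $k\mapsto I_k^p(B)$ is constant, say equal to $I^p$. Hence $\frac{d}{dp}\mu_p(B)=nI^p$.

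Next, I would apply the Talagrand/BKKKL inequality: for any monotone event $B$ in the $p$-biased cube,
\[
\sum_{k=1}^n \frac{I_k^p(B)}{\log(1/I_k^p(B))}\geq c\,\mu_p(B)(1-\mu_p(B))
\]
for an absolute constant $c>0$. Substituting $I_k^p=I^p$ and using $I^p\leq 1$ (so $\log(1/I^p)\leq \log n$ in the regime $I^p\geq 1/n$ which is forced when $\mu_p(B)(1-\mu_p(B))$ is bounded away from $0$), one obtains
\[
\frac{d}{dp}\mu_p(B)=nI^p\geq c'\,\mu_p(B)(1-\mu_p(B))\log n.
\]

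Finally, I would integrate this logistic differential inequality. Writing $g(p)=\log\paren{\mu_p(B)/(1-\mu_p(B))}$, the inequality becomes $g'(p)\geq c'\log n$, so $g(q)-g(p)\geq c'(q-p)\log n$. The hypothesis $\mu_p(B)>\epsilon$ gives $g(p)>\log(\epsilon/(1-\epsilon))$, and the goal $\mu_q(B)>1-\epsilon$ is equivalent to $g(q)>\log((1-\epsilon)/\epsilon)$. Thus it suffices to take $c'(q-p)\log n\geq 2\log((1-\epsilon)/\epsilon)$, which is implied by the hypothesized bound once one chooses $\rho$ to be a large multiple of $1/c'$. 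The main obstacle in this plan is really the second step: the Talagrand/BKKKL influence inequality is a genuinely deep Fourier-analytic (or hypercontractive) fact on the biased Boolean cube, and is the source of the crucial $\log n$ factor. Once it is in hand, Russo's formula, the symmetry argument, and the integration are elementary.
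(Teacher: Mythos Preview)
The paper does not prove this theorem at all; it is quoted as a black-box tool with a citation to Friedgut and Kalai~\cite{friedgut1996every}, in the same way that Harris's inequality is quoted. So there is no ``paper's own proof'' against which to compare.

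Your sketch is a faithful outline of the Friedgut--Kalai argument and is essentially correct. One small technical wrinkle: when you invoke the Talagrand-type inequality $\sum_k I_k/\log(1/I_k)\ge c\,\mu_p(B)(1-\mu_p(B))$ and then pass to $nI^p\ge c'\mu_p(B)(1-\mu_p(B))\log n$ by bounding $\log(1/I^p)\le\log n$, that bound requires $I^p\ge 1/n$, which you assert is ``forced'' but do not justify. The cleanest way to avoid this is to use KKL in the form $\max_k I_k^p\ge c\,\mu_p(B)(1-\mu_p(B))\,\tfrac{\log n}{n}$ directly; combined with the equality of influences from transitivity this gives $nI^p\ge c\,\mu_p(B)(1-\mu_p(B))\log n$ with no side condition. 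Alternatively, one can note that if $I^p<1/n$ then the Talagrand inequality forces $\mu_p(B)(1-\mu_p(B))$ to be $O(1/\log n)$, so $\mu_p(B)$ is already near $0$ or $1$ and the conclusion holds trivially. Either way the argument goes through; the rest (Russo, symmetry, and the logistic integration) is exactly right.
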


We use two more technical results on connection probabilities in the subcritical and supercritical phases in bond percolation in $\Z^d$ below.  For clarity, we state them in Section~\ref{sec:one} when they are needed.
    
\subsection{Definitions and notation}    
To define the \emph{dual system of plaquettes} $P^{\blacksquare}=P^{\blacksquare}\paren{i,d,N,p}$, let $(\T^d_N)^{\blacksquare}$ be the regular cubical complex obtained by shifting  $\T^d_N$ by $\frac{1}{2}$ in each coordinate direction. Each $i$-face of $\T^d_N$ intersects a unique $(d-i)$-face of $(\T^d_N)^{\blacksquare}$ and they meet in a single point at their centers. For example, the faces $\brac{0,1}^i\times\set{0}^{d-i}$ and $\set{1/2}^{i}\times\brac{-1/2,1/2}^{d-i}$ intersect in the point $\set{\frac{1}{2}}^i\times \set{0}^{d-i}.$ 

Define the dual system $P^{\blacksquare}$ to be the subcomplex of $(\T^d_N)^{\blacksquare}$ consisting of all faces for which the corresponding face in  $\T^d_N$ is not contained in $P$. See Figure~\ref{fig:critical}. Observe that the distribution of $P^{\blacksquare}\paren{i,d,N,p}$ is the same as that of $P\paren{d-i,d,N,1-p}.$ If $B^{\blacksquare}$ is an event defined in terms of $P^{\blacksquare}$ we will write $\mathbb{P}_p\paren{B^{\blacksquare}}$ to mean the probability of $B^{\blacksquare}$ with respect to the parameter $p$ of $P.$

\begin{figure}
\centering
\includegraphics[width=.5\textwidth]{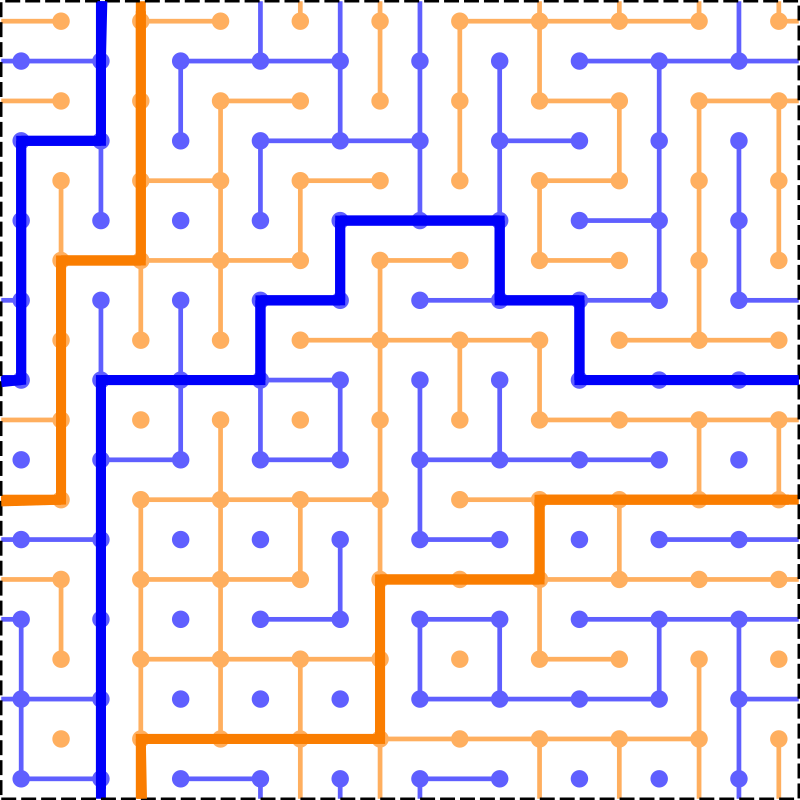}
\caption{Bond percolation at criticality (i.e.\ $p=1/2$) on the torus $T^2_{10}$ in blue, with the corresponding dual system of bonds in orange. Giant cycles are shown in bold. Observe that while $\rank\phi_*+\rank\psi_*=2$ (as required by duality), neither the bond system nor its dual has a giant cycle homologous to one of the standard basis elements of $H_1(\T^2).$}
\label{fig:critical}
\end{figure}

We always use the notation $\phi:P \hookrightarrow \T^d$ and $\psi:P^{\blacksquare} \hookrightarrow \T^d$ for the respective inclusion maps, and $\phi_*:H_{i}\paren{P}\rightarrow H_i\paren{\T^d_N}$ and $\psi_*:H_{d-i}\paren{P^{\blacksquare}}\rightarrow H_{d-i}\paren{\T^d}$ for the induced maps on homology. Also, we consistently use notation $A^{\square}=A^{\square}(i,d,N,p)$ for the event that $\operatorname{im}\phi_*\neq 0,$ $S^{\square}=S^{\square}(i,d,N,p)$ for the event that $\phi_*$ is surjective, and $Z^{\square}=Z^{\square}(i,d,N,p)$ for the event that $\phi_*$ is zero. Denote by $A^{\blacksquare}, S^{\blacksquare},$ and $Z^{\blacksquare}$ the corresponding events for $\psi_*.$

\subsection{Proof sketch}

We provide an overview of our main argument. Much of it is the same, mutatis mutandis, whether we are working with plaquettes or permutohedra. We will first prove our results with plaquettes, and then cover the changes that need to be made with permutohedra in Section~\ref{sec:permutohedra}. In the section on topological results (Section~\ref{sec:topological}), we show that duality holds in the sense that $\rank\phi_*+\rank\psi_*=\rank H_i\paren{\T^d}$  (Lemma~\ref{lemma:duality}). This is similar in spirit to results of~\cite{bobrowski2020homological} and~\cite{bobrowski2020homological2} for other models of percolation on the torus including permutohedral site percolation. In particular, at least one of the events $A^{\square}$ and $A^{\blacksquare}$ occurs, $S^{\square}$ occurs if and only if $Z^{\blacksquare}$ occurs, and  $S^{\blacksquare}$ occurs if and only if $Z^{\square}$ occurs. 

Our strategy is to exploit the duality between the events $S^{\square}$ and $Z^{\blacksquare}=\paren{A^{\blacksquare}}^c.$ Toward that end, we show that a threshold for $A^{\square}$ is also a threshold for $S^{\square}$ in Section~\ref{sec:surjective}. First, we use the action of the point symmetry group, i.e. the group of symmetries of $\T_d^N$ which preserve the origin, on the homology groups of the torus to show that there are constants $b_0$ and $b_1$ so that $\mathbb{P}_p\paren{S^{\square}}\geq b_0\mathbb{P}_p\paren{A^{\square}}^{b_1}.$ This follows from a more general result for events defined in terms of an irreducible representation of the point symmetry group (Lemma~\ref{lemma:spinning2}) and the fact that $H_i\paren{\T^d; \F}$ is an irreducible representation of the point symmetry group of $\Z^d$ assuming the characteristic of $\F$ does not equal $2$ (Proposition~\ref{prop:Zdirrep}). This is one point at which the argument differs for site percolation on the permutohedral lattice; to account for the symmetries of that lattice we include a different argument that assumes that the characteristic of $\F$ is not divisible by $d+1$ (Proposition~\ref{prop:permutoirrep}). 

Recall that $\lambda^{\square}$ was chosen such that
\[\mathbb{P}_{\lambda^{\square}\paren{N,i,d}}\paren{A^{\square}} = \frac{1}{2}\,.\]
By the above, it follows  that  $\mathbb{P}_{\lambda^{\square}\paren{N}}\paren{S^{\square}}>b_0 \paren{\frac{1}{2}}^{b_1}.$ $S^{\square}$ is increasing and invariant under the symmetry group of $\T^d$ so Friedgut and Kalai's theorem on sharpness of thresholds (Theorem~\ref{thm:FK}) implies that for any $\epsilon > 0,$ $\mathbb{P}_{\lambda^{\square}\paren{N}+\epsilon}\paren{S^{\square}}\rightarrow 1$ as $N\rightarrow\infty$.\looseness=-1

The proof of Theorem~\ref{thm:half} is then straightforward (Section~\ref{sec:half}).  By duality 
\[\mathbb{P}_{1/2}\paren{A^{\square}}=\mathbb{P}_{1/2}\paren{A^{\blacksquare}} \qquad \text{and} \qquad \mathbb{P}_{1/2}\paren{A^{\square}}+\mathbb{P}_{1/2}\paren{A^{\blacksquare}}\geq 1\,,\] so $\mathbb{P}_{1/2}\paren{A^{\square}}\geq \frac{1}{2}$ for all $N.$ It follows from the previous argument that $\mathbb{P}_p\paren{S^{\square}}\rightarrow 1$ for $p>1/2.$ On the other hand, if $p<1/2$ duality implies that 
\[\mathbb{P}_{p}\paren{A^{\square}} =1-\mathbb{P}_{p}\paren{S^{\blacksquare}}=1-\mathbb{P}_{1-p}\paren{S^{\square}}\rightarrow 0\,.\] 

Next, in Section~\ref{sec:duality} we study the relationship between duality and convergence. 
We show that $p^{\square}_c\paren{i,d}+q^{\square}_c\paren{d-i,d}=1$ by using Lemma~\ref{lemma:duality} and applying Theorem~\ref{thm:FK} to $A^{\square}$ above $p^{\square}_c\paren{d-i,d}$ and to $A^{\blacksquare}$ below $q^{\square}_c\paren{i,d}$ (Proposition~\ref{prop:duality}). It follows that the threshold for $A^{\square}$ converges if and only if $p^{\square}_c\paren{d-i,d}+p^{\square}_c\paren{i,d}=1$ (Corollary~\ref{cor:sharpness}).

In Section~\ref{sec:one} we show that homological percolation occurs at the critical threshold for bond percolation on $\Z^d$ by applying classical results on connection probabilities in the subcritical and supercritical regimes. This together with Lemma~\ref{lemma:duality} demonstrates Theorem~\ref{thm:one}. The proof does not use the results of Section~\ref{sec:surjective}, so we are able to drop the assumption on $\mathrm{char}\paren{\F}.$ 

Finally, in Section~\ref{sec:weak}, we complete the proof of Theorem~\ref{thm:weak} by showing the monotonicity property  $p^{\square}_c\paren{i,d} < p^{\square}_c\paren{i,d-1}< p^{\square}_c\paren{i+1,d}$ if $0<i<d-1,$ and corresponding result for the thresholds $q^{\square}_c$ (Proposition~\ref{prop:monotonicity}). This is done by comparing percolation on $\T_N^{d}$ with percolation on a subset homotopy equivalent to $\T^{d-1}.$ The proof of the corresponding result for permutohedral site percolation is different, but the overall idea is similar (Proposition~\ref{prop:monotonicity_permuto}).

\subsection{Examples}\label{sec:examples}
We begin by describing the homology of the cubical complex $\T_N^d.$ $\T_N^d$ is the product space $S^1\times \ldots S^1,$ where the $j$-th circle $S^1$ is the loop of length $N$ in the $j$-th coordinate direction passing through the origin. The homology of a product space is described by the K\"{u}nneth formula for homology (see Section 3b of~\cite{hatcher2002algebraic}). It tells us that products of these circles generate the homology of the torus, in the sense that a basis for $H_k\paren{\T^d_N; \F}$ is given by the $\binom{d}{k}$ (oriented) $k$-tori obtained from $\paren{S^1}^{k}\times \set{0}^{d-k}$ by permuting the coordinates. More formally, we can  write $H_k\paren{\T^d_N; \F}$ as the exterior product $\bigwedge^k H_1\paren{\T^d_N;\F}$: let $x_j$ denote a generator of the homology of the $j$-th coordinate circle of $\T_N^d.$ Then the the elements of the standard basis for $H_k\paren{\T^d_N; \F}$ are written as $x_{i_1} \wedge x_{i_2} \wedge \ldots \wedge x_{i_k},$ where $i_1<i_2\dots < i_k.$ By anticommutativity, transposing $x_{i_l}$ and $x_{i_m}$ reverses the orientation of the homology class.

For example, the giant cycles in Figure~\ref{fig:giant1} represent $x_1$ (it is homologous to the circle $S^1\times \set{0},$ when oriented appropriately) and $x_2$ (corresponding to $\set{0}\times S^1$) respectively. Note that, in both cases, the complement contains cycles in the same homology classes of the ambient torus. Similarly, the standard generators $x_1,x_2,x_3$ for $H^1\paren{\T^3;\F}$ are represented by the oriented circles $S^1\times\set{0}\times\set{0},$ $\set{0}\times S^1\times\set{0},$ and $\set{0}\times\set{0}\times S^1.$ On the other hand, when $i=2$ and $d=3,$ a basis is given by the homology classes corresponding to the  oriented two-tori $S^1\times S^1 \times\set{0}$ ($x_1x_2$) $S^1 \times \set{0}\times S^1,$ ($x_1\wedge x_3$) and $\set{0}\times S^1 \times S^1$ ($x_2 \wedge x_3$.) These appear as planes when depicted in the cube with periodic boundary conditions. The giant cycle in Figure~\ref{fig:sheet} is in the homology class $x_1 \wedge x_2.$ 

 \begin{figure}
    \centering
     \includegraphics[width=0.4\textwidth]{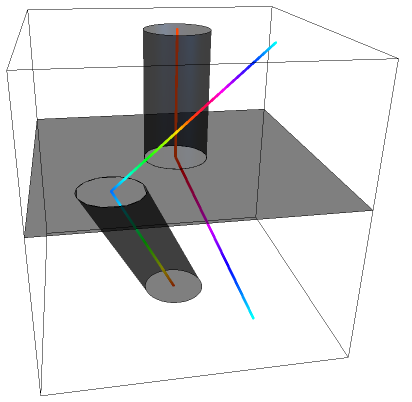}
     \qquad     
     \includegraphics[width=0.4\textwidth]{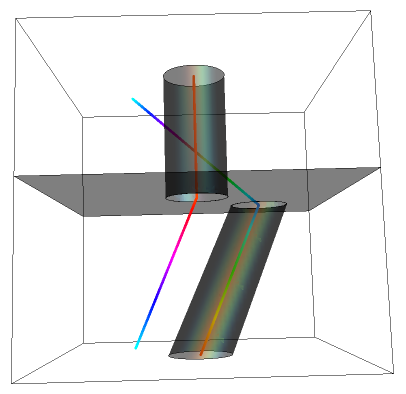}

         \caption{An embedding of $\T^2\# \R P^2$ into $\T^3,$ shown in gray from two different perspectives, with periodic boundary conditions. The rainbow path is in some sense dual to this embedded surface; note that it goes twice around $\T^3.$ See the text for more details.}
    \label{fig:CE}
\end{figure}

Consider the the configuration depicted in Figure~\ref{fig:critical}. Both $P$ and $P^{\bullet}$ include cycles homologous to $x_1+x_2$ in $H_1\paren{\T^2;\F}$ and we have $\rank\phi_*=\rank\psi_*=1.$ We can use this example to illustrate the importance of Lemma~\ref{lemma:spinning2} to our proof, and to explain why our arguments break down in characteristic $2.$ Let $B$ be the event that $x_1+x_2\in \im \phi_*.$ Then $B$ is symmetric to the event $B'=\set{g_1-g_2\in \im\phi_*}.$ When $\mathrm{char}\paren{\F}\neq 2,$ $\paren{x_1+x_2}+\paren{x_1-x_2}=2g_1\neq 0$ and $x_1\in \im\phi_*.$ This implies that $x_2$ is also in the image of $\phi_*.$ Thus $S=B\cap B'.$ That is, we can use a non-zero probability of the event $B$ to ``spin up'' basis for $H_1\paren{\T^2;\F}$ with non-zero probability. However, if $\mathrm{char}\paren{\F}=2$ then  $\paren{x_1+x_2}+\paren{x_1-x_2}=2x_1=0$ and the argument does not go through. This is related to the fact that $H_1\paren{\T^2;\F}$ is an irreducible representation of the symmetry group of $\T^2$ when $\mathrm{char}\paren{\F}\neq 2$ but it has the invariant subspace $\mathrm{span}\paren{x_1+x_2}$ when $\mathrm{char}\paren{\F}=2.$ 

The three-dimensional example in Figure~\ref{fig:CE} further illustrates the dependence of the events $A$ and $S$ on $\F.$ Here, $P$ is obtained by taking the ``plane'' $S^1\times S^1 \times \set{0}$ in $\T^3$, removing two disks, and attaching a tube to the resulting boundaries so that it wraps around the torus in the orthogonal direction. Then $P$ is a non-orientable surface of genus $2$ and is homeomorphic to the connected sum $\T^2\# \R P^2.$ When coefficients are taken in $\Z_2,$ $P$ represents the homology class $x_1 \wedge x_2$ (the difference between the chains corresponding to $P$ and  $S^1\times S^1 \times \set{0}$ is the boundary of the interior of the tube) and $\rank \im\phi_*=1$. On the other hand, the chain corresponding to $P$ is not a co-cycle in $C_2\paren{P;\Q};$ the plaquettes in the ``plane'' cannot be oriented so that their boundaries cancel out with both ends of the tube. Thus $H_2\paren{P;\Q}=0$ and $\im\phi_*=0$ if the coefficients are in $\Q.$ Now, consider the dual rainbow path in the figure. When oriented appropriately, this cycle represents $2x_3$ in $H_1\paren{P;\Q}$ but is $0$ in  $H_2\paren{P;\Z_2}.$ That is, $x_{3}\in\im \psi_*$ when coefficients are taken in $\Q$ but not $\Z_2.$ Observe that $x_{1}, x_{2}\in \im\psi_*$ regardless of the characteristic. In summary, the following events occur: $A\paren{\Z_2},$ $\neg A\paren{\Z},$ $\neg S^{\bullet}\paren{\Z_2}$ and $S^{\bullet}\paren{\Z},$ where we have indicated the coefficients in parentheses.

Finally, let us note that it is plausible that choosing $\F=\Z_2$ may lower the threshold for the event $A$, as non-orientable giant cycles could potentially appear before orientable ones. When $d=2i,$ these non-orientable cycles would have to wrap around the torus in multiple directions. Moreover, by duality, the standard generators of homology for the torus would appear at a strictly larger value of $p$ than when coefficients are taken in $\Q.$  While we have not ruled out this possibility, we would consider it surprising.

\section{Topological Results}
\label{sec:topological}

In this section, we discuss a duality lemma which will be useful in many of our arguments. The model for the plaquette version is Lemma 3.1 of~\cite{bobrowski2020homological}, which is formulated for the permutohedral lattice. We will use their notation which, for a subcomplex $X \subset {\bf T}^d_N,$ defines
$$\mathcal{B}_k\paren{X} \coloneqq \rank \varphi_*,$$
where $\varphi_*:H_k\paren{X} \to H_k\paren{{\bf T}^d_N}$ is the map on homology induced by inclusion.

We first need to show a preliminary result demonstrating a relationship between the complement of $P$ and $P^{\blacksquare},$ and to do so we make a short topological detour and recall the notion of homotopy (see Chapter 0 of~\cite{hatcher2002algebraic}). Let $X$ and $Y$ be two topological spaces and let $f,g : X \to Y$ be continuous functions. A \emph{homotopy} between $f$ and $g$ is a continuous function $I : X \times \brac{0,1} \to Y$ satisfying $I\paren{x,0} = f(x)$ and $I\paren{x,1} = g((x)$ for every $x \in X.$ Two such functions are called homotopic. We then say that $X$ and $Y$ are \emph{homotopy equivalent} and write $X \simeq Y$ if there are continuous functions $f : X \to Y$ and $H : Y \to X$ such that $f \circ h$ is homotopic to $\mathrm{id}_Y$ and $h \circ f$ is homotopic to $id_Y.$ A \emph{deformation retraction} of $X$ onto a subspace $A$ is a homotopy between a function $f : X \to A$ that fixes $A$ and $\mathrm{id}_A.$ Note that a deformation retraction always gives a homotopy equivalence.

For the purposes of this article, one can think of a homotopy equivalence as a weaker version of a homeomorphism that preserves information such as homology groups. We only work with homotopies in the following lemma and in Section~\ref{sec:permutohedra}.

\begin{Lemma}\label{lemma:deformationretract}
$\T^d\setminus P$ deformation retracts to $P^{\blacksquare}.$ 
\end{Lemma}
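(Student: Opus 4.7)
The plan is to construct an explicit deformation retraction via the barycentric subdivision $\mathrm{sd}(\T^d_N)$, using the standard identification of $(\T^d_N)^{\blacksquare}$ with the dual cell complex of $\T^d_N$: for each face $\sigma$ of $\T^d_N$, the dual cell $\sigma^{\blacksquare}$ is exactly the union of closed simplices of $\mathrm{sd}(\T^d_N)$ whose vertex chain has minimum element $\hat\sigma$. Each top-dimensional simplex $\Delta$ of $\mathrm{sd}(\T^d_N)$ is determined by a maximal flag $\sigma_0 \subset \sigma_1 \subset \cdots \subset \sigma_d$ with $\dim \sigma_j = j$, and its vertices are the barycenters $\hat\sigma_0, \ldots, \hat\sigma_d$.

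The first step is to compute how $P$ and $P^{\blacksquare}$ meet a single such $\Delta$. Because $P$ contains the entire $(i-1)$-skeleton of $\T^d_N$ and consists of cells of dimension at most $i$, we have $\sigma_j \subseteq P$ automatically when $j \le i-1$ and $\sigma_j \not\subseteq P$ automatically when $j \ge i+1$, while the case $j=i$ is governed by whether $\sigma_i$ is a selected plaquette. Let $k \in \{i, i+1\}$ be the smallest index with $\sigma_k \not\subseteq P$. A direct check using the dual-cell description then gives
\[
\Delta \cap P = [\hat\sigma_0, \ldots, \hat\sigma_{k-1}]
\quad\text{and}\quad
\Delta \cap P^{\blacksquare} = [\hat\sigma_k, \ldots, \hat\sigma_d].
\]

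Next, I would define the retraction locally on each $\Delta$: since $\Delta = [\hat\sigma_0, \ldots, \hat\sigma_{k-1}] \ast [\hat\sigma_k, \ldots, \hat\sigma_d]$ is a simplicial join, every point $x \in \Delta \setminus [\hat\sigma_0, \ldots, \hat\sigma_{k-1}]$ has a unique expression $x = \alpha\, x_{\text{bad}} + \beta\, x_{\text{good}}$ with $\beta > 0$, and the straight-line homotopy $H_t(x) = (1-t)\, x + t\, x_{\text{good}}$ is a strong deformation retraction of $\Delta \setminus (\Delta \cap P)$ onto $\Delta \cap P^{\blacksquare}$.

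Finally, I would glue these local retractions together. For any face $F$ of $\Delta$ shared with an adjacent top-simplex $\Delta'$, the partition of vertices of $F$ into bad and good depends only on whether the corresponding cells of $\T^d_N$ lie in $P$, not on the ambient top-simplex, so the retractions agree on $F$ and assemble into a global deformation retraction of $\T^d \setminus P$ onto $P^{\blacksquare}$. The main obstacle is the bookkeeping in the first step: one must verify that selected $i$-cells outside the flag contribute nothing to $\Delta \cap P$ beyond the $(i-1)$-skeleton piece $[\hat\sigma_0, \ldots, \hat\sigma_{i-1}]$, and that dual blocks $\sigma^{\blacksquare}$ for cells $\sigma$ outside the flag meet $\Delta$ only through faces already arising from cells of the flag. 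Both facts follow from the regularity of the cubical structure: since $\Delta \subseteq \sigma_d$, any cell $\tau$ not equal to a face of $\sigma_d$ meets $\Delta$ only in the common face $\tau \cap \sigma_d$, whose dimension is strictly less than $\dim \tau$.
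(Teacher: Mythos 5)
Your proposal is correct, but it takes a genuinely different route from the paper. The paper works directly with the cubical geometry: setting $\hat{S}_j = T^{(d-j)}_{\blacksquare}\setminus P$, it collapses $\T^d\setminus P=\hat{S}_0$ onto $P^{\blacksquare}$ by a composition of skeleton-by-skeleton retractions, each pushing the punctured dual $(d-j)$-cells radially from their centers to their boundaries; the key observation there is that radial projection from the center of a dual cube preserves membership in $P$, since $P$ meets that cube in coordinate hyperplanes through its center. You instead pass to the barycentric subdivision and, in effect, observe that $\mathrm{sd}(P)$ and $P^{\blacksquare}$ are the complementary full subcomplexes spanned by barycenters of cells contained in $P$ and of cells not contained in $P$, respectively: your flag computation $\Delta\cap P=[\hat\sigma_0,\ldots,\hat\sigma_{k-1}]$ and $\Delta\cap P^{\blacksquare}=[\hat\sigma_k,\ldots,\hat\sigma_d]$ is exactly this statement (using that $P$ contains the full $(i-1)$-skeleton and nothing above dimension $i$), and your join-line homotopy is the classical deformation retraction of the complement of a full subcomplex onto the complementary complex, as in standard proofs of Alexander duality. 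What your approach buys is a one-step homotopy and a transparent explanation of why $P^{\blacksquare}$ is the right "dual" object, at the cost of first justifying the identification of the shifted complex $(\T^d_N)^{\blacksquare}$ with the dual block complex (true for the cubical lattice, but it is a genuine verification). The paper's construction avoids subdivision machinery and is insensitive to small $N$; your write-up implicitly assumes the subdivision is simplicial and that distinct cells meet in a single common face, which fails for very small $N$ (e.g.\ $N\le 2$, where two cells may share several faces). This is only a technicality: your gluing and bookkeeping arguments survive if "the common face $\tau\cap\sigma_d$" is replaced by "a union of proper common faces, each lying in $P$ whenever $\tau$ does," or if one works equivariantly in the universal cover, so no essential gap remains.
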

\begin{proof}

Let $T^{(j)}$ and $T^{(j)}_{\blacksquare}$ denote the  $j$-skeletons of $\T^d_N$ and  $\paren{\T^d_N}^\blacksquare$, respectively, and let 
\[S_j=T^{(d-j)}_{\blacksquare}\setminus T^{(j)}\,.\]
Observe that  $S_j$ is obtained from $T^{(d-j)}_\blacksquare$ by removing the central point of each $(d-j)$-cell. Also, let
\[\hat{S}_j=T^{(d-j)}_{\blacksquare}\setminus P\,.\] 
We construct a deformation retraction from $\T^d\setminus P=\hat{S}_0$ to $P^{\blacksquare}$ by iteratively collapsing $\hat{S}_j$ to $\hat{S}_{j+1}$ for $j<i,$ then collapsing $\hat{S}_{i}$ to $P^{\blacksquare}.$ 

For an $j$-cell $\sigma$ of $T^{(j)}_\blacksquare$ with center point $q$ let 
\[f_{\sigma}:\sigma\setminus \set{q}\times\brac{0,1}\rightarrow\sigma\setminus\set{q}\]
be the deformation retraction from the punctured $j$-dimensional cube to its boundary along straight lines radiating from the center. Observe that the restriction of $f_\sigma$ to $\paren{\sigma\setminus P} \times \brac{0,1}$ defines a deformation retraction from $\sigma\setminus P$ to $\partial\sigma\setminus P$ (for $j>d-i$); this is because $\sigma$ intersects $P$ in hyperplanes spanned by the coordinate vectors based at $q.$ When projecting radially from $q,$ points inside $\sigma\cap P$ remain inside $\sigma\cap P$ and points outside of $\sigma\cap P$ remain outside of $\sigma\cap P.$

\begin{figure}[t]
    \centering
    \includegraphics[width=.5\textwidth]{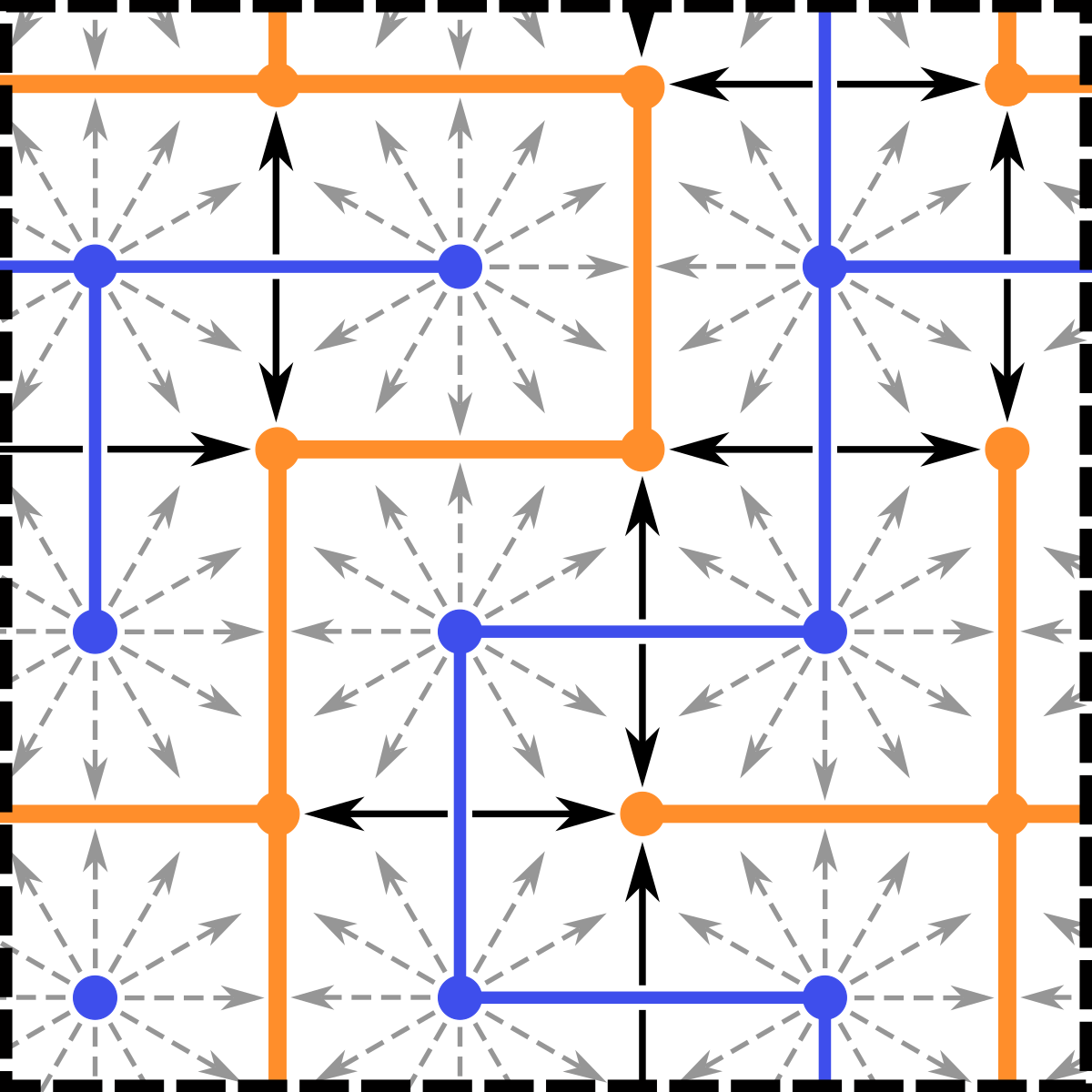}
    \caption{A part of the of the deformation retraction for the case $N=3$, $d=2,i=1.$ $P$ is shown in blue and $P^{\blacksquare}$ in orange. $\T^d\setminus P$ is first retracted to $T^{(1)}_\blacksquare\setminus P$ via the dashed gray arrows radiating from each vertex of $P,$ then to $P^{\blacksquare}$ by the solid black arrows radiating from the midpoints of the edges of $P.$}
\end{figure}

For $x\in S_j,$ let $\sigma\paren{x}$ be the unique  $(d-j)$-cell of $(\T^d_N)^\blacksquare$ that contains $x$ in its interior. Define $G_j:S_j\times\brac{0,1}\rightarrow S_j$ by 
\[G_j\paren{x,t}=\begin{cases}
f_{\sigma\paren{x}}\paren{x,t} & x\in S_j\setminus T^{d-j-1}_{\blacksquare}\\ x & \text{otherwise}\,.\end{cases}\]
$G_j$ collapses $S_j$ to $T^{(d-j-1)}$ by retracting the punctured $(d-j)$-cells to their boundaries. It follows from the discussion in the previous paragraph that the restriction of $G_j$ to $\hat{S}_j\times\brac{0,1}$ defines a deformation retraction from $\hat{S}_j$ to  $\hat{S}_{j+1}.$ 

Similarly, define $H:\hat{S}_{i}\times\brac{0,1}\rightarrow \hat{S}_{i}$ by 
\[H\paren{x,t}=\begin{cases}
f_{\sigma\paren{x}}\paren{x,t} & x\in \hat{S}_i\setminus P^{\blacksquare}\\ x & \text{otherwise}\,.\end{cases}\]
That is, $H$ collapses the $(d-i)$-faces of $T^{(d-i)}_{\blacksquare}$ that are punctured by $i$-faces of $P$ to deformation retract $\hat{S}_i$ to  $P^{\blacksquare}.$

In summary, we can deformation retract $\T^d\setminus P$ to $P^{\blacksquare}$ via the function $F:\T^d\setminus P\times\brac{0,i}\rightarrow T^d\setminus P$ defined by
\[F\paren{x,t}=\begin{cases}
G_0\paren{x,t} & t\in\left[0,1\right]\\
G_j\paren{F\paren{x,j},t-j}  & t\in\left(j,j+1\right], 0<j<i\\
H\paren{F\paren{x,i},t-i} & t\in\left(i,i+1\right]\,.
\end{cases}
\]
\end{proof}

In fact, the same deformation retraction works when $P$ is slightly thickened, which will be useful for the next Lemma. Let $P_\epsilon$ denote the $\epsilon$-neighborhood
\[P_{\epsilon}= \{x \in \T^d_N : d(x,P) \leq \epsilon\}\,.\]

\begin{Corollary}\label{corollary:epsilonretract}
For any $0 < \epsilon < 1/2,$ the closure $\overline{\paren{\T^d \setminus P_\epsilon}}$ deformation retracts to $P^{\blacksquare}.$
\end{Corollary}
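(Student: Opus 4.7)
The plan is to show that the deformation retraction $F$ constructed in the proof of Lemma~\ref{lemma:deformationretract} restricts to a deformation retraction $\overline{\T^d \setminus P_\epsilon} \times [0,1] \to \overline{\T^d \setminus P_\epsilon}$ with image eventually in $P^{\blacksquare}$. Because $P^{\blacksquare}$ is contained in the shifted complex $(\T^d_N)^{\blacksquare}$ and every point of $P^{\blacksquare}$ lies at distance at least $1/2$ from $P$, the hypothesis $\epsilon < 1/2$ gives $P^{\blacksquare}\cap P_\epsilon=\emptyset$, so $P^{\blacksquare}\subseteq\overline{\T^d\setminus P_\epsilon}$. It then suffices to check that $F$ never carries a point with $d(\cdot,P)\geq\epsilon$ to a point with $d(\cdot,P)<\epsilon$.

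I would carry this out by working stage-by-stage through the construction in Lemma~\ref{lemma:deformationretract}. Each stage is a map of the form $f_\sigma$, where $\sigma$ is a dual cell with center $q$ and $f_\sigma$ slides each point $x\in\sigma\setminus\{q\}$ along the ray from $q$ through $x$ out to $\partial\sigma$. The proof of the lemma established that $P\cap\overline{\sigma}$ consists of portions of coordinate hyperplanes through $q$. For any such hyperplane $H\ni q$ with unit normal $n$, one has $d(q+s(x-q),H)=s|\langle x-q,n\rangle|=s\,d(x,H)$, so the distance to $H$ is non-decreasing under outward radial motion from $q$; hence $d(x,P\cap\overline{\sigma})\leq d(F(x,t),P\cap\overline{\sigma})$ for all $t$.

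The remaining technical point, which is the main obstacle, is that the radial projection within $\sigma$ might in principle bring the moving point closer to portions of $P$ outside $\overline{\sigma}$ (in neighboring dual cells). Here I would use that every cell of $P$ is coordinate-aligned and that where $P$ enters a neighboring dual cell $\sigma'$ through a shared face $\sigma\cap\sigma'$, it does so transversally to that face and extends along the coordinate rays from the center $q'$ of $\sigma'$. Combined with $\epsilon<1/2$, this lets one check locally at $\partial\sigma$ that if $F(x,t)\in\overline{\sigma}$ is within distance $\epsilon$ of some $p\in P\setminus\overline{\sigma}$, then $p$ lies in a cell whose closure meets $\overline{\sigma}$, and a short case analysis shows $d(x,p)$ was already less than $\epsilon$, contradicting $x\in\overline{\T^d\setminus P_\epsilon}$. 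This verification, done for each of the retractions $G_j$ and the final $H$, promotes the lemma to a deformation retraction of $\overline{\T^d\setminus P_\epsilon}$ onto $P^{\blacksquare}$.
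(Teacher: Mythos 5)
Your proposal is correct and takes essentially the same approach as the paper: restrict the retraction of Lemma~\ref{lemma:deformationretract} and check that each radial map $f_\sigma$ never moves a point from outside $P_\epsilon$ to inside it, with $\epsilon<1/2$ ensuring that cells of $P$ not meeting $\sigma$ (which lie at distance at least $1/2$ from $\sigma$) cannot interfere. One small tightening: the monotonicity you need is of the distance to the truncated pieces $P\cap\overline{\sigma}$ rather than to the full hyperplanes through $q$; this holds because each such piece is convex and contains $q$, so outward radial scaling from $q$ does not decrease the distance to it.
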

\begin{proof}
Consider the deformation retraction as in Lemma~\ref{lemma:deformationretract} restricted to $\overline{\paren{\T^d \setminus P_\epsilon}}.$ When a punctured $j$-cell $\sigma$ is retracted via $f_\sigma,$ the property that points outside of $\sigma \cap P_\epsilon$ remain outside of $\sigma \cap P_\epsilon$ is preserved even though $\sigma \cap P_\epsilon$ now is a union of thickened hyperplanes. The deformation retractions $G_j$ and $H$ are defined by collapsing different cells via the functions $f_\sigma,$ so the restricted retraction does not pass through $P_\epsilon.$
\end{proof}

The next result is a key topological tool we use in many of our arguments. It is very similar to results of~\cite{bobrowski2020homological} and~\cite{bobrowski2020homological2} for other models of percolation on the torus including Lemma~\ref{lemma:permutoduality} below. For convenience, let 
\[D=\rank H_i\paren{\T^d}=\binom{d}{i}\,.\]

\begin{Lemma}[Duality Lemma]
\label{lemma:duality}
$\rank \phi_*+\rank\psi_*=D.$ In particular, at least one of the events $A^{\square}$ and $A^{\blacksquare}$ occurs, $S^{\blacksquare} \iff Z^{\square},$ and  $Z^{\blacksquare}\iff S^{\square}.$
\end{Lemma}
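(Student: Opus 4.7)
The plan is to prove the rank identity $\rank\phi_* + \rank\psi_* = D$ by implementing Poincar\'e duality at the cellular chain level, exploiting the natural correspondence between $i$-cells of $\T^d_N$ and $(d-i)$-cells of its shifted dual $(\T^d_N)^{\blacksquare}$. The three ``in particular'' consequences then follow immediately from pure linear algebra: since $D = \binom{d}{i} \ge 1$, at least one of $\rank\phi_*, \rank\psi_*$ is positive, and $\rank\phi_* = D$ if and only if $\rank\psi_* = 0$.

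First I would set up the chain--cochain isomorphism coming from the dual cell structure,
\[
C_k\paren{\T^d_N;F} \xrightarrow{\sim} C^{d-k}\paren{(\T^d_N)^{\blacksquare};F},
\]
sending an oriented $k$-cell $e$ to the elementary cochain dual to $e^*$. With appropriate sign conventions this intertwines $\partial$ with $\delta$ and induces the Poincar\'e duality isomorphism $H_k(\T^d;F) \cong H^{d-k}(\T^d;F)$ on (co)homology. By the very definition of $P^{\blacksquare}$, an $i$-cell $e$ of $\T^d_N$ lies in $P$ if and only if $e^*$ does \emph{not} lie in $P^{\blacksquare}$, so this isomorphism identifies $C_i(P)$ with the annihilator $\mathrm{Ann}\paren{C_{d-i}(P^{\blacksquare})}$ inside $C^{d-i}((\T^d_N)^{\blacksquare})$ under the evaluation pairing.

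Next I would show that, under the resulting isomorphism $H_i(\T^d;F) \cong H_{d-i}(\T^d;F)^*$ coming from Poincar\'e duality and universal coefficients, the subspace $\im\phi_*$ corresponds exactly to the annihilator $(\im\psi_*)^{\perp}$. One inclusion is immediate: a cocycle lying in $\mathrm{Ann}(C_{d-i}(P^{\blacksquare}))$ in particular evaluates to zero on cycles in $C_{d-i}(P^{\blacksquare})$, so its class annihilates $\im\psi_*$. For the reverse, given a cocycle $\alpha$ whose class vanishes on $\im\psi_*$, I would correct $\alpha$ by a coboundary $\delta\beta$ so that $(\alpha+\delta\beta)(c) = 0$ for every $c \in C_{d-i}(P^{\blacksquare})$; this amounts to solving $\beta(\partial c) = -\alpha(c)$, which defines $\beta$ consistently on $\partial C_{d-i}(P^{\blacksquare})$ precisely because $\alpha$ vanishes on every $(d-i)$-cycle of $(\T^d_N)^{\blacksquare}$ supported in $P^{\blacksquare}$ (these are exactly the representatives of elements of $\im\psi_*$), and any such partial $\beta$ extends freely to $C^{d-i-1}((\T^d_N)^{\blacksquare})$ by elementary linear algebra. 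Non-degeneracy of the Poincar\'e pairing then yields $\dim\im\phi_* + \dim\im\psi_* = D$.

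The hard part will be the reverse inclusion just sketched: one must confirm that the abstract cochain correction actually produces, via the dual cell identification, a legitimate cellular $i$-cycle supported in $P$ which represents the prescribed class. A slicker way to see the easy inclusion is geometric: cellular $i$-cycles in $P$ and $(d-i)$-cycles in $P^{\blacksquare}$ have zero algebraic intersection number in $\T^d$ (because an $i$-cell of $P$ and the dual of a $(d-i)$-cell of $P^{\blacksquare}$ are never equal), so the intersection pairing immediately gives $\im\phi_*\subseteq(\im\psi_*)^{\perp}$; the cochain argument is where the matching dimension count comes from. Once the rank identity is in hand, the probabilistic consequences are formal: $A^{\square}\cup A^{\blacksquare}$ must occur since $D\ge 1$, and $S^{\square} \iff \rank\phi_* = D \iff \rank\psi_* = 0 \iff Z^{\blacksquare}$, with $Z^{\square} \iff S^{\blacksquare}$ by the mirror argument.
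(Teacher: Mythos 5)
Your proposal is correct, but it takes a genuinely different route from the paper. The paper argues at the space level: it thickens $P$ to $P_\epsilon$, invokes Lemma~\ref{lemma:deformationretract} and Corollary~\ref{corollary:epsilonretract} to replace $\overline{\T^d_N\setminus P_\epsilon}$ by $P^{\blacksquare}$ up to homotopy, and then compares the homology long exact sequence of the pair $\paren{\T^d_N,P_\epsilon}$ with the cohomology long exact sequence of $\paren{\T^d_N,P_\epsilon^c}$ via Lefschetz and Poincar\'e duality and the five lemma; a diagram chase gives $H_i\paren{\T^d_N}\cong \im i_*\oplus\im j^*$, and field coefficients turn $\rank j^*$ into $\rank\psi_*$. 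You instead stay entirely at the cellular chain level: the dual-cell isomorphism $C_i\paren{\T^d_N}\cong C^{d-i}\paren{(\T^d_N)^{\blacksquare}}$ identifies $C_i\paren{P}$ with the annihilator of $C_{d-i}\paren{P^{\blacksquare}}$, and your coboundary-correction step (well defined exactly because the given class kills $\im\psi_*$, and extendable since you work over a field) shows that $\im\phi_*$ is precisely the Poincar\'e dual of $\paren{\im\psi_*}^{\perp}$, from which the rank identity follows by counting dimensions. What your approach buys is a self-contained, purely linear-algebraic proof that bypasses the deformation retraction and all thickened neighborhoods, since the combinatorial complementarity of $P$ and $P^{\blacksquare}$ does the work; what it costs is that it is tied to the existence of the dual cell structure, whereas the paper's space-level argument is the one that parallels Bobrowski and Skraba's Lemma~\ref{lemma:permutoduality} for the permutohedral model, where no equally clean chain-level pairing is written down. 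In a full write-up you should make explicit the standard facts that the dual-cell isomorphism intertwines $\partial$ with $\delta$ up to sign and realizes Poincar\'e duality on the torus, and that cellular homology of the subcomplexes $P$ and $P^{\blacksquare}$ computes the singular homology used to define $\phi_*$ and $\psi_*$; the probabilistic consequences are then handled the same way in both proofs.
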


\begin{proof}
We proceed in similar fashion to Bobrowski and Skraba's proof of Lemma~\ref{lemma:permutoduality}. It would be impractical to attempt to give a fully self-contained proof, so we instead provide references to all of the technical tools required. Let $\epsilon = 1/4$ and define $P_\epsilon^c \coloneqq \overline{\paren{\T^d_N \setminus P_\epsilon}}.$ Consider the commutative diagram
\[\begin{tikzcd}[cramped, sep=small]
H_i\paren{P_\epsilon} \arrow{r}{i_*}  & H_i\paren{\T^d_N} \arrow{r} & H_i\paren{\T^d_N,P_\epsilon} \arrow{r}{\delta_i} & H_{i-1}\paren{P_\epsilon}\\%
H^{d-i}\paren{\T^d_N,P_\epsilon^c} \arrow{r} \arrow[swap]{u}{\cong} & H^{d-i}\paren{\T^d_N} \arrow{r}{j^*} \arrow[swap]{u}{\cong} & H^{d-i}\paren{P_\epsilon^c} \arrow{r}{\delta^{d-i}} \arrow[swap]{u}{\cong} & H^{d-i+1}\paren{\T^d_N,P_\epsilon^c} \arrow[swap]{u}{\cong}
\end{tikzcd}
\]

where $i$ and $j$ are the inclusions of $P_\epsilon$ and $P^c_\epsilon$ respectively into $\T_N^d$.  The group $H_k\paren{X,Y}$ for a subspace $Y \subset X$ is called a relative homology group, which we will not define here but is described in Section 2.1 of~\cite{hatcher2002algebraic}. When $X$ and $Y$ are sufficiently nice (as is the case here), $H_k\paren{X,Y}$ is the homology of the quotient space $X/Y$ obtained by identifying all points of $Y.$ Also, the groups $H^k\paren{X}$ and $H^k\paren{X,Y}$ denote cohomology and relative cohomology groups, respectively. Since we are using field coefficients, these are just the dual vector spaces $\mathrm{Hom}\paren{H^k\paren{X},\mathbb{F}}$ and $\mathrm{Hom}\paren{H^k\paren{X,Y},\mathbb{F}}.$

Each row of the diagram is a part of the long exact sequence for a pair $(X,Y),$ a key tool relating the homology groups of a subspace with those of the ambient space. Here ``exact'' means that the image of each map is the kernel of the next. This sequence is also covered in  Section 2.1 of~\cite{hatcher2002algebraic}. In order to see why the vertical maps are isomorphisms, we use classical tools from algebraic topology that relate the $k$-th homology to the $(d-k)$-th cohomology in a $d$-dimensional manifold. The isomorphisms $H_i\paren{\T^d} \cong H^{d-i}\paren{\T^d}$ and $H_{i-1}\paren{\T^d} \cong H^{d-i+1}\paren{\T^d}$ are from Poincar\'{e} Duality, a fundamental symmetry of the (co)homology of a closed manifold (Theorem 3.30 of~\cite{hatcher2002algebraic}). The isomorphisms $H_i\paren{P_{\epsilon}} \cong H^{d-i}\paren{\T^d_N,P_{\epsilon}^c}$ and $H_{i-1}\paren{P_{\epsilon}} \cong H^{d-i+1}\paren{\T^d_N,P_{\epsilon}^c}$ are from a generalization of Poincar\'{e} duality to manifolds with boundary called Lefschetz duality (Theorem 3.43 in~\cite{hatcher2002algebraic}). Lastly, whenever we have a commutative diagram with each row being exact and four vertical isomorphisms arranged as above, a general fact about commutative algebra called the Five Lemma (found at the end of Section 2.1 of~\cite{hatcher2002algebraic}) implies that the final vertical map is also an isomorphism.

Since the rows come from exact sequences, one can verify from a diagram chase that \[H_i(\T^d_N) \cong \im i_* \oplus \im j^*\,.\] Furthermore, since we are considering homology with field coefficients, $\rank j^* = \rank j_*.$ Now by Corollary~\ref{corollary:epsilonretract}, $\overline{\paren{\T^d_N \setminus P_\epsilon}}$ retracts to $P^{\blacksquare},$ and $P_\epsilon$ clearly retracts to $P,$ so $\rank \phi_* = \rank i_*$ and $\rank \psi_* = \rank j^*.$ Putting these together gives $\rank \phi_* + \rank \psi_* = D.$
\end{proof}

\section{Surjectivity} 
\label{sec:surjective}
The goal of this section is to show that if $p>p^{\square}_c$ then $\mathbb{P}_p\paren{S^{\square}}\rightarrow 1$ as $N\rightarrow\infty.$
First, we will prove that $\mathbb{P}_p\paren{S^{\square}}\geq b_0 \mathbb{P}_p\paren{A^{\square}}^{b_1}$ for some $b_0,b_1>0$ that do not depend on $N.$ 

Recall that a vector space $V$ that is acted on by a group $G$ is called an \emph{irreducible representation} of $G$ if it has no proper, non-trivial $G$-invariant subspaces. That is, the only subspaces $W$ of $V$ so that $\set{gw:g\in G,w\in W}=W$ are $\set{0}$ and $V.$

\begin{Lemma}\label{lemma:spinning2}
Let $V$ be a finite dimensional vector space and $Y$ be a set. Let $\mathcal{A}$ be the lattice of subspaces of $V.$ Suppose $f : \mathcal{P}\paren{Y} \to \mathcal{A}$ is an increasing function, i.e. if $A \subset B$ then $f\paren{A} \subset f\paren{B}.$ Let $G$ be a finite group which acts on both $Y$ and $V$ whose action is compatible with $f.$ That is, for each $g \in G$ and $D\in \mathcal{P}\paren{Y}$  $g\paren{f\paren{D}} = f\paren{gD}.$ Let $X$ be a $\mathcal{P}\paren{Y}$-valued random variable with a $G$-invariant distribution that is positively associated, meaning that increasing events with respect to $X$ are non-negatively correlated. Then if $V$ is an irreducible representation of $G$, there are positive constants $C_0, C_1$ so that  $$\mathbb{P}_p\paren{f(X) = V} \geq C_0\mathbb{P}_p\paren{f(X) \neq 0}^{C_1}\,,$$ where $C_0$ only depends on $G$ and $C_1$ only depends on $\dim V.$
\end{Lemma}

\begin{proof}
Let $A_k = \set{D\in\mathcal{P}\paren{Y}:\rank f(D) \geq k}$ and $\mathcal{W}_k=f(A_k).$ For a subspace $W$ of $V$ let $\mathrm{Stab}(W)$ denote the stabilizer of $W,$  
\[\set{g \in G : gW = W}\,,\]
and for $H \leq G$ let $$L_k(H) = \set{D:\mathrm{Stab}(f(D)) = H} \cap A_k\,.$$ Then in particular, since $A_k = \sqcup_{H \leq G} L_k(H),$ there is a subgroup $H'$ of $G$ so that
\begin{equation}
\label{eq:spin0}
\mathbb{P}_p\paren{X \in L_k(H')} \geq \frac{1}{c_G} \mathbb{P}_p\paren{X \in A_k}\,,
\end{equation}
where $c_G$ is the number of subgroups of $G.$

If $H' = G$ then 
$$L_k(H')=L_k(G)=\set{D:f(D)=V}$$
because $V$ is an irreducible representation of $G,$ and it follows that
\begin{equation}
\label{eq:spin1}
\mathbb{P}_p\paren{f(X) = V} = \mathbb{P}_p\paren{X \in L_k(H')} \geq \frac{1}{c_G} \mathbb{P}_p\paren{X \in A_k}\,.
\end{equation}
Otherwise, if $\mathrm{Stab}(W) = H'$ then the orbit $\set{g W : g\in G}$ contains $\abs{G}/\abs{H'}$ elements, where the elements of each coset of $H'$ in $G$ have the same action on $W.$ Let $\mathcal{B}$ be a collection of subspaces of $V$ that contains one element from each orbit of $\set{W \in \mathcal{W}_k: \mathrm{Stab}(W) = H'}$ so
$$f(L_k(H'))= \bigsqcup_{g \in G/H'} g\mathcal{B}.$$

Taking $B \coloneqq \set{D:f(D) \in \mathcal{B}},$ we have that
$$L_k(H') = \bigsqcup_{g \in G/H'} gB.$$
Let $g\in G\setminus H'.$  The events $B$ and $gB$ are symmetric so
$$\mathbb{P}_p\paren{X \in B}=\mathbb{P}_p\paren{X \in gB}=\frac{\abs{H'}}{\abs{G}}\mathbb{P}_p\paren{L_k(H'))}\geq \frac{1}{c_G \abs{G}}\mathbb{P}_p\paren{X \in A_k}$$
using Equation~\ref{eq:spin0}. By construction, $gB \cap B \subseteq A_{k+1}$ and the positive association property of $X$ yields

\begin{align*}
    \mathbb{P}_p\paren{X \in A_{k+1}} &\geq \mathbb{P}_p\paren{X \in B \cap g B}\\
    &\geq \mathbb{P}_p\paren{X \in B}^2\\
    &\geq \paren{\frac{1}{c_G \abs{G}}\mathbb{P}_p\paren{X \in A_k}}^2\,.
\end{align*}
Since either the preceding equation or Equation~\ref{eq:spin1} holds for all $k,$ we can conclude that there are positive constants $C_0(G,V)$ and $C_1(V)$ so that
\begin{align*}
    \mathbb{P}_p\paren{f(X) = V} = \mathbb{P}_p\paren{X \in A_{\dim V}} &\geq  C_0\mathbb{P}_p\paren{X \in A_1}^{2^{\dim V - 1}} \\&= C_0\mathbb{P}_p\paren{f(X) \neq 0}^{C_1}\,.
\end{align*}
\end{proof}

Now it suffices to check the irreducibility of the homology of the torus as a representation of the point symmetry group of the cubical complex structure, which we recall is the group of symmetries that fix the origin. The point symmetry group on the $\T_N^d$ is the same as the point symmetry group of $\Z^d.$ It is the subgroup of $O(N)$ generated by the permutations of the coordinate directions and reflections which reverse a coordinate direction. This is known as the hyperoctahedral group and can be written as $W_d = S_2 \wr S_d,$ where $\wr$ denotes the wreath product.

\begin{Proposition}\label{prop:Zdirrep}
Let $\F$ be a field, $d>0,$ and $1\leq i\leq d-1.$ $H_i(\T^d_N;\F)$ is an irreducible representation of $W_d$ if and only if $\mathrm{char}\paren{\F} \neq 2.$
\end{Proposition}

\begin{proof} 
 Recall from Section~\ref{sec:examples} that $H_k\paren{\T^d_N; \F}$ is isomorphic to the exterior product $\bigwedge^k H_1\paren{\T^d_N;\F}.$ The action of the symmetry group $W_d$ on $H_k\paren{\T^d_N; \F}$ is induced by its action on  $H_1\paren{\T^d_N; \F}.$ That is, for $v_1\wedge \ldots \wedge v_k\in  \bigwedge^k H_1\paren{\T^d_N;\F}$ and $g\in W_d,$ we have that 
  \[g \paren{v_1\wedge\ldots \wedge v_k} = g v_1\wedge\ldots\wedge g v_k.\]
 In representation theory, this is simply called the $k$-th exterior power of the $W_d$-representation $H_1\paren{\T^d_N;\F} \cong \F^{d}.$ We will give a direct proof of irreducibility  by showing that if $w\in\bigwedge^i \F^d\setminus\set{\vec{0}}$ then $\langle W_d w \rangle= \langle \bigwedge^i \F^d\rangle.$

Let $w$ be an arbitrary non-zero element of $\bigwedge^i \F^d\setminus\set{\vec{0}}.$ By dividing by the leading coefficient if necessary we may write $$w = x_{l_{1,1}}\wedge \ldots \wedge x_{l_{1,i}} + \ldots + c_m x_{l_{m,1}}\wedge \ldots \wedge x_{l_{m,i}}\,.$$
 Let $\sigma_v \in S_d \leq W_d$ be a permutation so that $$\sigma_w\paren{x_{l_{1,1}}\wedge \ldots \wedge x_{l_{1,i}}} = x_1\wedge x_2\wedge \ldots \wedge x_i.$$ For each $1 \leq k \leq d,$ let $\rho_k \in W_d$ be the reflection about the hyperplane orthogonal to $x_k,$ and let $f_k(v) \coloneqq v + \rho_k(v)$ for $v \in \bigwedge^i \F^d.$ Then 
$$f_{i+1}\paren{f_{i+2}\paren{\ldots f_d\paren{\sigma_w\paren{w}}}} = 2^{d-i} x_1\wedge x_2\wedge \ldots \wedge x_i.$$
$\mathrm{char}\paren{\F} \neq 2$ so  $2^{d-i}\neq 0$ and thus $x_1\wedge x_2\wedge \ldots \wedge x_i\in \langle W_d w \rangle.$ But then using the action of $S_d \leq W_d$ on $x_1\wedge x_2\wedge \ldots \wedge x_i,$  we can obtain a basis for $\bigwedge^i \F^d.$ Thus  $\langle W_d w \rangle=\bigwedge^i \F^d$ for any non-zero $w$ and the action of $W_d$ is irreducible.
\end{proof}

We can combine Lemma~\ref{lemma:spinning2} with the preceding propositions to obtain the following corollary.
\begin{Corollary}\label{cor:bothsurjective}
Then there are constants $C_0,C_1>0$ not depending on $N,i$ such that
\[\mathbb{P}_p\paren{S^{\square}}\geq C_0\mathbb{P}_p\paren{A^{\square}}^{C_1}\,.\]
\end{Corollary}

It is worth noting that Lemma~\ref{lemma:spinning2} is more general than some of our other tools. For example, in the case of continuum percolation studied in~\cite{bobrowski2020homological2}, this Lemma can be used to show the analogue of Corollary~\ref{cor:bothsurjective}, even in the absence of stronger duality results.  

\begin{Proposition}
\label{prop:sharp2}
Let $\set{Y_N}_{N\in\mathbb{N}}$ be a sequence of finite sets with $\abs{Y_N}\rightarrow\infty,$ each of which has a transitive action by a symmetry group $H_N.$  Also, let $R\paren{N,p}$ be the random set obtained by including each element of $Y_N$ independently with probability $p,$ and suppose there are functions $f_N:\mathcal{P}\paren{Y_N}\rightarrow V$ which satisfy the hypotheses of Lemma~\ref{lemma:spinning2} for some fixed symmetry group $G$. Assume that $G$ is a subgroup of $H_N$ for all $N$ and that the action of $H_N/G$ on $V$ is trivial. If $f_N\paren{\varnothing}=0$ and $f_N\paren{Y_N}\neq 0$ for all sufficiently large $N$ then there exists a threshold function $\lambda\paren{N}$ so that for any $\epsilon>0$
\begin{align*}
\mathbb{P}\paren{f_N\paren{R\paren{N,\lambda\paren{N}-\epsilon}}=0} & \rightarrow 1\\
\mathbb{P}\paren{f_N\paren{R\paren{N,\lambda\paren{N}+\epsilon}}=V} & \rightarrow 1 \\
\end{align*}
as $N\to\infty.$
\end{Proposition}
\begin{proof}
For a fixed value of $N,$ $\mathbb{P}\paren{f_N\paren{R\paren{N,p}} \neq 0}$ is an increasing, continuous function of $p$ with 
\[\mathbb{P}\paren{f_N\paren{R\paren{N,0}} \neq 0} =0\]
and 
\[\mathbb{P}\paren{f_N\paren{R\paren{N,1}} \neq 0} =1\]
for all sufficiently large $N.$ By the intermediate value theorem we can choose $\lambda\paren{N}$ so that for all sufficiently large $N,$
\[\mathbb{P}\paren{f_N\paren{R\paren{N,\lambda\paren{N}}}\neq 0} =1/2\,.\]
Then by Lemma~\ref{lemma:spinning2}, there exist $C_0,C_1>0$ such that 
\begin{align*}
\mathbb{P}\paren{f_N\paren{R\paren{N,\lambda\paren{N}}} = V} &\geq C_0\mathbb{P}\paren{f_N\paren{R\paren{N,\lambda\paren{N}}} \neq 0}^{C_1}\\
&= \frac{C_0}{2^{C_1}} >0\,.
\end{align*}
Choose an $\epsilon_0$ between $0$ and $\frac{C_0}{2^{C_1}}.$ Note that the event 
\[\set{f_N\paren{R\paren{N,p}} = V}\]
is increasing in $p$ and invariant under the action of $H_N.$ By assumption, $H_N$ acts transitively on $X,$ so the hypotheses of Theorem~\ref{thm:FK} are met. Let $\epsilon>0.$ Re-arranging Equation~\ref{eqn:FK} gives that \[\mathbb{P}\paren{{f_N\paren{R\paren{N,\lambda\paren{N}+\epsilon}} = V}}>1-\delta\]
when
\[\log\paren{\abs{Y_N}}>\frac{\rho\log\paren{1/\paren{2\delta}}}{\epsilon}\,.\]

On the other hand, the event $\set{f\paren{R\paren{N,p}^c}=0}$ is also increasing, so by a similar argument,
$\mathbb{P}\paren{f_N\paren{R\paren{N,\lambda\paren{N}-\epsilon}}=0}\rightarrow 1\,.$
\end{proof}

In our models of interest, this tells us that $\lambda^{\square}\paren{N}$ is a sharp threshold functions of $N$ for the appearance of all giant cycles. We can then describe the behavior of both models below $q_c$ and above $p_c.$

\begin{Corollary}\label{cor:sharp}
If $p>p^{\square}_c\paren{i,d}$
then
\[\mathbb{P}_{p}\paren{S^{\square}}\rightarrow 1\,,\]
and if $p<q^{\square}_c\paren{i,d}$
\[\mathbb{P}_{p}\paren{A^{\square}}\rightarrow 0\,.\]
\end{Corollary}

\section{The Case $d=2i$}
\label{sec:half}
We now prove Theorem~\ref{thm:half}, that $p^{\square}_c\paren{i,2i}=1/2$ is a sharp threshold for $A^{\square}$ when $d=2i.$

\begin{proof}[Proof of Theorem~\ref{thm:half}]
Half-dimensional plaquette percolation is self-dual so $\mathbb{P}_{1/2}\paren{A^{\square}}=\mathbb{P}_{1/2}\paren{A^{\blacksquare}}.$ By Lemma~\ref{lemma:duality} at least one of the events $A^{\square}$ and $A^{\blacksquare}$ must occur. Therefore,
\[2\mathbb{P}_{1/2}\paren{A^{\square}}=\mathbb{P}_{1/2}\paren{A^{\square}}+\mathbb{P}_{1/2}\paren{A^{\blacksquare}}\geq 1\]
and 
\[\mathbb{P}_{1/2}\paren{A^{\square}}\geq 1/2\]
for all $N.$ It follows that $p^{\square}_c\leq 1/2.$ Thus, if $p>1/2$ then
\[\mathbb{P}_p\paren{S^{\square}}\rightarrow 1\]
as $N\rightarrow\infty,$ and if $p<1/2$ then 
\[\mathbb{P}_p\paren{A^{\square}}\rightarrow 0\]
as $N\rightarrow\infty$ by Corollary~\ref{cor:sharp}.
\end{proof}

\section{Sharpness and Duality}
\label{sec:duality}

In this section, we combine the Duality Lemma (Lemma~\ref{lemma:duality}) with Corollary~\ref{cor:sharp} to examine the behavior of $\mathbb{P}_p\paren{A^{\square}}$ below $q^{\square}_c\paren{i,d}$ and above $p^{\square}_c\paren{i,d}.$ We also relate these thresholds to $p^{\square}_c\paren{d-i,d}$ and $q^{\square}_c\paren{d-i,d}.$ 

Now we show a partial duality result for any $i$ and $d.$
\begin{Proposition}
\label{prop:duality}
\[p^{\square}_c\paren{i,d}+q^{\square}_c\paren{d-i,d}=1\,.\]
\end{Proposition}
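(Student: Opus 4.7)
The plan is to deduce the duality identity from the two symmetries already at our disposal: the topological duality between $\phi_*$ and $\psi_*$ (Lemma~\ref{lemma:duality}), and the distributional duality between the plaquette system and its dual. Friedgut--Kalai need not enter the argument explicitly, because the definitions of $p^{\square}_c$ and $q^{\square}_c$ already package the relevant asymptotic behavior.

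First I would write down the two ingredients cleanly. By Lemma~\ref{lemma:duality}, $S^{\square}$ occurs if and only if $Z^{\blacksquare}$ does, so
\[
\mathbb{P}_p\bigl(S^{\square}(i,d,N,p)\bigr) \;=\; 1 - \mathbb{P}_p\bigl(A^{\blacksquare}(i,d,N,p)\bigr).
\]
By the definition of the dual system, the distribution of $P^{\blacksquare}(i,d,N,p)$ agrees with that of $P(d-i,d,N,1-p)$, and the event $A^{\blacksquare}$ for the former is the event $A^{\square}$ for the latter. Therefore
\[
\mathbb{P}_p\bigl(A^{\blacksquare}(i,d,N,p)\bigr) \;=\; \mathbb{P}_{1-p}\bigl(A^{\square}(d-i,d,N,1-p)\bigr),
\]
and combining the two identities gives the master equation
\[
\mathbb{P}_p\bigl(S^{\square}(i,d,N,p)\bigr) \;=\; 1 - \mathbb{P}_{1-p}\bigl(A^{\square}(d-i,d,N,1-p)\bigr).
\]

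Next I would apply this identity, with the roles of $i$ and $d-i$ exchanged, to the definition of $q^{\square}_c(d-i,d)$. The condition $\limsup_{N\to\infty}\mathbb{P}_p(S^{\square}(d-i,d,N,p)) < 1$ is equivalent to $\liminf_{N\to\infty}\mathbb{P}_{1-p}(A^{\square}(i,d,N,1-p)) > 0$. Substituting $q = 1-p$ and using the elementary fact that $\sup\{1-q : q\in E\} = 1 - \inf E$ for any $E\subseteq[0,1]$,
\[
q^{\square}_c(d-i,d) \;=\; 1 - \inf\bigl\{q : \liminf_{N\to\infty}\mathbb{P}_q\bigl(A^{\square}(i,d,N,q)\bigr) > 0\bigr\} \;=\; 1 - p^{\square}_c(i,d),
\]
which rearranges to the desired equality.

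The main obstacle, such as it is, is purely bookkeeping: one has to keep careful track of which homological dimension each event lives in after the dual system is introduced (since dualizing swaps $i$ with $d-i$), and of the direction of the inequalities when passing from $\limsup < 1$ for $S^{\square}$ to $\liminf > 0$ for $A^{\square}$. Once the master equation displayed above is written down, the rest is a one-line manipulation of suprema and infima, and no additional probabilistic input beyond the two dualities is required.
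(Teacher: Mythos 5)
Your proposal is correct, and it takes a genuinely more direct route than the paper. The paper proves the two inequalities separately: for $p>p^{\square}_c(i,d)$ it invokes Corollary~\ref{cor:sharp} (and hence the Friedgut--Kalai sharp-threshold machinery behind it) to get $\mathbb{P}_p(S^{\square})\to 1$, converts this via Lemma~\ref{lemma:duality} into $\mathbb{P}_p(A^{\blacksquare})\to 0$ and concludes $1-p\le q^{\square}_c(d-i,d)$; for $p<p^{\square}_c(i,d)$ it runs a subsequence argument with $Z^{\square}\iff S^{\blacksquare}$ to get the reverse inequality. You instead observe the exact finite-$N$ identity $\mathbb{P}_p\paren{S^{\square}(i,d,N,p)}=1-\mathbb{P}_{1-p}\paren{A^{\square}(d-i,d,N,1-p)}$, which follows from $S^{\square}\iff Z^{\blacksquare}$ (Lemma~\ref{lemma:duality}) together with the distributional identity $P^{\blacksquare}(i,d,N,p)\overset{d}{=}P(d-i,d,N,1-p)$, and then read off $q^{\square}_c(d-i,d)=1-p^{\square}_c(i,d)$ as a purely set-theoretic manipulation of the definitions; no sharp-threshold input, no subsequences, and no monotonicity of the events is needed. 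This is leaner and makes transparent that the proposition is a formal consequence of duality plus the definitions of $p^{\square}_c$ and $q^{\square}_c$; it also transfers verbatim to the permutohedral model via Lemma~\ref{lemma:permutoduality}. The only cosmetic point worth adding is that the set $E=\set{q:\liminf_N\mathbb{P}_q\paren{A^{\square}(i,d,N,q)}>0}$ is nonempty (it contains $q=1$, since the full $i$-skeleton carries all of $H_i(\T^d_N)$), so that $\sup\set{1-q:q\in E}=1-\inf E$ is legitimate; the analogous convention issue is implicit in the paper's definitions as well. The paper's longer route costs nothing extra in context, since Corollary~\ref{cor:sharp} is already established and is needed for the surrounding results, but for this particular identity your argument is the sharper one.
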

\begin{proof}
Let $p>p^{\square}_c\paren{i,d}.$ Then
\begin{align*}
\mathbb{P}_{p}\paren{A^{\blacksquare}}&=\;\;1-\mathbb{P}_{p}\paren{Z^{\blacksquare}}&&\text{by definition}\\
&=\;\;1-\mathbb{P}_p\paren{S^{\square}} &&\text{by Lemma~\ref{lemma:duality}}\\
&\rightarrow \;\; 0 &&\text{by Corollary~\ref{cor:sharp}}
\end{align*}
as $N\rightarrow\infty.$ Therefore,  $1-p\leq q^{\square}_c\paren{d-i,d}$ for all $p>p^{\square}_c\paren{i,d}$ and
\begin{equation}
\label{eqn:dualityProp}
p^{\square}_c\paren{i,d}+q^{\square}_c\paren{d-i,d}\geq 1\,.
\end{equation}

Until now, we have suppressed the dependence of probabilities of events on $N.$ To work with subsequences in this argument, denote the probability of an event $B$ for $P\paren{i,d,N,p}$ by $\mathbb{P}_{p,N}\paren{B}.$ 

Let $p<p^{\square}_c\paren{i,d}.$ Then there is a subsequence $\set{n_1,n_2,\ldots}$ of $\N$ for which
\[\mathbb{P}_{p,n_k}\paren{A^{\square}}\rightarrow 0\,\,.\]
By Lemma~\ref{lemma:duality},
\[\mathbb{P}_{p,n_k}\paren{S^{\blacksquare}}\rightarrow 1\]
so
\[\limsup_{N\rightarrow\infty}\mathbb{P}_{p}\paren{S^{\blacksquare}}=1\]
and $1-p\geq q^{\square}_c\paren{i,d}$ for all $p<p^{\square}_c\paren{i,d}.$ Therefore,
\[p^{\square}_c\paren{i,d}+q^{\square}_c\paren{d-i,d}\leq 1\]
which holds with equality by Equation~\ref{eqn:dualityProp}.
\end{proof}

Propositions~\ref{prop:sharp2} and~\ref{prop:duality} show that duality between $p^{\square}_c\paren{i,d}$ and $p^{\square}_c\paren{d-i,d}$ is equivalent to the convergence of the threshold function $\lambda^{\square}\paren{N}.$

\begin{Corollary}
\label{cor:sharpness}
The following are equivalent.
\begin{enumerate}[label=(\alph*)]
    \item $\lim_{N\to\infty} \lambda^{\square}\paren{N}$ exists.
    \item $p^{\square}_c \paren{i,d}=q^{\square}_c\paren{i,d}.$
    \item $p^{\square}_c \paren{i,d}+p^{\square}_c\paren{d-i,d}=1.$
\end{enumerate}
\end{Corollary}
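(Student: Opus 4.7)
The proof should be a short chain of manipulations assembling results already in hand: the identifications $q^{\square}_c(i,d)=\liminf_{N\to\infty}\lambda^{\square}(N)$ and $p^{\square}_c(i,d)=\limsup_{N\to\infty}\lambda^{\square}(N)$ that follow from Proposition~\ref{prop:sharp2} and the discussion after it, together with the partial duality identity $p^{\square}_c(i,d)+q^{\square}_c(d-i,d)=1$ of Proposition~\ref{prop:duality}. There is no genuine hard step; the task is just to chain these equalities carefully.

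For (a)$\iff$(b), I would observe that $\lambda^{\square}(N)\in[0,1]$ is bounded, so $\lim_{N\to\infty}\lambda^{\square}(N)$ exists iff $\liminf_{N\to\infty}\lambda^{\square}(N)=\limsup_{N\to\infty}\lambda^{\square}(N)$. By the identifications above, this equality is precisely $q^{\square}_c(i,d)=p^{\square}_c(i,d)$, which is (b).

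For (b)$\iff$(c), I would apply Proposition~\ref{prop:duality} in both directions. Proposition~\ref{prop:duality} gives
\[
p^{\square}_c(i,d)+q^{\square}_c(d-i,d)=1 \qquad \text{and} \qquad p^{\square}_c(d-i,d)+q^{\square}_c(i,d)=1.
\]
Assuming (b), i.e.\ $p^{\square}_c(i,d)=q^{\square}_c(i,d)$, the second identity gives $p^{\square}_c(d-i,d)=1-p^{\square}_c(i,d)$, which is (c). Conversely, assuming (c), subtracting $p^{\square}_c(d-i,d)+q^{\square}_c(i,d)=1$ from $p^{\square}_c(i,d)+p^{\square}_c(d-i,d)=1$ yields $p^{\square}_c(i,d)=q^{\square}_c(i,d)$, i.e.\ (b).

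The only place where anything could plausibly go wrong is an asymmetry in the statement of Proposition~\ref{prop:duality}, but since its proof is symmetric in $i$ and $d-i$ (the roles of $P$ and $P^{\blacksquare}$ swap), both forms of the duality identity are available, and the equivalence closes up cleanly. The same argument works verbatim for the permutohedral model with $\square$ replaced by $\hexagon$ once the analogous duality and sharp threshold statements have been established.
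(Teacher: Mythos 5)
Your proposal is correct and matches the paper's (implicit) argument: the paper likewise deduces the corollary by combining the identifications $q^{\square}_c=\liminf_N\lambda^{\square}(N)$, $p^{\square}_c=\limsup_N\lambda^{\square}(N)$ coming from Proposition~\ref{prop:sharp2} with the duality identity of Proposition~\ref{prop:duality} applied to both $i$ and $d-i$. No gaps; your chaining of the two duality identities for (b)$\iff$(c) is exactly the intended reasoning.
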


In the next section, we demonstrate that the statements of Corollary~\ref{cor:sharpness} hold in the cases $i=1$ and $i=d-1.$

\section{The Cases $i=1$ and $i=d-1$}
\label{sec:one}

We show that $p^{\square}_c\paren{1,d}$ and $q^{\square}_c\paren{1,d}$ coincide and equal the critical threshold for bond percolation on $\Z^d,$ denoted here by $\hat{p}_c=\hat{p}_c(d).$ For $d=2,$ this is a special case of Theorem~\ref{thm:half} so we may assume that $d\geq 3.$ 

We rely on two results from the classical theory of this system in the subscritical and supercritical regimes. In the former regime, we use Menshikov's Theorem~\cite{menshikov1986coincidence}, also proven independently by Aizenman and Barsky~\cite{aizenman1987sharpness}, showing an exponential decay in the radius of the cluster at the origin. 

For a vertex $x$ and a subset $S$ of $\T^d_N$ (or $\Z^d$), denote the event that $x$ is connected to a vertex in $S$ by a path of edges in $P$ by $x\leftrightarrow S.$ 
\begin{Theorem}[Menshikov/Aizenman-Barsky]
\label{thm:menshikov}
Consider bond percolation on $\Z^d.$ If $p<\hat{p}_c$ then there exists a $\kappa\paren{p}>0$ so that
\[\mathbb{P}_p\paren{0\leftrightarrow \partial[-M,M]^d}\leq e^{-\kappa\paren{p}M}\]
for all $M>0.$ 
\end{Theorem}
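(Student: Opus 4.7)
The plan is to follow the finite-criterion approach of Duminil--Copin and Tassion, which gives a short modern proof of the classical Menshikov/Aizenman--Barsky sharpness theorem. For a finite set $S\subset\Z^d$ containing the origin, write $\Delta S$ for the edge boundary of $S$ and define
\[
\phi_p(S) \coloneqq p\sum_{(x,y)\in\Delta S}\mathbb{P}_p\paren{0\leftrightarrow_S x},
\]
where $0\leftrightarrow_S x$ means connected through edges lying in $S$. Set
\[
\tilde{p}_c \coloneqq \sup\set{p : \exists\text{ finite }S\ni 0 \text{ with } \phi_p(S)<1}.
\]
The goal is to prove (a) that $p<\tilde{p}_c$ implies exponential decay of $\mathbb{P}_p(0\leftrightarrow\partial[-M,M]^d)$, and (b) that $p>\tilde{p}_c$ implies $\theta(p)\coloneqq\mathbb{P}_p(0\leftrightarrow\infty)>0$. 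Together with the trivial inclusion $\tilde{p}_c\le\hat{p}_c$, these yield $\tilde{p}_c=\hat{p}_c$, and the desired conclusion follows from (a).

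For part (a), fix $p<\tilde{p}_c$ and a finite $S\ni 0$ with $\phi_p(S)<1$, and let $L$ be the $\ell^\infty$-diameter of $S$. Any open connection from $0$ to $\partial[-M,M]^d$ must, upon exploring the cluster of $0$ inside $S$, first exit through some edge of $\Delta S$ and then, from the resulting vertex, repeat the same process in a translate of $S$. Decoupling successive excursions via the BK inequality and summing over the boundary edge of each excursion gives the recursive bound
\[
\mathbb{P}_p\paren{0\leftrightarrow\partial[-M,M]^d}\le \phi_p(S)^{\floor{M/L}},
\]
which is exponential decay at rate $\kappa(p)=\frac{1}{L}\log\paren{1/\phi_p(S)}>0$.

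For part (b), fix $p>\tilde{p}_c$, so that $\phi_p(S)\ge 1$ for every finite $S\ni 0$. Writing $\theta_n(p)=\mathbb{P}_p(0\leftrightarrow\partial[-n,n]^d)$ and applying Russo's formula to the increasing event $\set{0\leftrightarrow\partial[-n,n]^d}$, one expresses $\theta_n'(p)$ as a sum over pivotal edges. The trick is to condition on the set $\mathcal{C}$ of vertices in $[-n,n]^d$ from which $0$ cannot be reached by an open path avoiding a given pivotal edge: this realizes an (admittedly random) set $S\ni 0$ whose boundary contains the pivotal edge, and the conditional probability appearing in Russo's formula can, via the BK inequality, be bounded below by a deterministic factor involving $\phi_p(S)\ge 1$. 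After a careful re-summation, one arrives at a differential inequality of the form
\[
\theta_n'(p)\ge \frac{1}{p(1-p)}\paren{1-\theta_n(p)}
\]
uniformly in $n$. Integrating from $\tilde{p}_c$ to $p$ and taking $n\to\infty$ yields $\theta(p)\ge 1-(1-\tilde{p}_c)/(1-p)\paren{\cdots}>0$, giving $p\ge\hat{p}_c$.

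The main obstacle is step (b): the lower bound on $\theta_n'(p)$ has to be uniform in $n$, which forces one to apply Russo's formula not to a fixed $S$ but to a random stopping set determined by the configuration, and then to re-express the resulting expectation in a way that invokes the defining inequality $\phi_p(S)\ge 1$ valid for \emph{every} finite $S$. Step (a) is comparatively mechanical once a good set $S$ has been chosen, so nearly all the work is in the differential inequality.
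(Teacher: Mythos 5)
Note first that the paper does not prove this statement at all: Theorem~\ref{thm:menshikov} is imported as a classical external input (Menshikov; Aizenman--Barsky), cited and then used in the proof of Proposition~\ref{prop:1upper}. So any complete argument you give is necessarily ``a different route from the paper.'' What you propose is essentially the Duminil-Copin--Tassion proof of sharpness, which is indeed a legitimate way to obtain exponential decay below $\hat{p}_c$, and your part (a) is fine for Bernoulli percolation: splitting an open path at its first exit from a translate of $S$ gives edge-disjoint witnesses, so the BK inequality plus translation invariance yields $\mathbb{P}_p\paren{0\leftrightarrow\partial[-M,M]^d}\leq \phi_p(S)^{\floor{M/L}}$, and $\tilde{p}_c\leq\hat{p}_c$ then follows from (a) itself (exponential decay forces $\theta(p)=0$). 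One small point you should make explicit in both (a) and (b): you need $\phi_p(S)$ to be nondecreasing in $p$ (true, since $\mathbb{P}_p\paren{0\leftrightarrow_S x}$ is increasing) so that $p<\tilde p_c$ gives some $S$ with $\phi_p(S)<1$ and $p>\tilde p_c$ gives $\phi_p(S)\geq 1$ for every finite $S\ni 0$.

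The genuine problem is the mechanism you describe in step (b). The BK inequality bounds probabilities of disjoint occurrence from \emph{above}; it cannot produce the lower bound on the pivotality sum that the differential inequality requires, and the set you condition on (``vertices from which $0$ cannot be reached avoiding a given pivotal edge'') is not the right one. The correct argument conditions on the random set $\mathcal{S}=\set{x\in[-n,n]^d : x\not\leftrightarrow\partial[-n,n]^d}$. The event $\set{\mathcal{S}=S}$ is measurable with respect to the states of edges not lying strictly inside $S$, so conditionally the edges inside $S$ remain i.i.d.\ Bernoulli($p$); moreover, on $\set{0\not\leftrightarrow\partial[-n,n]^d,\ \mathcal{S}=S}$ a boundary edge $(x,y)$ of $S$ (with $x\in S$) is pivotal precisely when $0\leftrightarrow_S x$. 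Summing over $S\ni 0$ and using $\mathbb{P}_p\paren{e\ \mathrm{pivotal},\,0\not\leftrightarrow\partial[-n,n]^d}\leq\paren{1-p}\mathbb{P}_p\paren{e\ \mathrm{pivotal}}$ (pivotality is independent of the state of $e$) converts Russo's formula into $\theta_n'(p)\geq\frac{1}{p(1-p)}\inf_{S\ni 0}\phi_p(S)\paren{1-\theta_n(p)}$, after which your integration goes through and gives $\theta(p)\geq\frac{p-\tilde p_c}{p\paren{1-\tilde p_c}}>0$ for $p>\tilde p_c$, uniformly in $n$. With step (b) repaired in this way (independence-by-measurability replacing BK), your outline is a correct proof of the quoted theorem; as written, the lower-bound step would fail.
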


In the supercritical regime, we use the following lemma of Grimmett and Marstrand~\cite{grimmett1990supercritical} on crossing probabilities inside a rectangle (the formulation we write here is given as Lemma 7.78 in~\cite{grimmett1999percolation}).

\begin{Lemma}
\label{lemma:grimmett778}
Let $d \geq 3$ and $p>\hat{p}_c.$ Then there is an $L>0$ and a $\delta>0$ so that if $N>0$ and $x\in \brac{0,N-1}^{d-1}\times \brac{0,L},$ the probability that $0$ is connected to $x$ inside  $P\cap \big(\brac{0,N-1}^{d-1}\times \brac{0,L}\big)$ is at least $\delta.$
\end{Lemma}

\begin{proof}[Proof of Theorem~\ref{thm:one}]
Consider plaquette percolation in $\T^d_N$ with $i=1.$ We will first apply Menshikov's Theorem to show that the probability of a giant one-cycle limits to zero as $N\rightarrow\infty$ when  $p<\hat{p}_c.$

 Let $p<\hat{p}_c$ and let $M=\floor{N/2}.$ For a vertex $x$ of $\mathbb{T}_N^d,$ denote by $A_x$ the event that there is a connected, giant $1$-cycle containing an edge adjacent to $x.$ If $A_0$ occurs then $0\leftrightarrow \partial[-M,M]^d$ because a $1$-cycle contained in $[-M,M]^d$ is null-homologous in $\mathbb{T}_N^d$.
 Therefore,
 \begin{equation}
     \label{eq:pboundsn1}
     \mathbb{P}_p\paren{A_x}\leq e^{-\kappa\paren{p}M}
 \end{equation}
for all vertices $x$ of $\mathbb{T}_N^d,$ using translation invariance and Theorem~\ref{thm:menshikov}. 

Let $X$ be the number of vertices in $\mathbb{T}_N^d$ that are contained in a connected, giant $1$-cycle. 
$A^{\square}=\set{X\geq 1}$ so

\begin{align*}
    \mathbb{P}_p\paren{A^{\square}}&=\;\;\mathbb{P}_p\paren{X\geq 1}\\
    &\leq\;\; \mathbb{E}_p\paren{X} && \text{by Markov's Inequality}\\
    &=\;\; \sum_{x\in \mathbb{T}_N^d}\mathbb{P}_p\paren{A_x} \\
    & \leq\;\;  N^D e^{-\kappa\paren{p}M} &&\text{using Equation ~\ref{eq:pboundsn1}}\\
    & = \;\; N^d e^{-\kappa\paren{p}\floor{N/2}}
\end{align*}
which goes $0$ as $N\rightarrow\infty$.

Now we consider the supercritical case. Let $p>p'>\hat{p}_c$ and let $B_j$ be the event that there is a path of edges of $P$ connecting $0$ to $\paren{N-1}\vec{e}_j$ inside of $\brac{0,N-1}^d$. By the previous lemma and by symmetry, there is a $\delta>0$ so that $\mathbb{P}_p\paren{B_j}\geq \delta$ for every $1 \leq j \leq d.$ 

If $B_j$ occurs, then the path obtained by adding the edge between $\paren{N-1}\vec{e}_j$  and $N\vec{e}_j=0$ is a giant $1$-cycle. As $H_1\paren{\T^d}$ is generated by the circles in the coordinate directions of $\T^d_N$, it follows from Harris' lemma that
\[\mathbb{P}_{p'}\paren{S^{\square}}\geq \paren{p'}^d\,\mathbb{P}_{p'}\paren{\bigcap_{1 \leq j \leq d} B_j} \geq \paren{p'}^d\,\mathbb{P}_{p'}\paren{B_1}^d \geq  \paren{p'}^d\delta^d\,.\]
Then by applying Theorem~\ref{thm:FK}, we see that $\mathbb{P}_p\paren{S^{\square}} \to 1$
as $N \to\infty$ as desired.

We have now proven the statement in Theorem~\ref{thm:one} regarding the case of $i=1.$ The case of $i=d-1$ follows immediately from an application of Lemma~\ref{lemma:duality}.
\end{proof}

\section{Monotonicity}
\label{sec:weak}

Next, we prove that the critical probabilities $p^{\square}_c\paren{i,d}$ are strictly increasing in $i$ and strictly decreasing in $d.$ This will complete the proof of Theorem~\ref{thm:weak}. 

Our strategy will be to compare percolation on $\T^d_N$ with the thickened $(d-1)$-dimensional slice $\T^d_N \cap \set{0 \leq x_1 \leq 1}.$ Compare the first part of the proof to that of Lemma 4.9 of~\cite{bobrowski2020homological2}.

\begin{Proposition}
\label{prop:monotonicity}
For $0<i<d-1,$
\[p^{\square}_c\paren{i,d} < p^{\square}_c\paren{i,d-1} < p^{\square}_c\paren{i+1,d}\,.\]
\end{Proposition}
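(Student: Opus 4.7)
My plan is to compare $i$-dimensional plaquette percolation on $\T^d_N$ with plaquette percolation on a codimension-one subtorus. For $0 \le k \le N-1$ the hyperplane $L_k \coloneqq \T^d_N \cap \set{x_1 = k}$ is a subcomplex of $\T^d_N$ isomorphic to $\T^{d-1}_N$, and the inclusion $L_k \hookrightarrow \T^d_N$ induces on $H_i$ the canonical injection $\bigwedge^i F^{d-1} \hookrightarrow \bigwedge^i F^d$. Crucially, the $i$-cells of distinct $L_k$'s are pairwise disjoint, so that $P \cap L_0, \dots, P \cap L_{N-1}$ behave as mutually independent copies of $i$-dimensional plaquette percolation on $\T^{d-1}_N$ at parameter $p$.

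For the left inequality $p^{\square}_c(i,d) \le p^{\square}_c(i,d-1)$ I would observe that a giant $i$-cycle in any $P \cap L_k$ remains giant after inclusion into $\T^d_N$, whence
\begin{equation*}
\mathbb{P}_p\bigl(A^{\square}(i,d,N)\bigr) \;\ge\; 1 - \bigl(1 - \mathbb{P}_p(A^{\square}(i,d-1,N))\bigr)^N. \tag{$\star$}
\end{equation*}
For any $p > p^{\square}_c(i,d-1)$, Corollary~\ref{cor:sharp} forces $\mathbb{P}_p(A^{\square}(i,d-1,N)) \to 1$, so the right-hand side tends to $1$ and $p \ge p^{\square}_c(i,d)$. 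For the right inequality $p^{\square}_c(i,d-1) \le p^{\square}_c(i+1,d)$ I would dually consider the $(i+1)$-cells of $P = P(i+1,d,N,p)$ having $e_1$ as a direction and base an $i$-face $\tau \subset L_0$: these are in bijection with the $i$-faces of $L_0$ and independently Bernoulli($p$), realizing a copy of $i$-dimensional plaquette percolation on $L_0 \cong \T^{d-1}_N$ at parameter $p$. Via the Gysin-type map $H_{i+1}(\T^d) \to H_i(\T^{d-1})$ extracting the $e_1$-component, a giant $(i+1)$-cycle whose class in $\bigwedge^{i+1} F^d$ involves $e_1$ produces a giant $i$-cycle in this induced system on $L_0$. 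By Proposition~\ref{prop:Zdirrep} and Lemma~\ref{lemma:spinning2}, $\liminf_N \mathbb{P}_p(A^{\square}(i+1,d,N)) > 0$ implies $\liminf_N \mathbb{P}_p(S^{\square}(i+1,d,N)) > 0$, and on $S^{\square}$ the image of $\phi_*$ certainly contains classes with nonzero $e_1$-component; hence $\liminf_N \mathbb{P}_p(A^{\square}(i,d-1,N)) > 0$ and $p \ge p^{\square}_c(i,d-1)$.

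The main obstacle, and the most delicate part of the argument, is upgrading each $\le$ to a strict $<$: the pointwise bound $\lambda^{\square}_d(N) \le \lambda^{\square}_{d-1}(N)$ given by $(\star)$ does not immediately separate the $\limsup$s. My plan here is to combine $(\star)$ with the Friedgut--Kalai theorem (Theorem~\ref{thm:FK}) applied to $A^{\square}(i,d-1,N)$: at any fixed probability bound the $(d-1)$-dimensional sharp threshold window has width $O(1/\log N) \to 0$, so there exist parameters $p = p_N$ with $p_N \nearrow \limsup_N \lambda^{\square}_{d-1}(N) = p^{\square}_c(i,d-1)$ along which $\mathbb{P}_{p_N}(A^{\square}(i,d-1,N))$ stays bounded away from $0$ uniformly in $N$. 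The $N$-fold amplification in $(\star)$ then forces $\mathbb{P}_{p_N}(A^{\square}(i,d,N)) \to 1$, and a further application of Theorem~\ref{thm:FK} to $A^{\square}(i,d,N)$ translates this into a definite gap $p^{\square}_c(i,d) < p^{\square}_c(i,d-1)$ in the limit. A symmetric argument, invoking Proposition~\ref{prop:Zdirrep} and Lemma~\ref{lemma:spinning2} to guarantee that the relevant $e_1$-component appears with positive probability, yields the second strict inequality.
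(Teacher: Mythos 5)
Your reductions for the two \emph{weak} inequalities are essentially sound: the disjoint slices $L_k$ do give independent copies of $P(i,d-1,N,p)$, and your Gysin/slicing map extracting the $e_1$-component is a legitimate (and different from the paper's) way to get $p^{\square}_c(i,d-1)\leq p^{\square}_c(i+1,d)$, where the paper instead combines Harris's inequality on the coordinate-slice surjectivity events with the duality relation of Proposition~\ref{prop:duality}. But the Proposition asserts \emph{strict} inequalities, and your argument for strictness has a genuine gap. The Friedgut--Kalai theorem (Theorem~\ref{thm:FK}) only narrows the width of the transition window around the ($N$-dependent) threshold location; it moves conclusions \emph{upward} in $p$ (from probability $>\epsilon$ at $p$ to $>1-\epsilon$ at $p+O(1/\log N)$), and its contrapositive only gives \emph{upper} bounds at smaller $p$. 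So from $\mathbb{P}_{p_N}(A^{\square}(i,d,N))\to 1$ along a sequence $p_N\to p^{\square}_c(i,d-1)$ you can conclude nothing about $\mathbb{P}_{p'}(A^{\square}(i,d,N))$ at any \emph{fixed} $p'$ strictly below $p^{\square}_c(i,d-1)$; the conclusion you actually reach is only $\limsup_N\lambda^{\square}(N,i,d)\leq p^{\square}_c(i,d-1)$, i.e.\ the weak inequality again. Nor can the amplification $(\star)$ rescue this: $N$ independent slices help only if $\mathbb{P}_p(A^{\square}(i,d-1,N))$ decays no faster than $1/N$ below threshold, whereas below threshold one expects exponential decay in $N$ (and for $i=1$ this is exactly what Menshikov's theorem, Theorem~\ref{thm:menshikov}, gives), so a union of independent codimension-one slices cannot shift the critical point at all.

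Strictness genuinely requires using plaquettes of $\T^d_N$ \emph{outside} the slice to boost the effective parameter on the slice, i.e.\ an enhancement/sprinkling argument rather than a sharp-threshold argument. This is what the paper does: it couples percolation at parameter $p$ on the thickened slab $T'=\T^d_N\cap\set{0\leq x_1\leq 1}$ with percolation on $T=\T^d_N\cap\set{x_1=0}$ at the strictly larger parameter $p+p(1-p)q^{2i}$ (with $p=1-(1-q)^{2(d-i)}$), using the perpendicular and parallel faces of $T'$ to simulate the extra openings, and then transports a giant cycle of the boosted system $R$ back into $P$ by adding boundaries $\partial w(v)$ of the bridging $(i+1)$-cells. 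Choosing $p<p^{\square}_c(i,d-1)<p+p(1-p)q^{2i}$ yields $p^{\square}_c(i,d)<p^{\square}_c(i,d-1)$, and the second strict inequality then follows from the dual statement $q^{\square}_c(i,d)<q^{\square}_c(i,d-1)$ together with Proposition~\ref{prop:duality}. Your ``symmetric argument'' for the second strict inequality inherits the same gap, since it again rests on Friedgut--Kalai rather than on any mechanism that actually moves the threshold.
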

\begin{proof}
First, we will show that
\begin{equation}
\label{eqn:prop20d}
p^{\square}_c\paren{i,d}\leq p^{\square}_c\paren{i,d-1} \leq  p^{\square}_c\paren{i+1,d}\,.
\end{equation}
Let $T=\T^d_{N}\cap\set{x_1=0}.$ $T$ is a torus of dimension $d-1$ and, by a standard argument, the map on homology $\alpha_*:H_j\paren{T}\rightarrow H_j\paren{\T^d}$ induced by the inclusion $T\hookrightarrow \T^d$  is injective for all $j$. $P\cap T$ is distributed as $P\paren{i,d-1,N,p}.$

Define $A^{\square}_{d-1}$ to be the event that $\gamma_*:H_{i}\paren{P\cap T} \rightarrow H_i\paren{T}$ is non-zero, where $\gamma_*$ is induced by the inclusion $P\cap T \hookrightarrow T.$ If $A^{\square}_{d-1}$ holds then $\alpha_*\circ \phi_*$ is also non-zero, as $\alpha_*$ is injective. But $\alpha_*\circ\gamma_{*}=\phi_*\circ\beta_*,$ where $\beta_*$ is the map on homology $\beta_*:H_{i}\paren{P\cap T}\rightarrow H_i\paren{P}$ induced by the inclusion $P\cap T\hookrightarrow P$, so $\phi_*$ is also non-zero. It follows that $A^{\square}_{d-1}\implies A^{\square}.$ Therefore $p^{\square}_c\paren{i,d-1}\geq p^{\square}_c\paren{i,d}$ by the definition of that threshold. 

Observe that $H_i\paren{\T^d}$ is generated by the images of the maps on homology $H_i\paren{\T^d\cap\set{x_j=0}}\rightarrow H_i\paren{\T^d}$ induced by the inclusions $\T^d\cap\set{x_j=0}\hookrightarrow \T^d$ as $j$ ranges from $1$ to $d.$ Denote by $S_j$ the event that the map  $H_i\paren{P\cap\set{x_j=0}}\rightarrow H_i\paren{\T^d\cap\set{x_j=0}}$ induced by inclusion is surjective and let $q>q^{\square}_c(i,d-1).$ Then there is a subsequence $\paren{n_1,n_2,\ldots}$ of $\N$ so that
\[\mathbb{P}_{p,n_k}\paren{S_j}\rightarrow 1\]
as $k\rightarrow\infty$ for $j=1,\ldots,d.$ As $S\subset \cap_{j} S_j,$  Harris's Inequality implies that $\mathbb{P}_{p,n_k}\paren{S}\rightarrow 1$ also. Therefore, $p>q^{\square}_c\paren{i,d}$ and $q^{\square}_c\paren{i,d-1}\geq q^{\square}_c\paren{i,d}$ for all $i$ and $d.$ Combining this inequality (for a different choice of $i$ and $d$) with Proposition~\ref{prop:duality} we obtain
\begin{equation}p^{\square}_c(i,d-1) = 1-q^{\square}_c(d-i-1,d-1) \leq 1-q^{\square}_c(d-i-1,d) = p^{\square}_c(i+1,d)\,,
\label{eqn:prop20e}
\end{equation}
which shows Equation~\ref{eqn:prop20d}.

It will be useful later in the argument to observe that these inequalities, together with Theorem~\ref{thm:one} and known lower bounds on $\hat{p}_c$ (see~\cite{BR06}, for example), imply that
\begin{equation}
\label{eqn:prop20a}
    0<p^{\square}_c\paren{1,d} \leq p^{\square}_c\paren{i,d}\leq p^{\square}_c\paren{i,i+1}<1\,.
\end{equation}

Furthermore, we can show $p^{\square}_c\paren{i,d} < p^{\square}_c\paren{i,d-1}$ using the thicker cross-section $T' = \T^d_N \cap \set{0 \leq x_1 \leq 1}.$ Note that an $i$-face $v$ of $T$ is in the boundary of a unique $(i+1)$-face $w(v)$ of $T'$ that is not contained in $T$ (for example, if $v=\set{0}\times \brac{0,1}^i \times \set{0}^{d-i-1},$ then $w(v)=\brac{0,1}^{i+1} \times \set{0}^{d-i-1}$). The idea is to sometimes add $v$ to $T$ when the other $i$-faces of $w(v)$ are present, effectively increasing the percolation probability in $T$ by a small amount. However, we must be careful to do so in a way so that the $i$-faces remain independent.

The $i$-faces of $T'$ are divided into three subsets: those included in $T,$ those which are perpendicular to $T$ (that is, $i$-faces not included in $T$ which intersect $T$ in their boundary), and those parallel to $T$ (that is, $i$-faces of the form $v+\vec{e}_1$, where $v$ is an $i$-face of $T$). For an $i$-face $v$ of $T,$ let $J(v)$ be the set of all perpendicular $i$-faces that meet $v$ at an $i-1$ face. $v,$ $v+\vec{e}_1,$ and $J(v)$ are the $i$-faces of the $(i+1)$-face $w(v).$ Also, for a perpendicular $i$-face $u$ of $T',$ let $K(u)=\set{v:u\in J(v)}.$ Note that for any $u$ and $v$
\begin{equation}
\label{eqn:prop20b}
\abs{J(v)}=2i \qquad \text{and} \qquad \abs{K(u)}=2(d-i)\,.
\end{equation}

\begin{figure}[t]
    \centering
    \includegraphics[width=.8\textwidth]{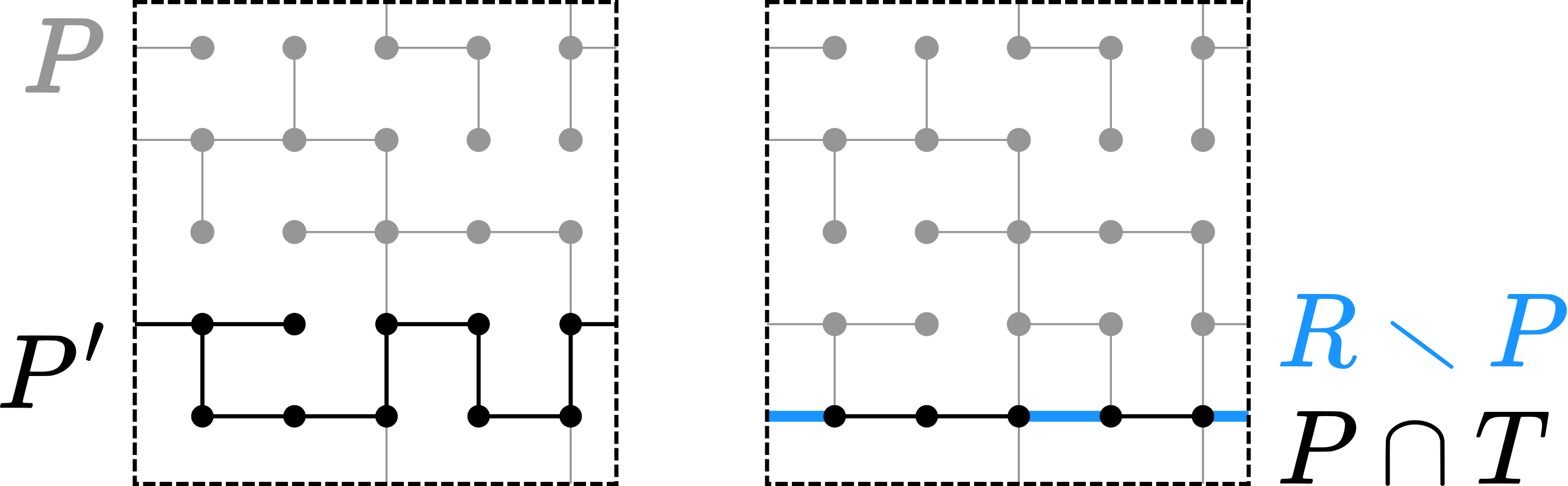}
    \caption{\label{fig:prop20setup} The setup in the proof of Proposition~\ref{prop:monotonicity} for the case $d=2,i=1.$ On the left, $P'$ is shown in black and the remaining faces of $P$ are depicted in gray. On the right, $P\cap T$ is in black and the additional faces of $R$ are shown in blue. Note that giant cycles exist in $P, P',$ and $R,$ but not in $P\cap T.$}
\end{figure}

We define a coupling between $i$-dimensional plaquette percolation $P'$ on $T'$ with probability $p$ and $i$-dimensional percolation $R$ with probability $p+p(1-p)q^{2i}$ on $T,$ where $q=q(p)$ is chosen to satisfy $p = 1-(1-q)^{2(d-i)}$.  For all pairs $\paren{v,u}$ where $v$ is an $i$-face of $T$ and $u\in J(v),$ define independent Bernoulli random variables $\kappa\paren{u,v}$ to be $1$ with probability $q$ and $0$ with probability $1-q.$ Let $P'\subset T'$ be the subcomplex containing the $(i-1)$-skeleton of $T'$ where each $i$-face in $T$ or parallel to $T$ is included independently with probability $p,$ and the other $i$-faces $u$ of $T'$ are included if $\kappa\paren{u,v}=1$ for at least one $v\in K\paren{u}.$ Observe that 
\[\mathbb{P}\paren{u\in P'}=1-\mathbb{P}\paren{\cap_{v \in K\paren{u}} \set{\kappa\paren{u,v}=0}} = 1-(1-q)^{2(d-i)}=p\]
(using Equation~\ref{eqn:prop20b}), and that the faces $u$ are included independently. That is, $P'$ is percolation with probability $p$ on $T'.$ On the other hand, define $R\subset T$ by starting with all faces of $P'\cap T$ and adding an $i$-face $v\notin P'$ if $v+\vec{e}_1\in P'$ and $\kappa\paren{v,u}=1$ for all $u\in J\paren{v}.$ Then $R$ is percolation on $T$ with probability 
$p+p(1-p)q^{2i} > p.$ See Figure~\ref{fig:prop20setup}.

As $p+p(1-p)q^{2i}$ is a continuous function of $p$ and $0<p^{\square}_c(i,d-1) < 1$ (Equation~\ref{eqn:prop20a}), we can choose $p$ to satisfy 
\[0<p< p^{\square}_c(i,d-1)<p+p(1-p)q^{2i}<1\,.\] Then
\begin{equation}
\label{eqn:prop20c}
\mathbb{P}_{p+p(1-p)q^{2i}}\paren{\text{$\xi_*$ is non-trivial}}\to 1
\end{equation}
as $N\to \infty$ by Corollary~\ref{cor:sharp}, where $\xi_*:H_i(R)\rightarrow H_i(T)$ is the map on homology induced by the inclusion $R\hookrightarrow T.$

Extend $P'$ to plaquette percolation $P$ on all of $\T^d_N$ by including the $i$-faces in $\T^d_N\setminus T'$ independently with probability $p.$ If $\sigma$ is an $i$-cycle of $R$ we can write
\[\sigma=\sum_{j} a_j u_j + \sum_{k} b_k v_k \]
where $u_j\notin P$ and $v_k\in P$ for all $j$ and $k.$ Then, by construction, we can form a corresponding  $i$-cycle $\sigma'$ of $P$ by setting
\[\sigma'=\sigma+\sum_j a_j \partial w\paren{u_j}\,.\]
 $\sigma$ and $\sigma'$ are homologous in $\T^d,$ so $\alpha_*\circ \xi_*\paren{\brac{\sigma}}=\phi_*\paren{\brac{\sigma'}}$  In particular, if $\xi_*$ is non-trivial then $\phi_*$ is non-trivial as well. Using Equation~\ref{eqn:prop20c}, it follows that
\[\mathbb{P}_{p}\paren{A^{\square}}\geq \mathbb{P}_{p+p(1-p)q^{2i}}\paren{\text{$\xi_*$ is non-trivial}} \to 1\,,\]
 as $N\to \infty.$ Therefore,
\[p^{\square}_c\paren{i,d}\leq p < p^{\square}_c\paren{i,d-1}\,.\]

We can define similar couplings between percolations on $\T^d_N \cap \set{x_j = 0}$ and on $\T^d_N \cap \set{0 \leq x_j \leq 1}$ for $j = 1 \ldots d.$ Combining these couplings with the argument leading to Equation~\ref{eqn:prop20e} yields $q^{\square}_c(i,d) < q^{\square}_c(i,d-1).$ Then from Proposition~\ref{prop:duality} we obtain
\[p^{\square}_c(i,d-1) = 1-q^{\square}_c(d-i-1,d-1) < 1-q^{\square}_c(d-i-1,d) = p^{\square}_c(i+1,d)\,.\]
\end{proof}

\begin{figure}[t]
    \centering
    \includegraphics[scale=.75]{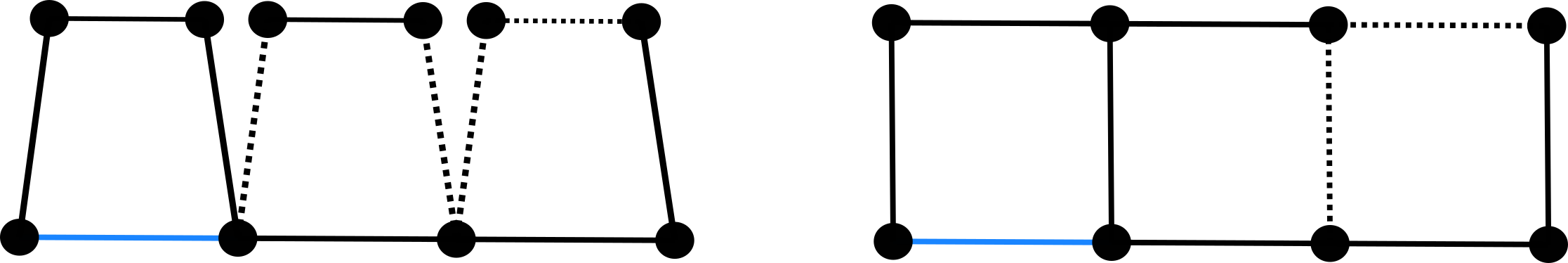}
    \caption{\label{fig:strict} Percolation on $T''$ (left) mapping to percolation on $T'$ (right) for $i=1,d=2.$ The blue edges are in $R \setminus P.$}
\end{figure}

An alternative approach to the proof of Proposition~\ref{prop:monotonicity} is to construct a third space $T''$ by attaching a new $(i+1)$-cube to each $i$-face of $T$ along one of the cube's $i$-faces. We can define inhomogeneous percolation $P''$ on $T''$ by starting with the $(i-1)$-skeleton of $T'',$ adding each $i$-face of $T$ and and each $i$-face parallel to $T$ independently with probability $p,$ and adding the perpendicular $i$-faces independently with probability $q$ (these faces play the same role as the random variables $\kappa\paren{u,v}$ above). Giant cycles in $P''$ are ones that are mapped non-trivially to $H_i\paren{T''}$ by the map on homology induced by the inclusion $P''\hookrightarrow T'',$ and they appear at a lower value of $p$ than $p^{\square}_c\paren{i,d-1}$ (precisely when they appear in $R$ as defined above). The proof is finished by observing that the quotient map $\pi:T''\rightarrow T'$ identifying the corresponding perpendicular faces of neighboring cubes induces an injective map on homology, and therefore the existence of giant cycles in $P''$ implies the existence of giant cycles in $P.$ This idea is illustrated in Figure~\ref{fig:strict}. Note that our definition of giant cycles in $T''$  can be adapted to give a more general notion of homological percolation in the $i$-skeleton of a cubical or simplicial complex whose $i$-dimensional homology is nontrivial.

\begin{proof}[Proof of Theorem~\ref{thm:weak}]
By Equation~\ref{eqn:prop20a}, $q^{\square}_c\paren{i,d},p^{\square}_c\paren{i,d}\in\paren{0,1}.$ The remaining statements follow from Corollary~\ref{cor:sharp} and Propositions~\ref{prop:duality} and~\ref{prop:monotonicity}.
\end{proof}

Note that we could alternatively show that $p^{\square}_c\paren{i,d},q^{\square}_c\paren{i,d}\in\paren{0,1}$ by modifying the first section of the proof of Theorem~\ref{thm:one} to work for the lattice of $i$-plaquettes in $\Z^d$ and using a Peierls-type argument to obtain the bound
\[\frac{1}{2d-i+1} \leq q^{\square}_c\paren{i,d} \leq p^{\square}_c\paren{i,d} \leq 1-\frac{1}{d+i+1}\,.\]

\begin{proof}[Proof of Theorem~\ref{thm:permutohedral}]
Theorem~\ref{thm:permutohedral} follows from the proofs of Theorems~\ref{thm:half},~\ref{thm:one}, and~\ref{thm:weak} with the adjustments for the permutohedral lattice noted throughout the paper. In particular, Lemma~\ref{lemma:duality} and Propositions~\ref{prop:Zdirrep} and~\ref{prop:monotonicity} are replaced by Lemma~\ref{lemma:permutoduality} and  Propositions~\ref{prop:permutoirrep} and\ref{prop:monotonicity_permuto} respectively.
\end{proof}

\section{Permutohedral Site Percolation}\label{sec:permutohedra}
Most of our previous arguments can be applied to the permutohedral lattice with minimal modification. Here we highlight the major differences from each section. 

First, since we are not dealing with cubical complexes, we will need a more general notion.
\begin{Definition}[Definition 2.38 of~\cite{kozlov2008combinatorial}]
A \emph{polyhedral complex} in $\R^n$ is a collection $X$ of convex polytopes in $\R^n$ (of different dimensions) satisfying the following two properties.
\begin{itemize}
    \item  If $\sigma\in X$ and $\tau$ is a face of $\sigma$ then $\tau\in X.$ 
    \item  If $\sigma_1,\sigma_2\in X$ and $\sigma_1\cap\sigma_2\neq \varnothing$ then $\sigma_1\cap\sigma_2$ is a face of $X.$ 
\end{itemize}
\end{Definition}
It turns out to be convenient to treat $X$ as a collection of polytopes rather than as their union. The cubical complexes considered above are a special case of a polyhedral complex all of whose faces are cubes. Another special case is that of simplicial complexes, all of whose cells are simplices (recall that a $k$-simplex is the convex hull of $(k+1)$ points in Euclidean space, assuming the points are in general position). There is also a related notion of an \emph{abstract simplicial complex}, which we require below. Formally, an abstract simplicial complex is a family of sets which is closed under taking subsets (see Chapter 10 of~\cite{edelsbrunner2014short} for an elementary account or Section 9.1 of~\cite{kozlov2008combinatorial} for a more in depth exposition). The combinatorial structure of a simplicial complex can be encoded as an abstract simplicial complex by taking the family consisting of all vertex sets of simplices in the complex.

The homology of a polyhedral complex is defined using a boundary map taking a $k$-dimensional cell to the signed sum of its $(k-1)$-faces. The details of this construction can be found in Chapter 3 of~\cite{kozlov2008combinatorial}; they are analogous to those for the special case of cubical complexes. It is a standard result that homotopy equivalence preserves homology groups, regardless of the type of complex.

Next, the definition of the dual system is simpler for the permutohedral lattice. We define
$$Q^{\bhexagon} = Q^{\bhexagon}\paren{d,N,p} \coloneqq \overline{Q^c},$$
i.e. the polyhedral complex consisting of all permutohedra that are not included in $Q$ and their lower-dimensional faces. As mentioned before, a duality result for this setting was proven in~\cite{bobrowski2020homological}.

\begin{Lemma}[Bobrowski and Skraba]\label{lemma:permutoduality}
For $0 \leq k \leq d,$
$$\mathcal{B}_k\paren{Q} + \mathcal{B}_{d-k}\paren{Q^c} = \rank H_k\paren{\T^d}.$$
\end{Lemma}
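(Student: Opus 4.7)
The plan is to realize $\mathcal{B}_k(Q)$ and $\mathcal{B}_{d-k}(Q^c)$ as the dimensions of a pair of mutually orthogonal complementary subspaces of $H_k(\T^d_N;F)$ under the intersection pairing. Let $f_{*} \colon H_k(Q) \to H_k(\T^d_N)$ and $g_{*} \colon H_{d-k}(Q^c) \to H_{d-k}(\T^d_N)$ denote the inclusion-induced maps, and write $\mathcal{F} = \mathrm{im}\, f_{*}$ and $\mathcal{G} = \mathrm{im}\, g_{*}$. Since $\T^d_N$ is a closed orientable $d$-manifold, Poincar\'e duality yields a perfect pairing $H_k(\T^d_N) \times H_{d-k}(\T^d_N) \to F$. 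It then suffices to show $\mathcal{G} = \mathcal{F}^{\perp}$, as this forces $\dim\mathcal{F} + \dim\mathcal{G} = \rank H_k(\T^d_N)$.

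The inclusion $\mathcal{G} \subseteq \mathcal{F}^{\perp}$ should be verified geometrically: because $Q$ and $Q^c$ are compact PL $d$-manifolds meeting along their common boundary $\partial Q = Q \cap Q^c$, any $(d-k)$-cycle supported in $Q^c$ can be homotoped into the open interior $\T^d_N \setminus Q$ through a collar of $\partial Q$. Such a representative is disjoint from any $k$-cycle supported in $Q$, and so has zero intersection number with every class in $\mathcal{F}$.

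For the reverse inclusion, I would extract $\mathcal{G}$ from the long exact sequence of the pair $(\T^d_N, Q^c)$,
\[
H_{d-k}(Q^c) \to H_{d-k}(\T^d_N) \xrightarrow{q_{*}} H_{d-k}(\T^d_N, Q^c),
\]
which gives $\mathcal{G} = \ker q_{*}$. Excision provides $H_{d-k}(\T^d_N, Q^c) \cong H_{d-k}(Q, \partial Q)$, and Poincar\'e-Lefschetz duality on the oriented $d$-manifold with boundary $Q$ gives $H_{d-k}(Q, \partial Q) \cong H^{k}(Q) \cong H_k(Q)^{*}$ (the last over a field). Combining these with Poincar\'e duality $H_{d-k}(\T^d_N) \cong H_k(\T^d_N)^{*}$ on the ambient torus, the map $q_{*}$ is converted into a map $H_k(\T^d_N)^{*} \to H_k(Q)^{*}$ between dual spaces.

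The crux, and expected main obstacle, is to check that this converted map is precisely the linear dual $f_{*}^{*}$ of the inclusion-induced $f_{*}$. Granting this, $\ker q_{*}$ corresponds under Poincar\'e duality to the annihilator of $\mathrm{im}\, f_{*} = \mathcal{F}$, i.e.\ to $\mathcal{F}^{\perp}$, completing the proof. The required compatibility is the content of the classical Alexander-Lefschetz duality theorem and follows from the naturality of the cap product together with the fact that the fundamental class $[\T^d_N]$ restricts under the excision isomorphism to the relative fundamental class $[Q, \partial Q]$. Verifying this commutativity is purely diagrammatic but does require some care; the hypotheses are satisfied here because $Q$ is a finite union of closed permutohedra and hence a PL $d$-manifold with boundary embedded in $\T^d_N$.
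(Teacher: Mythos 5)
Your outline is the classical Alexander--Lefschetz duality argument, and most of it goes through, but there is one genuine gap: the closing claim that $Q$, being a finite union of closed permutohedra, is ``hence'' a PL $d$-manifold with boundary meeting $\overline{Q^c}$ along $\partial Q$. That implication is false for unions of closed top-dimensional cells in a general tessellation --- in the cubical lattice two closed squares can meet in a single vertex, and their union is not a manifold with boundary --- so every later step that uses the manifold structure of $Q$ (excision to the pair $\paren{Q,\partial Q}$, Poincar\'e--Lefschetz duality $H_{d-k}\paren{Q,\partial Q}\cong H^k\paren{Q}$, the collar of $\partial Q$ used for both inclusions) rests on a special feature of the permutohedral tessellation that you have not verified. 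What makes it salvageable is that the Voronoi tessellation of $A_d^*$ is simple: a $(d-k)$-face lies in exactly $k+1$ cells (the paper uses this via Lemma 3.4 of~\cite{choudhary2019polynomial}), any two intersecting permutohedra share a $(d-1)$-face, and the local picture at a point of a $(d-k)$-face is the normal fan of a $k$-simplex; one must then check that the union of any nonempty proper subset of those $k+1$ cones is homeomorphic to a half-space. That is true but requires an argument, so your proof has a hole precisely at the step you dismiss as routine. (You should also fix whether $Q^c$ means the open complement or $Q^{\bhexagon}=\overline{Q^c}$, and dispose of the trivial cases $Q=\emptyset$ and $Q=\T^d_N$; those are minor.)

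For comparison: the paper does not prove this lemma at all --- it quotes it from Bobrowski and Skraba --- but it proves the plaquette analogue, Lemma~\ref{lemma:duality}, by an argument described as modeled on theirs: a commutative ladder pairing the homology long exact sequence of $\paren{\T^d_N,P_\epsilon}$ with the cohomology long exact sequence of $\paren{\T^d_N,P_\epsilon^c}$, with Lefschetz and Poincar\'e duality as vertical isomorphisms, the five lemma, and a diagram chase giving $H_i\paren{\T^d_N}\cong \im i_*\oplus\im j^*$ (plus $\rank j^*=\rank j_*$ over a field). That packaging carries the same duality content as your kernel/annihilator computation, but it is applied to pairs in the ambient torus with a thickened random set, so it never requires the random subcomplex to be a manifold --- which is exactly why the same scheme also works for plaquettes, where $P$ is not a manifold, and why it sidesteps the gap identified above in the permutohedral case.
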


Since $d$-dimensional permutohedra always meet along $(d-1)$-dimensional faces, it is not difficult to see that $\mathcal{B}_k\paren{Q^c} = \mathcal{B}_k\paren{Q^{\bhexagon}}.$

Sections~\ref{sec:half} and~\ref{sec:duality} need essentially no changes, as does Section~\ref{sec:one} once Theorem~\ref{thm:menshikov} is replaced with the analogous result on site percolation.

We are then left with Sections~\ref{sec:surjective} and~\ref{sec:weak}, both of which require adaptation. 

\subsection{Symmetries of $\mathcal{A}_d^*$}
In order to adapt Section~\ref{sec:surjective} to the permutohedral lattice, we will need an analogue of Proposition~\ref{prop:Zdirrep}. This is because the point symmetry group of $\mathcal{A}_d^{*}$ is not $W_d,$ but instead the symmetric group $\mathcal{S}_{d+1}.$

\begin{Proposition}\label{prop:permutoirrep}
Let $\F$ be a field, $d>0,$ and $1\leq k\leq d-1.$ $H_k({\bf T}^d_N;\F)$  is an irreducible representation of $\mathcal{S}_{d+1}$ if $\mathrm{char}\paren{\F}=0$ or $\mathrm{char}\paren{\F}$ is an odd prime that does not divide $d+1.$
\end{Proposition}

\begin{proof}
First, we review the action of $\mathcal{S}_{d+1}$ on  $\mathcal{A}_d^*.$ The lattice $\mathcal{A}_d^* \subset \F^{d+1}$ has a basis  
\begin{align*}
    B \coloneqq \set{\vec{1}-d\vec{e}_k: 1\leq k \leq d}.
\end{align*}
Now $\mathcal{S}_{d+1}$ acts on $\F^{d+1}$ by permuting the coordinates, and this restricts to an action on  $\mathcal{A}_d^*$ which permutes the elements of $B\cup \set{\vec{1}-d \vec{e}_{d+1}}.$ The $\F$-vector space generated by $\mathcal{A}_d^*$ is called the standard representation of $\mathcal{S}_{d+1}.$ Denote it by $\hat{\F}^{d}.$ Analogously to the square lattice, the actions of $\mathcal{S}_{d+1}$ by lattice symmetries on $H_k\paren{{\bf T}^d_N; \F} \cong \bigwedge^k \hat{\F}^d$ are given by the exterior powers of the standard representation.

The statement follows from more general results in the literature  on the representation theory of the symmetric group. These are usually stated using language that is beyond the scope of this paper. We describe the required results from~\cite{james2006representation} and sketch why they suffice. Please refer to that reference for an explanation of the terminology used in the remainder of this proof.

Let $\lambda$ be a partition of $\set{1,\ldots,d+1}.$ As explained in Section 4 of~\cite{james2006representation}, $\lambda$ corresponds to a representation $S^{\lambda}$ of $\mathcal{S}_{d+1}$ over any field, called a Specht module. In particular, $H_k\paren{{\bf T}^d_N; \F} \cong \bigwedge^k \hat{\F}^d$ is isomorphic to the Specht module corresponding to the partition $\lambda=\paren{d-k+1,1,\ldots,1}$ of $\set{1,\ldots,d+1}.$ Theorem 4.9 of~\cite{james2006representation} implies that if $\mathbb{F}\subset \mathbb{F}'$ and $S^{\lambda}$ is irreducible over $\mathbb{F}$ then it is irreducible over $\mathbb{F}'.$ In particular, it suffices to consider the cases $\F=\Q$ and $\F=\Z_p.$ When $\F=\Q,$ the Specht modules are exactly the irreducible representations of the symmetric group by Theorem 4.12 of~\cite{james2006representation}. This implies the first case of the proposition. The second case follows from Theorem 23.7 in the same reference.

\end{proof}

\subsection{Monotonicity}\label{sec:monotonicity}
Now we obtain analogues of the results in Section~\ref{sec:weak} in the permutohedral setting. The idea of the proof is again to find a copy of $\T^{d-1}$ within $\T^d,$ but the existence of a such a sublattice is less immediately apparent than in the plaquette case. Let $\mathcal{O}$ be a finite collection of permutohedra of $A_d^*$ and let $O=\cup_{\theta \in \mathcal{O}} \theta.$

We describe how the topology of $O$ is encoded in the adjacency graph of $\mathcal{O}.$ Let $G$ be a graph with vertex set $\set{v_1,\ldots,v_n}.$ The \emph{clique complex} or \emph{flag complex} of $G$ is the abstract simplicial complex  $\Delta\paren{G}$ so that $\paren{v_{i_1},\ldots,v_{i_l}}\in \Delta\paren{G}$ if and only $v_{i_j}$ is adjacent to  $v_{i_l}$ for $1\leq j < l \leq k.$ Denote the clique complex of the adjacency graph of $\mathcal{O}$ by $\Delta\paren{O}.$  

\begin{Proposition}\label{prop:nerve}
$O$ is homotopy equivalent to $\Delta\paren{O}.$    
\end{Proposition}
\begin{proof}
We will show that $\Delta\paren{O}$ coincides with another abstract simplicial complex called the nerve of $\mathcal{O}.$ The \emph{nerve} of a finite collection of sets $\mathcal{X}=\set{X_1,\ldots,X_n}$ is the simplicial complex $\mathrm{Nerve}\paren{\mathcal{X}}$ with vertex set $\set{X_1,\ldots,X_n}$  so that 
\[\paren{X_{i_1},\ldots,X_{i_l}}\in \mathrm{Nerve}\paren{\mathcal{X}}\iff \bigcap_{j=1}^l X_{i_{j}}\neq \varnothing\,.\]
The nerve theorem --- a standard result of combinatorial topology (see  Section 3 of~\cite{bauer2023unified} for a relatively elementary proof) --- states that if all nonempty intersections of subsets of $\mathcal{X}$ are convex then  $\mathrm{Nerve}\paren{\mathcal{X}}$ is homotopy equivalent to the union $\cup_{X\in \mathcal {X}} X.$ This applies to $\mathcal{O},$ as the intersection of a collection of permutohedra of $A_{d}^*$ is either empty or is a shared face of the permutohedra. 

It suffices to show that  $\mathrm{Nerve}\paren{\mathcal{X}}=\Delta\paren{O}.$ This is true because the intersection of a collection of $k$ permutohedra of $A_{d}^*$ is non-empty if and only if each pair of permutohedra in the collection shares a $(d-1)$-face. If this holds, the intersection is then $(d-k+1)$-dimensional. In other words, the simplicial complexes $\mathrm{Nerve}\paren{\mathcal{O}}$ and $\Delta\paren{O}$ coincide, and the claim follows. For a proof of this fact, see Section 3 of~\cite{choudhary2019polynomial}.  Note that the corresponding statement for $\Z^d$ is false, as two cubes can meet at a vertex.
\end{proof}

\begin{figure}[t]
    \centering
    \includegraphics[width=.4\textwidth]{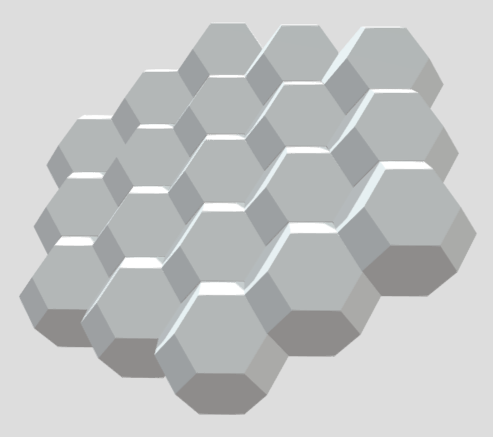}
    \quad
    \includegraphics[width=.4\textwidth]{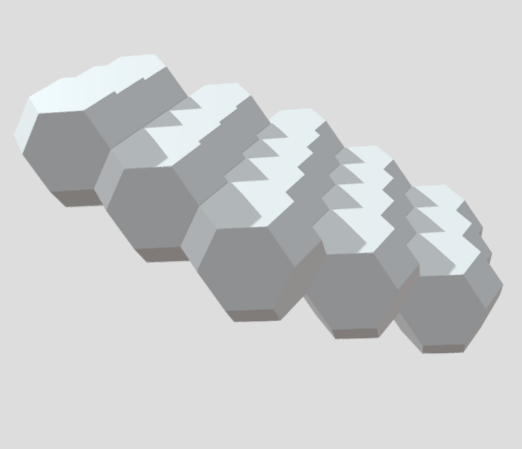}
    \caption{\label{fig:permmonotonicity3} A portion of $S$ in 3 dimensions, shown from two different angles.}
\end{figure}

We now construct a subset of $A_d^*$ with the same pairwise adjacencies  as $A_{d-1}^*.$
Recall that 
$$\hat{\R}^d \coloneqq \set{(x_0,x_1,\ldots,x_d)\in \R^{d+1} : \sum_{k=0}^d x_k = 0}.$$ 

Let
\[\vec{v}_1=\paren{\frac{-d}{d+1},\frac{1}{d+1},\ldots,\frac{1}{d+1}},\ldots,\vec{v}_{d+1}=\paren{\frac{1}{d+1},\ldots,\frac{1}{d+1},\frac{-d}{d+1}}\]
where the vectors have $(d+1)$ coordinates. A $\mathbb{Z}$-basis for $A_{d}^*$ is given by the vectors $\vec{v}_1,\ldots,\vec{v}_d,$ where we note that
\begin{equation}\label{eq:permutobasis}
    \vec{v}_{d+1}=-\paren{\vec{v}_1+\ldots+\vec{v}_{d}}\,.
\end{equation}

We require one more fact about the combinatorics of the permutohedral lattice.

\begin{Lemma}\label{lemma:adjacency}
The permutohedron centered at the origin of $A_d^*$ is adjacent to exactly the permutohedra centered at the points
\begin{equation}
    \label{eq:delaunayvertices}
\mathcal{V}_1=\set{\paren{a_1\vec{v_1}+\ldots+a_{d+1}\vec{v}_{d+1}}: \paren{a_1,\ldots,a_{d+1}\in \set{0,1}^{d+1}\setminus \set{\vec{0},\vec{1}}}}
\end{equation}
and all of these points are distinct. Here $\vec{1}=\paren{1,\ldots,1}.$ 
\end{Lemma}
\begin{proof}
This follows from Theorem 2.5 of~\cite{baek2009some}. Let $\theta$ be the permutohedron centered at the origin of $A_d^*,$ and let $\mathcal{V}_2$ be the set of the points at the centers of permutohedra adjacent to $\theta.$ Recall that the Delaunay triangulation of a set of points in general position is the dual complex of its Voronoi diagram. $A_{d}^*$ is in general position (this is Lemma 3.4 of~\cite{choudhary2019polynomial}), so the permutohedra adjacent to $\theta$ are those centered at points in the $1$-neighborhood of $\vec{0}$ in the Delaunay triangulation. These are exactly the vertices which are contained in top-dimensional cells adjacent to the origin.

Theorem 2.5 of~\cite{baek2009some} states that all top-dimensional Delaunay cells may be obtained from the simplex with vertices
\begin{equation}
    \label{eq:delaunay2}
\vec{0},\vec{v}_1,\vec{v}_1+\vec{v}_2,\ldots,\vec{v}_1+\ldots+\vec{v}_{d}
\end{equation}
by symmetries of $A_{d}^*.$ As such, $\mathcal{V}_2\cup\set{\vec{0}}$ is the set of vertices obtained from \ref{eq:delaunay2} by applying symmetries which map one of the vertices to $\vec{0}.$ 

The symmetries of $A_d^*$ are translations and permutations of the coordinates. Applying permutations yields that $\mathcal{V}_1\subset \mathcal{V}_2.$ We now verify that all vertices in $\mathcal{V}_2$ can be obtained only by a permutation of the coordinates. Consider a translated Delaunay cell containing $\vec{0},$ which without loss of generality is of the form 
\[-\vec{v}_1-\ldots -\vec{v}_k,-\vec{v}_2- \ldots -\vec{v}_k,\ldots,\vec{0},\vec{v}_{k+1},\vec{v}_{k+1}+\vec{v}_{k+2},\ldots,\vec{v}_{k+1}+\ldots+\vec{v}_{d}\,.\]
Then by (\ref{eq:permutobasis}), we can write this as 
\begin{align*}
&\vec{v}_{k+1}+\ldots+\vec{v}_{d} + \vec{v}_{d+1}, \ldots,\vec{v}_{k+1}+\ldots+ \vec{v}_{d+1} + \vec{v}_1 + \ldots+\vec{v}_{k-1},\vec{0},\\
&\vec{v}_{k+1},\vec{v}_{k+1}+\vec{v}_{k+2},\ldots,\vec{v}_{k+1}+\ldots+\vec{v}_{d}\,,
\end{align*}
which is obtained by a permutation. Thus $\mathcal{V}_2\subset \mathcal{V}_1.$ Finally, distinctness of the points in Equation~\ref{eq:delaunayvertices} follows from the fact that the number of $(d-1)$-dimensional faces of the $d$-dimensional permutohedron is $2^{d+1}-2.$
\end{proof}

Let $L^{\perp}$ be the orthogonal complement of the line spanned by $\vec{v}_{d+1}$ in $\hat{R}^d.$ 
Notice that
\[L^{\perp}=\set{\vec{v}\in \hat{\R}^d:\vec{v} \perp \paren{1,\ldots,1,0}, \vec{v} \perp \vec{e}_{d+1}}\]
so we can identify this space with $\hat{\R}^{d-1}.$ Then $A_{d-1}^*$ is spanned by the vectors $\vec{w}_1,\ldots,\vec{w}_{d-1}$ where
\[\vec{w}_1=\paren{\frac{-\paren{d+1}}{d},\frac{1}{d},\ldots,\frac{1}{d},0},\ldots,\vec{w}_{d}=\paren{\frac{1}{d},\ldots,\frac{1}{d},\frac{-\paren{d+1}}{d},0}\]
Let $\pi:\hat{\R}^{d}\to \hat{\R}^{d-1}$ be the orthogonal projection, and let $S$ be the sublattice of $A_{d}^*$ generated by  $\vec{v}_1,\ldots,\vec{v}_{d-1}.$ That is, $S$ is the intersection of $A_{d}^*$ with the hyperplane $x_d=x_{d+1}.$  
\begin{Lemma}
$\pi$ maps $S$ isometrically to $A_{d-1}^*.$ Moreover, the $d$-dimensional permutohedra centered at $\vec{u}_1,\vec{u}_2\in S$ are adjacent if and only if the $(d-1)$-dimensional permutohedra centered at $\pi\paren{\vec{u}_1},\pi\paren{\vec{u}_2}$ are.   
\end{Lemma}
\begin{proof}
We have that
\[\vec{v}_i\cdot \vec{v}_j=\begin{cases} \frac{d}{d+1} & i=j \\ -\frac{1}{d+1} & i\neq j \end{cases}\]
so 
\begin{align*}
\pi\paren{\vec{v}_{1}}
&=\vec{v}_1 - \mathrm{proj}_{\vec{v}_{d+1}} \vec{v}_1\\ 
&= \vec{v}_1 - \frac{\vec{v}_1 \cdot \vec{v}_{d+1}}{\vec{v}_{d+1} \cdot \vec{v}_{d+1}} \vec{v}_{d+1}\\
&= \vec{v}_1 + \frac{1}{d} \vec{v}_{d+1}\\
&=\paren{-\frac{d-1}{d},\frac{1}{d},\ldots,\frac{1}{d},0}\\
&= \vec{w}_1\,,
\end{align*}
and a similar computation shows that  $\pi\paren{\vec{v}_i}=\vec{w}_i$ for $i=1,\ldots, d.$ This implies the first claim.  

The second claim follows from Lemma~\ref{lemma:adjacency}, applied to both $\vec{u}_1,\vec{u}_2\in S$ and $\pi\paren{\vec{u}_1},\pi\paren{\vec{u}_2}\in A_{d-1}^*.$  
\end{proof}

\begin{Proposition}\label{prop:permuto_sublattice}
Let $\theta^d$ be the permutohedron centered at the origin of $A_d^{*},$ and let $F:A_{d-1}^*\to S$ be the inverse of $\pi\mid_{S}.$ 
For any subset $E \subset \mathcal{A}_{d-1}^*,$
\[\bigcup_{\vec{w} \in E} \paren{\theta^{d-1}+\vec{w}} \simeq \bigcup_{\vec{w} \in E}\paren{\theta^d+F\paren{\vec{w}}}\,,\]
where $\simeq$ denotes homotopy equivalence.
\end{Proposition}

\begin{proof}
By the previous lemma, the two sets of permutohedra have the same adjacency graph. Thus, it follows from Proposition~\ref{prop:nerve} that their unions are homotopy equivalent.
\end{proof}
We now show that we can ``replace'' a permuotohedron of $S$ with a certain set of permutohedra outside of $S$ in a way that preserves giant cycles. Let $\theta$ be a permutohedron centered at a point of $S,$ which we assume without loss of generality is the origin. We partition the neighbors of $\theta$ into three disjoint sets. Let
\[U\paren{\theta} = \bigcup_{\paren{a_1,\ldots,a_{d-1}}\in \set{0,1}^d} \paren{\theta + a_1\vec{v}_1+\ldots + a_{d-1}\vec{v}_{d-1} + \vec{v}_{d+1}}\,\]
be the neighbors of $\theta$ contained in the translate $S + \vec{v}_{d+1}$, let 
\[V\paren{\theta} = \bigcup_{\paren{a_1,\ldots,a_{d}}\in \set{0,1}^d\setminus \set{\vec{0},\vec{1}}} \paren{\theta + a_1\vec{v}_1+\ldots + a_{d-1}\vec{v}_{d-1}+a_d\vec{v}_d+a_d\vec{v}_{d+1}}\]
be those neighbors which are contained in $S,$ and let
\[W\paren{\theta} = \bigcup_{\paren{a_1,\ldots,a_{d-1}}\in \set{0,1}^d} \paren{\theta + a_1\vec{v}_1+\ldots + a_{d-1}\vec{v}_{d-1} + \vec{v}_{d}}\]
be the neighbors contained in $S-\vec{v}_{d+1}.$ Observe that these three sets consist of the neighbors of $\theta$ centered at points satisfying $x_{d+1}>x_d,$ $x_{d+1}=x_d,$ and $x_{d+1}<x_d,$ respectively. 

Let $X_1\coloneqq \partial\theta \cap U\paren{\theta}$ and $X_2 \coloneqq \partial\theta \cap W\paren{\theta}.$ We now show that faces of $\theta$ are either contained in $X_1$ or $X_2$ or have a subface that is.

\begin{Lemma}\label{lemma:freeface}
    Let $1\leq i\leq d-1$ and let $\tau$ be an $i$-dimension face of $\tau$ that is neither contained in $X_1$ nor $X_2.$ Then $\tau$ is orthogonal to the hyperplane $x_d=x_{d+1}.$ In addition, it has an $(i-1)$-dimensional face contained in $X_2.$ 
\end{Lemma}

\begin{proof}
We begin by noting two preliminary facts regarding a general face $\sigma$ of $\theta.$ Suppose that $\sigma$ is the intersection of permutohedra centered at points $p_1,\ldots,p_k$ of $A_d^*.$ Fact 1: if not all of the points $p_1,\ldots,p_k$ are contained in $S,$ then $\sigma$ is a face of $X_1$ or $X_2.$ This is true because $\set{U\paren{\theta},V\paren{\theta},W\paren{\theta}}$ is a partition of the neighbors of $\theta.$ Fact 2: as the permutohedra are Voronoi cells, $\sigma$ is contained in the intersection of the hyperplane bisectors for each pair of points in $\set{p_1,\ldots,p_k}.$ In particular, if $\set{p_1,\ldots,p_k}\subset S$ then each of these bisectors is orthogonal to the hyperplane $x_{d}=x_{d+1}$ and $\sigma$ is orthogonal to that hyperplane, as well.   

Now suppose that $\tau$ is the intersection of the permutohedra $\theta_1=\theta,\ldots,\theta_{d-i+1}$ centered at points $p_1,\ldots,p_{d-i+1}$ of $A_d^*.$ By Fact 1 above, $p_1,\ldots,p_{d-i+1}$ are in $S.$ It then follows from Fact 2 that $\tau$ is orthogonal to the hyperplane $x_d=x_{d+1}.$ As $\tau$ is bounded, it contains a face $\tau'$ that is not orthogonal to $x_d=x_{d+1}.$ A second application of Fact 2 yields that $\tau'$ is the intersection of permutohedra, at least one of which is not contained in $S.$ By Fact 1, $\tau'$ is a face of either $X_1$ or $X_2.$
\end{proof}

In the next two lemmas, we use a standard construction from combinatorial algebraic topology: if $A$ is a polyhedral complex, $\sigma$ is a cell of $A$ not contained in any higher-dimensional cells, $\tau\subset \sigma$ is a face of $\sigma$ with $\mathrm{dim}\tau=\mathrm{dim}\sigma-1,$ and $\sigma$ is the only higher-dimensional neighbor of $\tau$, then $A$ deformation retracts to the polyhedral complex $A\setminus \set{\sigma,\tau} .$ In this context, $\tau$ is called a \emph{free face} and the deformation retraction is called an \emph{elementary collapse}. This notion is usually defined for simplicial complexes (see Definition 6.13 of~\cite{kozlov2008combinatorial}) but is also valid for more general polyhedral complexes (an even more general notion is given in Definition 11.12 of~\cite{kozlov2008combinatorial}). We now consider the topological structure of $X_1$ and $X_2.$

\begin{Lemma}\label{lemma:x1ball}
    $X_1$ and $X_2$ are homeomorphic to $(d-1)$-dimensional balls. 
\end{Lemma}

\begin{proof}
$X_1$ and $X_2$ are symmetric so we may consider $X_2$ alone. We will show that $\theta \cap \set{x_d\geq x_{d+1}}$ deformation retracts to $X_2.$ This will suffice, as the former set is a $(d-1)$-dimensional ball (it is the bisection of the boundary of a convex polyhedron by a hyperplane).

As a preliminary step, we note that $X_1$ is contained in $\set{x_d< x_{d+1}}.$ First, one can verify directly from Lemma~\ref{lemma:adjacency} that if $\theta_1\in U\paren{\theta}$ and $\theta_2\in W\paren{\theta}$ then $\theta_1$ and $\theta_2$ are not adjacent and $\theta_1\cap\theta_2=\varnothing.$ Thus $X_1\cap X_2 = \varnothing.$ This implies that $X_1$ does not intersect $\set{x_d=x_{d+1}},$ because $X_1$ and $X_2$ are symmetric about that hyperplane.

Let $\theta'$ be the polyhedron obtained from $\theta$ by subdividing each $i$-dimensional cell $\sigma$ that intersects the plane $x_d=x_{d+1}$ into two $i$-dimensional cells $\sigma^+=\sigma\cap \set{x_d\geq x_{d+1}}$ and $\sigma^-=\sigma\cap \set{x_d\leq x_{d+1}},$ and adding a new $(i-1)$-dimensional cell $\sigma^0=\sigma\cap \set{x_d=x_{d+1}}.$ It is easily checked that this produces a well-defined polyhedron (note that all faces of $\theta$ that intersect this hyperplane are orthogonal to it). We deform $Y=\theta'\cap  \set{x_d\geq x_{d+1}}$ to $X_2$ by elementary collapses, in order by decreasing dimension. Suppose that we have removed all $(i+1)$-dimensional faces of $Y\setminus X_2,$ and that $\tau$ is an $i$-dimensional face of $Y \setminus X_2.$ Since $X_1 \subset \set{x_d < x_{d+1}},$ it follows from Lemma~\ref{lemma:freeface} that $\tau$ must be orthogonal to $x_d\geq x_{d+1}.$ Then, by construction,  $\tau$ is one of the faces $\sigma^-$ described above, and it has a free face $\sigma^0.$ Thus, we may remove $\sigma^-$ and $\sigma^0$ by an elementary collapse.      
\end{proof}

Now we are ready to construct a deformation retraction that allows us to replace $\theta$ with $U\paren{\theta}.$ 

\begin{Lemma}\label{lemma:permutocollapse}
    Let $Q=\bigcup_{\vec{w} \in E}\paren{\theta^d+F\paren{\vec{w}}}$ for some $E \subset S.$ There is a deformation retraction from $Q \cup \theta \cup U\paren{\theta}$ to $Q \cup U\paren{\theta} \setminus \theta.$
\end{Lemma}

\begin{proof} 
Let $\theta' \in V\paren{\theta}$ and let $p= a_1\vec{v}_1+\ldots + a_{d-1}\vec{v}_{d-1}+a_d\vec{v}_d+a_d\vec{v}_{d+1}$ be the point at its center, for $\paren{a_1,\ldots,a_{d}}\in \set{0,1}^d\setminus \set{\vec{0},\vec{1}}.$ If $a_d=0$ then, by Lemma~\ref{lemma:adjacency}, both $\theta'$ and $\theta$ are adjacent to the permutohedron of $W\paren{\theta}$ centered at $p+\vec{v}_d.$ Otherwise, they both neighbor the permutohedron of $W\paren{\theta}$ centered at $p-\vec{v}_{d-1}.$  It follows that every $(d-1)$-face in $\theta \cap V\paren{\theta}$ contains a $(d-2)$-face in $X_2.$ 

Let $f$ be a $(d-1)$-dimensional face of $X_2.$ Then $f$ is a face of two permutohedra of $A_d^*$: $\theta$ and another permutohedron contained in $W\paren{\theta}.$ Since $Q \cup \theta$ and $W\paren{\theta}$ share no $d$-cells, $Q \cup \theta$ has a free face $f$ and thus deformation retracts to  $Q\setminus \theta.$ Then since $X_2$ is homeomorphic to a $(d-1)$-dimensional ball by Lemma~\ref{lemma:x1ball},  $\partial \theta \setminus f$ deformation retracts to $\partial \theta \setminus X_2.$ 

We now iteratively collapse the faces of $Y=\theta \setminus \paren{Q \cup U\paren{\theta}},$ from dimension $k=d-1$ down to dimension $k=1.$ Assume that we have already removed all faces of $Y$ of dimension at least $k+1$ and let $\tau$ be a $k$-face of the partially collapsed complex. By Lemma~\ref{lemma:freeface}, $\tau$ has a $(k-1)$-subface $\tau'$ contained in $X_2.$ Then $\tau'=\tau\cap \theta'$ where $\theta'$ is a permutohedron on $W\paren{\theta}.$ As such, all $k$-faces containing $\tau'$ --- with the exception of $\tau$ --- are faces of $\theta'$ and therefore are faces of $X_2.$ It follows that $\tau'$ is a free face of $\tau.$ Therefore, we may remove $\tau$ and $\tau'$ via an elementary collapse. Once all of the collapses are made, we are then left with $Q \cup U\paren{\theta} \setminus \theta$ as desired.

\end{proof}

We are now ready to prove strict monotonicity.

\begin{Proposition}
$p^{\hexagon}_i\paren{d} < p^{\hexagon}_i\paren{d-1}< p^{\hexagon}_{i+1}\paren{d}$
\label{prop:monotonicity_permuto}
\end{Proposition}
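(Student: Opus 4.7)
The plan mirrors the cubical case (Proposition~\ref{prop:monotonicity}). I would first establish the non-strict inequality $p^{\hexagon}_i\paren{d} \leq p^{\hexagon}_i\paren{d-1}$ by embedding a $(d-1)$-dimensional permutohedral torus into $\bf{T}^d_N$ as a combinatorial ``slab,'' upgrade to strict by a coupling that exploits the stacking structure of Lemma~\ref{lemma:stackedperms}, and then obtain $p^{\hexagon}_i\paren{d-1} < p^{\hexagon}_{i+1}\paren{d}$ via the permutohedral analogue of Proposition~\ref{prop:duality} (the duality $p^{\hexagon}_i\paren{d} + q^{\hexagon}_{d-i}\paren{d} = 1$, which holds by the same proof) applied to a parallel $q$-monotonicity.

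For the non-strict inequality, fix a direction $v$ as in Lemma~\ref{lemma:stackedperms} and let $L \subset \mathcal{A}_d$ be a $(d-1)$-dimensional sublattice of permutohedron centers orthogonal to $v$; set $T \subset \bf{T}^d_N$ to be the union of the $d$-permutohedra centered on the image of $L$ in the torus. This slab deformation retracts onto its central $(d-1)$-subtorus, which is generated by $d-1$ linearly independent lattice directions and hence includes injectively into $\bf{T}^d_N$ on all $H_i$. The adjacency graph of $d$-permutohedra in $T$ (via shared $(d-1)$-faces within the slab) is isomorphic to that of a $(d-1)$-permutohedral tiling of a torus $\bf{T}^{d-1}_M$, so by the nerve theorem $Q \cap T$ is homotopy equivalent to a realization of permutohedral site percolation on $\bf{T}^{d-1}_M$ at parameter $p$. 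Giant $i$-cycles in the restricted percolation transfer via the injection to giant $i$-cycles in $Q$, yielding $\mathbb{P}_p\paren{A^{\hexagon}_i\paren{d}} \geq \mathbb{P}_p\paren{A^{\hexagon}_i\paren{d-1}}$ and hence the non-strict inequality.

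To sharpen to strict inequality, I would define a coupling between permutohedral site percolation on $T$ at an elevated parameter $p' > p$ and site percolation on a thickened neighborhood of $T$ at parameter $p$. The key input is the triple $(\sigma, \sigma_1, \sigma_2)$ of Lemma~\ref{lemma:stackedperms}: for a permutohedron $\sigma \in T$, the neighboring permutohedra $\sigma_1, \sigma_2$ lying outside $T$ (sharing the $(d-2)$-face of $\sigma$ that Lemma~\ref{lemma:stackedperms} distinguishes) serve as ``helpers'' whose joint inclusion in $Q$ triggers artificial inclusion of an extra permutohedron of $T$ in the coupled percolation. Calibrating $q$ appropriately so that the marginal distribution on permutohedra of $\bf{T}^d_N$ remains i.i.d.\ Bernoulli$\paren{p}$ while the induced distribution on $T$ dominates Bernoulli$\paren{p'}$, and choosing $p$ with $p < p^{\hexagon}_i\paren{d-1} < p'$, forces giant $i$-cycles in $T$ and hence in $Q$, giving $p^{\hexagon}_i\paren{d} < p^{\hexagon}_i\paren{d-1}$. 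Running the parallel coupling for several choices of $v$ and combining the surjectivity events via Harris's inequality (as in the cubical case) yields $q^{\hexagon}_i\paren{d} < q^{\hexagon}_i\paren{d-1}$, and duality then yields the second inequality $p^{\hexagon}_i\paren{d-1} < p^{\hexagon}_{i+1}\paren{d}$.

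The main obstacle is executing the coupling cleanly. In the cubical case each $i$-face in the slice has a unique canonical perpendicular $(i+1)$-face serving as its helper, so the coupling falls out of the product structure almost automatically. In the permutohedral case the combinatorics of the neighborhood of a slab are more intricate: a given helper permutohedron may be adjacent to several slab permutohedra, and several slab permutohedra may share the triples distinguished by Lemma~\ref{lemma:stackedperms}. Exhibiting a consistent assignment of helper data to target slab permutohedra (perhaps by averaging over the $S_{d+1}$-action or by explicitly choosing a matching from the stacking pattern) that preserves the i.i.d.\ Bernoulli$\paren{p}$ marginals while producing a strict dominance on $T$ is where the real work lies. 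The corresponding combinatorial verification for the nerve-theorem identification of $Q\cap T$ with $(d-1)$-permutohedral site percolation, although ultimately routine, will also require a careful check that higher intersections of slab permutohedra match the higher intersections of $(d-1)$-permutohedra in the lower-dimensional tiling.
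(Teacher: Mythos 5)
There is a genuine gap at the very first step, and it is exactly the difficulty the paper's proof is designed to circumvent. Your slab $T$ --- the union of the $d$-permutohedra whose centers lie on a $(d-1)$-dimensional sublattice orthogonal to $v$ --- does not have the properties you assign to it. The shortest vectors of that orthogonal sublattice are of the form $\vec{e}_j-\vec{e}_k$, and these are not facet-adjacency (Voronoi-relevant) vectors of the permutohedral tiling: the centers of two permutohedra sharing a $(d-1)$-face differ by a vector with exactly two distinct coordinate values, which $\vec{e}_j-\vec{e}_k$ does not have for $d\geq 2$. Concretely, for $d=3$ the centers form the BCC lattice and $w$ points toward a hexagonal facet of the truncated octahedron; the sublattice orthogonal to $w$ is a planar triangular lattice whose nearest-neighbor spacing ($\sqrt{2}\,c$ in terms of the conventional cube edge $c$) exceeds twice the circumradius $\sqrt{5}\,c/4$ of the cell, so the cells centered on it are pairwise disjoint. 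Hence $T$ does not deformation retract onto a $(d-1)$-subtorus, its facet-adjacency graph is empty rather than isomorphic to the $(d-1)$-permutohedral adjacency graph, and $Q\cap T$ is not a copy of $(d-1)$-dimensional permutohedral site percolation. This is precisely what the paper means when it says there is no obvious isometric embedding of the $(d-1)$-dimensional permutohedral lattice into the $d$-dimensional one.

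The paper's construction replaces your flat slab by a staircase. It takes $S$ to be the union of all permutohedra meeting those centered on the orthogonal sublattice, groups cells into ``piles'' (orbits under translation by $2w$), and selects the upper envelope $S'$, one cell per pile. Lemma~\ref{lemma:stackedperms}, together with the general-position property of the lattice (so that $k$-wise intersections of cells are controlled), is then used to show that the nerve of $S'$ --- and of every subfamily of its cells --- agrees with the nerve of the $(d-1)$-dimensional permutohedral tiling; only then is $Q\cap S'$ genuinely the lower-dimensional model and the non-strict inequality follows. Your remaining steps do match the paper in spirit: the strict inequality is obtained there by a coupling in which the helpers for $\theta\in S'$ are the adjacent permutohedra outside $S'$ (with the overlap between helpers handled as in the plaquette case, Proposition~\ref{prop:monotonicity}), and the second inequality follows from the dual $q$-monotonicity via the permutohedral analogue of Proposition~\ref{prop:duality}. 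But as written these rest on the flawed identification of $T$; with $S'$ in place of $T$ your outline goes through essentially as you describe.
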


\begin{proof}
Let $\mathbb{S}$ and $\mathbb{S}'$ be the sets of permuotohedra centered at points of $S$ and $S\cup\paren {S+\vec{v}_{d+1}},$ respectively.

The inequalities $p^{\hexagon}_i\paren{d} \leq p^{\hexagon}_i\paren{d-1} \leq p^{\hexagon}_{i+1}\paren{d}$ follow from Proposition~\ref{prop:permuto_sublattice}, via an argument identical that that for  Proposition~\ref{prop:monotonicity}. The proofs of the strict inequalities are also similar to the case of plaquette percolation. Here, $\mathbb{S}$ and $\mathbb{S}'$ play the roles of $T$ and $T',$ respectively.

By Lemma~\ref{lemma:permutocollapse}, $Q \cup \theta \cup U\paren{\theta}$ deformation retracts to $Q \cup U\paren{\theta} \setminus \theta,$ so if there is a giant cycle in $Q \cup \theta,$ there is also a giant cycle in $Q \cup U\paren{\theta}.$ The rest of the argument is nearly identical to that for Proposition~\ref{prop:monotonicity}: site percolation with probability $p$ on $\mathbb{S}'$ can be coupled with site percolation on $\mathbb{S}$ with a slightly higher parameter by sometimes including $\theta$ when all permutohedra in $U\paren{\theta}$ are present. There is overlap between the sets $U\paren{\theta},$ but this is dealt with in a similar manner as in the plaquette construction. This argument yields the strict inequality  $p^{\hexagon}_i\paren{d} < p^{\hexagon}_i\paren{d-1}.$ We also obtain the strict inequality $p^{\hexagon}_i\paren{d-1} < p^{\hexagon}_{i+1}\paren{d}$ by the same duality argument as before.

\end{proof}

\section{Future directions}

It seems that not much is known about percolation with higher-dimensional cells or homological analogues of bond or site percolation.

\begin{itemize}

\item Do $\lambda^{\square}\paren{N,i,d}$ and $\lambda_i^{\hexagon}\paren{N,d}$ converge as $N\to \infty$ for all $i,d?$

\item Can the minor restrictions on the characteristic of the coefficient field of the homology be removed?

\item Are there scaling limits for plaquette percolation? For bond percolation in the plane at criticality, conjecturally we get SLE. This could be a reasonable question to approach experimentally. 

\item Is there a limiting distribution for $\rank\phi_*$ at criticality, as $N \to \infty$? When $d=2i$, our results imply that the distribution is symmetric and the expectation satisfies $\mathbb{E}[ \rank \phi_*] = \binom{d}{d/2}/2$, but at the moment we do not know anything else.

\item  One of the most interesting possibilities we can imagine would be a generalization of the Harris--Kesten theorem when $d=2i$, on the whole lattice $\Z^d$ rather than on the torus $\T^d_N$. One possibility might be to compactify $\R^d$ to a torus $T^d$. In various proofs of the Harris--Kesten theorem, a key step is to go from crossing squares to crossing long, skinny rectangles---see, for example, Chapter 3 of~\cite{BR06}. One difficulty is that we do not currently have a high-dimensional version of the Russo--Seymour--Welsh method, passing from homological ``crossings'' of high-dimensional cubes to long, skinny boxes.
\end{itemize}

\subsubsection*{Acknowledgements}
We thank Omer Bobrowski, Russell Lyons, Primoz Skraba, and the anonymous referee for comments on an earlier version 
of the manuscript. Also, M.K.\ thanks Christopher Hoffman for many helpful conversations.

\bibliographystyle{alpha} 
\bibliography{bibliography}

\end{document}